\newcommand{\grad}{\nabla}
\newcommand{\laplace}{\Delta}
\newcommand{\Nu}{\mbox{\it Nu}}
\renewcommand{\Re}{\mbox{\it Re}\,}
\newcommand{\Ra}{\mbox{\it Ra}}
\newcommand{\del}{\partial}
\renewcommand{\div}{\grad\cdot}
\newcommand{\N}{\mathbbm{N}}
\newcommand{\F}{{\mathcal F}}
\newcommand{\R}{\mathbbm{R}}
\newcommand{\Z}{\mathbbm{Z}}
\newcommand{\dz}{\partial_z}
\newcommand{\la}{\langle}
\newcommand{\ra}{\rangle}
\newcommand{\sign}{\mbox{\rm sign} }
\newtheorem{prop}{Proposition}
\newtheorem{theorem}{Theorem}
\newtheorem{lemma}{Lemma}
\newcommand{\tacka}{\, \cdot\,}
\begin{document}

\title{Laminar boundary layers in convective heat transport}

\author{Christian Seis\footnote{Department of Mathematics, University of Toronto, 40 St.\ George Street, M5S 2E4, Toronto, Ontario, Canada}}

\maketitle

\abstract{We study Rayleigh--B\'enard convection in the high-Rayleigh-number and high-Prandtl-number regime, i.e., we consider a fluid in a container that is exposed to strong heating of the bottom and cooling of the top plate in the absence of inertia effects. While the dynamics in the bulk are characterized by a chaotic convective heat flow, the boundary layers at the horizontal container plates are essentially conducting and thus the fluid is motionless. Consequently, the average temperature exhibits a linear profile in the boundary layers.

\medskip

In this article, we rigorously investigate the average temperature and oscillations in the boundary layer via local bounds on the temperature field. Moreover, we deduce that the temperature profile is indeed essentially linear close to the horizontal container plates. Our results are uniform in the system parameters (e.g.\ the Rayleigh number) up to logarithmic correction terms. An important tool in our analysis is a new Hardy-type estimate for the convecting velocity field, which can be used to control the fluid motion in the layer. The bounds on the temperature field are derived with the help of local maximal regularity estimates for convection-diffusion equations.}

\section{Introduction}
\subsection{Motivation}
Rayleigh--B\'enard convection is the flow of an incompressible Newtonian fluid in a container with an imposed temperature gradient due to heating of the bottom and cooling of the top plate. It is one of the classical models of fluid dynamics. With its numerous applications in geophysics, oceanography, meteorology, astrophysics and engineering, Rayleigh--B\'enard convection played a central role in experimental and theoretical physics since the turn of the last century. Depending on the heating rate, experimentalists and numerical analysts observe a wide range of flow pattern: from purely conducting (i.e., motionless) states, over steady and oscillatory fluid motions, to chaotic pattern and fully developed turbulence. In view of the richness of the observed phenomena, it is not surprising that over many years, Rayleigh--B\'enard convection has become a paradigm for nonlinear dynamics, including instabilities, bifurcations and pattern formation, e.g.\ \cite{Chandrasekhar, DrazinReid04, Getling98}. We refer to \cite{Siggia94, Kadanoff, BodenschatzPeschAhlers, AhlersGrossmannLohse09} for reviews and further references. 

\medskip

We consider Rayleigh--B\'enard convection in the so-called high-Rayleigh-number and high-Prandtl-number regimes. This means, the applied temperature forcing is so strong that the fluid motion is unorganized. Moreover, inertia effects are negligible, so that, in particular, the observed flow pattern is rather {\em chaotic} than turbulent. Thinking of a container that is infinitely extended in horizontal directions, boundary effects on the vertical sidewalls of the container become irrelevant. In this situation, the flow pattern shows a clear spatial separation of the relevant heat transfer mechanisms: thin laminar boundary layers in the vicinity of the horizontal plates, in which heat is essentially conducted and a large bulk that is characterized by a convective heat flow. Since near the rigid walls of the container, the fluid is almost at rest, the relevant heat transport mechanism in the boundary layers is conduction, and thus, the temperature profile is essentially linear. Fluctuations around this profile take place on large length scales, with a small amplitude. It is in these boundary layers where the majority of the temperature drop between the hot bottom and the cold top boundaries happens. In the bulk, far from the rigid container walls, convection is only limited by viscous friction. The temperature is essentially equilibrated around the mean temperature of the system. However, due to the strong temperature forcing, overload heat is produced on the bottom, which generates instabilities of the boundary layer: The boundary layer bursts and hot fluid parcels with warm tails, so-called plumes, detach and flow into the much colder bulk due to buoyancy forces. While rising, the plumes push aside the above fluid and are itself in turn deflected. In this way, the plumes take the forms of stalks with caps on their tops --- the plumes obtain their characteristic mushroom-like shapes, cf.\ \cite{ZocchiMosesLibchaber90, Kadanoff}. (Of course, the corresponding effects can also be observed on the cold top plate. For convenience, we restrict our considerations to the bottom boundary.)

\medskip

The rigorous understanding of the dynamic flow pattern in the chaotic regime is far from being satisfactory. Until today, most of the work is devoted to the scaling of the Nusselt number, that is, the scaling of the average upward heat flux as a function of the imposed temperature forcing \cite{DC96, CD99, DC01, dor06, Wang08, OS11}. In this paper, we focus on the boundary layers at the horizontal bottom and top plates of the container. So far, boundary layer theories in Rayleigh--B\'enard convection were primarily derived as intermediate steps towards the understanding of the Nusselt number scaling. Since due to the absence of fluid motion, the boundary layers offer the main resistance for the heat flow through the container, the Nusselt number is dominantly determined by the boundary layer, and thus, for scaling theories, the width of these boundary layer is of particular interest. (In fact, the Nusselt number is inversely proportional to the width of the layer.) However, because of their extreme thinness, experimental and numerical investigations  of the boundary layers are very difficult, and still, no generally accepted boundary layer theory is available for Rayleigh--B\'enard convection. An overview on the current state of experimental and theoretical research for more general (than the one considered in the present paper) Rayleigh--B\'enard experiments can be found in \cite[Sect.\ VI]{AhlersGrossmannLohse09}.

\medskip

The present work is a first attempt to partially characterize the observed pattern in chaotic high-Rayleigh-number convection --- at least in the phenomenologically most regular region. Our aim is a rigorous justification of the laminar profile and thus the dominant role of conduction in the boundary layer. We rigorously establish local bounds on the temperature field and its gradients in the boundary layers that are (up to logarithmic corrections) uniform in the system parameters (Theorem \ref{AT1}). These bounds indicate that the temperature field close to the horizontal plates is indeed essentially laminar and fluctuations only happen on relatively large length scales, with a weak dependence on the heating rate. In other words, our analysis proves that heat is transported essentially via conduction. Moreover, we can deduce that the average temperature decays linearly and with slope equal to the Nusselt number (Theorem \ref{AT1bis}), as it is expected in conducting boundary layers.

%We like to point out that, in contrast to the situation, for instance, in dissipative evolution equations, like gradient flows, where a variational structure eventually regulates the typical (flow) pattern, e.g.\ \cite{BOS}, rigorous quantitative statements on characteristic length scales in the chaotic dynamics in Rayleigh--B\'enard systems are challenging.

\medskip

The remainder of the paper is organized as follows: In Subsection \ref{S:Model}, we introduce the mathematical model and the Nusselt number; in Subsection \ref{S:Results}, we present our main results and discuss the method of this paper. Section \ref{C3S4} is devoted to the analysis of the velocity field. Finally, Section \ref{S:Proof} contains the proofs of Theorem \ref{AT1} and \ref{AT1bis}.

\subsection{Model and Nusselt number}\label{S:Model}

Despite the complexity of the observed phenomena, the mathematical model for Rayleigh--B\'enard convection is relatively simple. If density variations of the fluid are sufficiently small and the thermal diffusivity is negligible compared to kinematic viscosity, the problem can be modelled by the infinite-Prandtl-number limit of the Boussinesq equations:
\begin{eqnarray}
\partial_t T + u\cdot \grad T - \laplace T &=& 0,\label{Aeq1}\\
\div u&=& 0,\label{Aeq2}\\
-\laplace u + \grad p&=& Te.\label{Aeq3}
\end{eqnarray}
Here, $T$ is temperature, $u$ the fluid velocity, $p$ the hydrodynamic pressure, and $e$ the upward unit vector. We suppose that the container has the simple form $[0,\Lambda)^{d-1}\times[0,H]$, where we refer to the first $d-1$ coordinates as the horizontal ones, and to the last coordinate as the vertical one. We complete the system with periodic boundary conditions in the horizontal directions; at the rigid top and bottom plates, we set
\begin{equation}\label{eq:9}
T = \left\{\begin{array}{l} 1\mbox{ for } z=0\\0\mbox{ for }z=H\end{array}\right\}\quad\mbox{and}\quad u=0 \mbox{ for }z\in\{0,H\}.
\end{equation}
Thus, we suppose uniform heating/cooling of the bottom/top plate, and no-slip boundary conditions for the fluid velocity. We follow the convention to write the space coordinate and the velocity field as $x=(y,z)\in\R^{d-1}\times\R$ and $u=(v,w)\in\R^{d-1}\times\R$, respectively.

\medskip

The system is nondimensionalized and admits a single control parameter: the container height $H$. The side length $\Lambda$ of the period cell is chosen arbitrary and has no significance in the subsequent investigation. We distinguish two regimes: 1) the {\em linear regime} for small container heights, $H\ll1$. Here, heat transfer between bottom and top plate is exclusively due to conduction and the temperature field stabilizes in a linear profile. 2) the {\em chaotic regime} for large container heights, $H\gg1$. In this regime, the fluid experiences a strong temperature forcing, which leads to the formation of chaotic flow pattern, as described in the previous subsection. Notice that the present nondimensionalization differs from the common one in the physics literature, where the control parameter that measures the applied temperature forcing is the Rayleigh number $\Ra$. Both, $H$ and $\Ra$ are related via $H=\Ra^{1/3}$, which explains the term ``High-Rayleigh-number convection'' for the chaotic regime. For the sake of completeness, we like to mention that between these two regimes, there is a critical container height $H_*\sim1$ for which the linear profile becomes unstable and a steady circular fluid (convection rolls) flow sets in. These convection rolls become itself unstable for larger heights and a cascade of bifurcations can be observed, until the chaotic behavior sets in for $H\gg1$. 

\medskip

The efficiency of the heat transport is measured by the Nusselt number $\Nu$, which is defined as the average upward heat flux,
\[
\Nu\;:=\; \frac1H\int_0^H\la wT - \partial_z T\ra\, dz\;\stackrel{\eqref{eq:9}}{=}\; \frac1H\int_0^H \la wT\ra\, dz + \frac1H.
\]
Here, the brackets $\la\tacka\ra$ denote the horizontal space and time average, i.e.,
\[
\la f\ra\;:=\; \limsup_{t_0\uparrow\infty}\frac1{t_0} \int_0^{t_0} \frac1{\Lambda^{d-1}}\int_{[0,\Lambda)^{d-1}} f(t,y)\, dydt
\]
for any function $f(t,y)$. Some equivalent expressions for the Nusselt number can be derived:
\begin{eqnarray}
\Nu &=& \la wT - \del_z T\ra\quad\mbox{for all }z\in[0,H]\label{AN5}\\
&=& \int_0^H \la |\grad T|^2\ra\, dz\label{AN3}\\
&=& \frac1H\int_0^H \la|\grad u|^2\ra\, dz + \frac1H \label{AN4}.
\end{eqnarray}
Indeed, for \eqref{AN5}, we average \eqref{Aeq1} in horizontal space and time and apply the boundary conditions \eqref{eq:9}. Identity \eqref{AN3} can be derived by testing the heat equation \eqref{Aeq1} with $T - (1-z/H)$, integrating by parts and using the divergence-free condition \eqref{Aeq2} and the boundary conditions \eqref{eq:9}, while \eqref{AN4} follows from testing \eqref{Aeq3} with $u$, integrating by parts and using \eqref{Aeq2} and \eqref{eq:9} again.

\medskip

Over many years, the dependence of the Nusselt number on the control parameter $H$ has been studied extensively, cf.\ \cite{GrossmannLohse00, AhlersGrossmannLohse09} and references therein. In the absence of inertia, as in the present model, the expected scaling of $\Nu$ is the following: In the linear regime $H\ll1$, when heat transport is essentially due to conduction, the Nusselt number scales like the imposed temperature gradient: $\Nu\sim 1/H$. 
In the chaotic regime $H\gg1$, it is conjectured that conductive and convective heat transport are balanced in the system, in the sense that $\Nu\sim1$.
While the first scaling law can be easily established, cf.\ \cite[Theorem 2]{S}, a rigorous derivation of the second one remains reluctant until today. The presently best upper bound on the Nusselt number in the chaotic regime was derived by Otto and the author in \cite{OS11} and is optimal only up to a double-logarithmic factor:
\begin{equation}\label{110}
\Nu\;\lesssim\; \ln^{1/3}(\ln H)\quad\mbox{as }H\gg1.
\end{equation}
Observe that proving rigorous lower bounds on $\Nu$ is substantially different from proving upper bounds: Lower bounds depend sensitively on the particular choice of the initial data. Indeed, there are ungeneric solutions to the system \eqref{Aeq1}--\eqref{eq:9} for which the heat flux is less efficient, for instance the purely conducting state $T=1- z/H$, $u=0$. In this case, it is $\Nu=1/H\ll1$. Therefore, one can only expect to prove (physically relevant) a-priori {\em upper} bounds on the Nusselt number in the chaotic regime.

\subsection{Main results of this paper}\label{S:Results}
From now on, we restrict our attention to the chaotic regime, that means, we assume that
\[
H\gg1.
\]
Moreover, in order to rule out ``ungeneric'' configurations like purely conducting solutions (as described in the last subsection) or steady fluid motions, e.g.\ convection rolls, we focus on solutions for which
\[
\Nu\;\gtrsim\; 1\quad\mbox{as }H\gg1.
\]

\medskip

We are interested in statistical properties of the temperature field in the boundary layer in the chaotic regime. Our first result gives upper bounds on  derivatives of the temperature field up to fourth order. We show that in a boundary layer of order one, the temperature can be controlled in appropriate Sobolev norms {\em uniformly in terms of $H$ --- modulo logarithmic prefactors}.

\begin{theorem}\label{AT1}Assume that $H\gg1$ and $\Nu\gtrsim1$. Let $\alpha\in\N$, $\alpha\le 4$. Then there exists a $\gamma>0$ such that
\begin{equation}\label{AT1.2}
\int_0^{1} \la |\grad^{\alpha} T|^{4/\alpha} \ra\, dz
\;\lesssim\; \ln^{\gamma} H.
\end{equation}
\end{theorem}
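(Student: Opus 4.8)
The plan is to combine three ingredients: an $L^\infty$ bound on the temperature, a quantitative Hardy-type control of the velocity inside the boundary layer, and a localized parabolic maximal-regularity bootstrap for $T$ near the bottom plate.

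First, the parabolic maximum principle applied to \eqref{Aeq1} with the boundary data \eqref{eq:9} gives $0 \le T \le 1$ on $[0,H]$, hence $\|T\|_{L^\infty} \le 1$ uniformly in $H$. Second --- and this is the analytic core --- I would prove the Hardy-type velocity estimate. Writing $u=(v,w)$, incompressibility \eqref{Aeq2} and the Stokes law \eqref{Aeq3} reduce, in horizontal Fourier variables, to the fourth-order two-point problem $(\partial_z^2-k^2)^2\hat w_k = k^2\hat T_k$ on $[0,H]$ with $\hat w_k=\partial_z\hat w_k=0$ at $z\in\{0,H\}$, while $\hat v_k$ is an explicit first-order expression in $\partial_z\hat w_k$. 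Since the horizontal period is fixed, all active wavenumbers satisfy $|k|\gtrsim 1$, so on the scale $z\lesssim 1$ the Green's function of this boundary value problem obeys $|G_k(z,s)|\lesssim z^2\min(s,|k|^{-1})$, with exponential decay in $s$ beyond scale $|k|^{-1}$. Pairing this kernel with $\hat T_k$ in $L^2_s$ over the local strip $s\in[0,4]$ and summing over $k$ yields, for $z\in[0,1]$,
\[
\langle w(z)^2\rangle + z^{-2}\langle|v(z)|^2\rangle + z^{-2}\langle|\partial_z w(z)|^2\rangle \;\lesssim\; z^4\int_0^4\langle|\grad T|^2\rangle\,dz \;\le\; z^4\,\Nu \;\lesssim\; z^4\ln^\gamma H,
\]
using \eqref{AN3} for the middle inequality and \eqref{110} for the last. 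The point is that only the \emph{local} thermal dissipation enters (not $\int_0^H\langle T^2\rangle\,dz\sim H$), so $w$ vanishes quadratically and $v$ linearly at $z=0$ with an $H$-uniform (modulo logs) constant; in particular $\int_0^4\langle|u|^2\rangle\,dz\lesssim\ln^\gamma H$. This is the heart of Section \ref{C3S4}.

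With $T$ bounded and $u$ controlled in $L^1$ near the plate, local $L^p$-regularity for the steady Stokes system next upgrades the velocity: from $\|Te\|_{L^\infty}\le1$ and $\|u\|_{L^1([0,4])}\lesssim\ln^\gamma H$ one gets $\|u\|_{W^{2,p}([0,2])}\lesssim_p\ln^\gamma H$ for every $p<\infty$, and, once $\grad^kT$ is known to be locally regular, the same scheme controls $\grad^{k+2}u$ by $\grad^kT$. Then I run the temperature bootstrap. Subtracting the conductive profile, $\theta:=T-(1-z/H)$ solves $\partial_t\theta+u\cdot\grad\theta-\laplace\theta = w/H$ with homogeneous Dirichlet data at $z=0$ and $|\theta|\le1$. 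Local maximal parabolic regularity on space-time cylinders $[t_0,t_0+1]\times\{0<z<2\}$, with the drift $u\cdot\grad\theta$ treated as a subcritical perturbation ($u\in L^\infty$) and the forcing $w/H\in L^2$ (tiny), gives $\grad^2\theta,\partial_t\theta\in L^2(\{0<z<1\})$ with norm $\lesssim\ln^\gamma H$, whence $\grad\theta\in L^4$ by Sobolev embedding in the $(x,t)$ variables (space-time dimension $d+1\le4$); this already settles $\alpha=1,2$. Differentiating the equation in the \emph{horizontal} variables only preserves the homogeneous Dirichlet condition at $z=0$, and the resulting right-hand side is a sum of products $\grad_y^j u\cdot\grad^{m+1-j}\theta$ ($j\ge1$) plus $\grad_y^m(w/H)$, controlled by the velocity bounds above and the regularity of $\theta$ already obtained; parabolic regularity then produces the $\grad_y$-differentiated analogues. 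Pure normal and mixed derivatives are recovered by differentiating the identity $\partial_z^2\theta = \partial_t\theta + u\cdot\grad\theta - \laplace_y\theta - w/H$ in $t$ and $y$ and re-inserting. After finitely many rounds one obtains $\grad^\alpha\theta\in L^2(\{0<z<1\})$ for $\alpha\le4$ with norms $\lesssim\ln^\gamma H$ (the exponent $\gamma$ growing by a bounded factor per round); since $4/\alpha\le2$ for $\alpha\ge2$, \eqref{AT1.2} follows by H\"older's inequality on the bounded set $\{0<z<1\}$, the case $\alpha=1$ is the $L^4$ bound above, and $\grad^\alpha T$ differs from $\grad^\alpha\theta$ by at most $O(1/H)$.

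All estimates are carried out with $z$-cutoffs supported near the plate, passing from slabs $\{0<z<b\}$ to $\{0<z<b'\}$ with $b'<b\le4$; transition-region terms are absorbed using interior parabolic and Stokes regularity together with the crude a priori bounds $\|T\|_{L^\infty}\le1$, $\int_0^H\langle|\grad T|^2\rangle\,dz=\Nu$ and the Hardy estimate for $u$, with nested cutoffs to avoid loss of derivatives; the limsup time-average is handled by applying the estimates on unit time windows and averaging, uniformly in the window. I expect the Hardy-type velocity estimate --- obtaining the quadratic/linear vanishing of $u$ at $z=0$ with a constant uniform in $H$ up to logs, via sufficiently sharp Green's-function bounds for the fourth-order operator on $[0,H]$ --- to be the main obstacle; the localized parabolic bootstrap is bookkeeping-heavy (especially trading $z$-derivatives through the equation and tracking the accumulating log powers) but otherwise routine.
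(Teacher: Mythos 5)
Your two key steps both fail, and they fail at exactly the points the paper identifies as the real difficulties. First, the claimed pointwise Hardy/Green's-function bound $\la w(z)^2\ra + z^{-2}\la|v(z)|^2\ra + z^{-2}\la|\partial_z w(z)|^2\ra \lesssim z^4\,\Nu$ is not correct. It rests on ``all active wavenumbers satisfy $|k|\gtrsim 1$'', but the horizontal period $\Lambda$ is arbitrary (the estimates must be uniform in the aspect ratio, cf.\ the care taken in Lemma~\ref{AL3}), so low horizontal wavenumbers couple the boundary layer to the bulk and no purely local source term suffices. Worse, the bound is self-contradictory: no-slip forces only $v=O(z)$ and $\partial_z w=-\grad_y\cdot v=O(z)$ with a generically nonzero boundary shear, so $z^{-2}\la|v|^2\ra$ and $z^{-2}\la|\partial_z w|^2\ra$ are of order one near $z=0$, not $O(z^4)$. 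The correct statements are integrated ones with logarithmic losses, as in Lemma~\ref{AL1} and Proposition~\ref{AP1} ($\int_0^\delta\la|\grad_y v|^2+|\grad w|^2\ra\,dz\lesssim\delta(\ln H)\Nu$, $\int_0^\delta\la|\partial_z v|^2\ra\,dz\lesssim\delta(\ln^3H)\Nu$), and the paper notes these Hardy estimates are sharp, the logarithms being genuine.

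Second, and decisively, your bootstrap treats the drift $u\cdot\grad\theta$ as a subcritical perturbation ``($u\in L^\infty$)'' in localized parabolic maximal regularity on unit time windows. No such bound is available: every velocity estimate at your disposal (and in the paper) is a \emph{long-time-averaged} $L_2$-type bound; on a fixed time window $u$ can only be bounded via the Stokes equation with $|T|\le1$, which gives constants growing polynomially in $H$ (cf.\ Lemma~\ref{AL3}, whose right-hand side is $H$, not $\ln^\gamma H$), and your intermediate claim $\|u\|_{W^{2,p}}\lesssim_p\ln^\gamma H$ locally also ignores the pressure and lower-order terms, which are only controlled globally. This is precisely why the paper does \emph{not} use ordinary $L_p$ maximal regularity with the drift as an error term, but proves the Besov/Littlewood--Paley estimates of Propositions~\ref{AP2} and \ref{AP3}, which need only time-averaged $L_2$ control of $\grad u$ plus $\sup|\zeta|$, and then kills the remaining $H$-power error terms with the $e^{-M},e^{-N}$ prefactors by choosing $M,N\sim\ln H$. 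Relatedly, your asserted endpoint $\grad^\alpha\theta\in L^2$ locally for all $\alpha\le4$ is stronger than the theorem and cannot be reached with the available information: error terms such as $\grad^3u\cdot\grad\theta$ or $\grad^2u\cdot\grad^2\theta$ are only controllable in $L^1$ and $L^{4/3}$ given the $L_2$ velocity bounds and the maximum principle, which is exactly why the exponents $4/\alpha$ appear in \eqref{AT1.2}. So the proposal is missing the central mechanism of the proof, and the steps replacing it would not go through.
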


In the above statement, the size of the boundary layer is not explicitly fixed to one.  Instead, we can rather think of these estimates to hold in {\em any} boundary layer whose width is of {\em order one} or {\em logarithmic in $H$}. However, for simplicity we restrict ourselves in the following to the order-one case. Likewise, for symmetry reasons, the same bounds hold in the upper boundary layer. Moreover, for the benefit of a concise statement and to trim the estimates in the proofs, we do not compute explicit exponents $\gamma$ on the logarithms. Upper bounds on $\gamma$ (at least for the second and third order derivatives), can be found in the author's PhD thesis \cite{S}.

\medskip 
The bounds stated in Theorem \ref{AT1} can be considered in the context of rigorous upper bounds on the Nusselt number initiated by Constantin and Doering. Indeed, in view of \eqref{AN3}, the bounds in \cite{CD99,  dor06, OS11}, e.g.\ \eqref{110}, can be read as
\[
 \int_0^H \la|\grad T|^2\ra\, dz\;\lesssim\; \ln^{\gamma} H
\]
for some $\gamma>0$, and thus, estimate \eqref{AT1.2} appears to be a new contribution in the study of universal bounds on the temperature field in infinite-Prandtl-number Rayleigh--B\'enard convection.

\medskip

The estimates in Theorem \ref{AT1} can be used to show that, in the vertical boundary layers, the temperature profile is essentially linear:

\begin{theorem}\label{AT1bis}
Assume that $H\gg1$ and $\Nu\gtrsim1$. There exists a $\gamma>0$ such that for any $z\in[0,1]$
\[
\left|\la \left. T\right|_{z}\ra - \left( 1 - z\Nu\right)\right|\;\lesssim\;z^3\ln^{\gamma} H.
\]
\end{theorem}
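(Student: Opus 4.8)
The plan is to derive the linear profile for the horizontal--time average $\la T|_z\ra$ by a Taylor-expansion argument anchored at the bottom plate, using the boundary conditions together with the regularity bounds from Theorem~\ref{AT1} to control the remainder. Write $\theta(z):=\la T|_z\ra$. From \eqref{eq:9} we know $\theta(0)=1$, and since $u=0$ on $z=0$, horizontally averaging the heat equation \eqref{Aeq1} at $z=0$ gives that the second vertical derivative of $\theta$ vanishes there; more importantly, averaging \eqref{Aeq1} over horizontal space and time and integrating once in $z$, using \eqref{AN5}, yields the key pointwise identity
\begin{equation}\label{prof1}
\theta'(z) \;=\; \la wT|_z\ra - \Nu \qquad\text{for all } z\in[0,H],
\end{equation}
because $\la wT - \del_z T\ra = \Nu$ is $z$-independent and $w=0$ on $z=0$ forces the constant of integration. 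In particular $\theta'(0)=-\Nu$, which already gives the claimed slope; the task is to show the convective correction $\la wT|_z\ra$ is $O(z^2\ln^\gamma H)$ for $z\in[0,1]$.

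The second step is therefore to estimate $\la wT|_z\ra$ near the plate. Since $w(y,0)=0$ and $w$ is divergence-compatible with $v$, one expects $w$ to vanish quadratically in $z$: writing $w(y,z)=\int_0^z \del_z w\,dz' = -\int_0^z (\div_y v)(y,z')\,dz'$ and using again that $v$ vanishes on $z=0$ together with the Hardy-type estimate for the velocity field advertised in the abstract (and presumably established in Section~\ref{C3S4}), one obtains a bound of the form $\la |w||_z\ra \lesssim z^2 (\ldots)^{1/2}$ where the factor involves $\int_0^1\la|\grad u|^2\ra\,dz \le H\,\Nu \lesssim \ln^\gamma H$ by \eqref{AN4} and \eqref{110}. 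Pairing this with an $L^\infty$-type control on $T$ in the layer — which follows from Theorem~\ref{AT1} with $\alpha$ up to $4$ via Sobolev embedding in the $z$-variable, since $\int_0^1\la|\grad^\alpha T|^{4/\alpha}\ra\,dz\lesssim\ln^\gamma H$ controls enough derivatives to bound $\|T\|_{L^\infty}$ by a logarithm — gives $\la wT|_z\ra \lesssim z^2\ln^\gamma H$. Integrating \eqref{prof1} from $0$ to $z$ then yields
\[
\left|\theta(z) - (1 - z\Nu)\right| \;=\; \left|\int_0^z \la wT|_{z'}\ra\,dz'\right| \;\lesssim\; \int_0^z (z')^2\ln^\gamma H\,dz' \;\lesssim\; z^3\ln^\gamma H,
\]
which is exactly the assertion.

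The main obstacle I anticipate is making the estimate $\la |w|\, |T|\, |_z\ra\lesssim z^2\ln^\gamma H$ genuinely quantitative and uniform in $H$: one cannot simply use Cauchy--Schwarz in space-time against $\int_0^1\la|\grad u|^2\ra$ naively, because $w$ must be shown to vanish \emph{quadratically}, not just linearly, which requires exploiting both $w|_{z=0}=0$ \emph{and} $\del_z w|_{z=0}=-\div_y v|_{z=0}=0$ (the latter from $v|_{z=0}=0$). This is precisely where the new Hardy-type inequality for the convecting velocity enters — it should give $\la w^2/z^4\ra$ or a comparable weighted quantity bounded by the viscous dissipation, letting the two powers of $z$ come out cleanly. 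A secondary technical point is that Theorem~\ref{AT1} controls mixed space-time $L^4$-type norms of $T$ and its derivatives rather than a pointwise-in-time bound, so the pairing with $w$ must be done carefully inside the $\la\cdot\ra$ average, interpolating $\|T\|_{L^\infty}$ against the fractional-order information in \eqref{AT1.2} and using Hölder in $z$ so that the powers of $z$ from the velocity bound and from the $T$-bound combine to the stated cubic rate. Beyond that, the argument is a bookkeeping exercise once \eqref{prof1} and the quadratic decay of $w$ are in hand.
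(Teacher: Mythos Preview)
Your route is genuinely different from the paper's. The paper never touches the flux term $\la wT\ra$; instead it Taylor-expands $\la T|_z\ra$ to fourth order at $z=0$, reads off $\la T|_{z=0}\ra=1$, $\la\partial_z T|_{z=0}\ra=-\Nu$, $\la\partial_z^2 T|_{z=0}\ra=0$ from the boundary data and the equation, and then bounds the cubic and quartic remainder directly with the $\alpha=3,4$ cases of Theorem~\ref{AT1}. Given Theorem~\ref{AT1}, this is a two-line argument and uses only temperature estimates. Your flux-based approach is conceptually appealing because, once corrected, it bypasses Theorem~\ref{AT1} entirely: the only facts about $T$ it needs are \eqref{AN5} and the maximum principle.

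Two points in your sketch need repair. First, the $L^\infty$ control on $T$ is free from the maximum principle, $|T|\le 1$ (see \eqref{AT1.4}); invoking Theorem~\ref{AT1} and Sobolev embedding here is unnecessary. Second, and more seriously, the Hardy-type estimate you cite does \emph{not} deliver $\la|w|\,\ra(z)\lesssim z^2(\int_0^1\la|\grad u|^2\ra)^{1/2}$. The Hardy bound in Section~\ref{C3S4} controls $\int_0^H z^{-1}\la|\grad w|^2\ra\,dz$ (or a negative-norm analogue for $\grad_y^{-1}w$), and combined with $w|_{z=0}=\partial_z w|_{z=0}=0$ this yields at best $\la w^2\ra(z)\lesssim z^2\ln^\gamma H$, hence $|\la wT\ra(z)|\lesssim z\ln^\gamma H$ and only a quadratic error after integration. (Also, your chain $\int_0^1\la|\grad u|^2\ra\le H\Nu\lesssim\ln^\gamma H$ via \eqref{AN4}--\eqref{110} is false: $H\Nu\sim H$. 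The localized bound you want is Proposition~\ref{AP1}.) To obtain the needed $|\la wT\ra(z)|\lesssim z^2\ln^\gamma H$ you must go one derivative higher on the velocity: writing $w(y,z)=\tfrac{z^2}{2}\partial_z^2 w(y,0)+\tfrac12\int_0^z(z-z')^2\partial_z^3 w(y,z')\,dz'$ and pairing with $|T|\le 1$, the first term is controlled by a trace estimate for $\partial_z^2 w|_{z=0}$ and the second by $\int_0^H\la|\grad^3 u|^2\ra\,dz\lesssim\Nu$ (Lemma~\ref{AL6}). With this fix, your argument goes through and yields the stated cubic rate without ever appealing to Theorem~\ref{AT1}.
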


It follows from the above Theorem that
\[
\la\left. T\right|_{z}\ra\;\approx\;  1 - z\Nu \quad\mbox{for }z\ll \ln^{-\gamma/3}H.
\]
Again, by symmetry, an analogue statement can be derived in the upper boundary layer. As the majority of the temperature drop between the bottom and the top plate in the Rayleigh--B\'enard experiment just happens in the boundary layers, the heat flow must be inversely proportional to the width of the layer. The precise slope can be easily deduced from \eqref{AN5} by choosing $z=0$ and exploiting the boundary conditions \eqref{eq:9} for $w$, namely $\left.\partial_z\right|_{z=0} \la T\ra=-\Nu$. Now, in Theorem \ref{AT1bis}, we recover this slope all over the boundary layer. The main benefit of the above estimate is the cubic control of the deviation in vertical direction around the linear profile, which only depends logarithmically and thus {\em weakly} on $H$.

\medskip

The method we use  in this paper in order to derive the bounds on the temperature field stated in Theorem \ref{AT1} are motivated by bounds derived by Otto for the viscous Burgers equation in \cite{Otto09} and Otto and Ramos for the Navier--Stokes equation in \cite{OttoRamos}. In both papers, the authors establish $L_p$ maximal-regularity-type estimates with tools borrowed from Harmonic Analysis. Using and refining the techniques from these two papers, we derive new {\em local} maximal regularity estimates for convection-diffusion equations, see Propositions \ref{AP2} and \ref{AP3} on p.\ \pageref{AP2}f. After differentiating the heat equation \eqref{Aeq1} in spatial directions and and localizing in the boundary layer, these estimates can be applied to gain control on gradients of the temperature field. We have to comment on the choice of the norms in the assertions of Theorem \ref{AT1}. These are determined by the leading order error terms, which come from differentiating and localizing the transport nonlinearity. Only few tools are known that are applicable in order to bound these terms uniformly in $H$ (modulo logarithms): localized $L_2$ bounds on the velocity field in the spirit of those derived in \cite{dor06}, cf.\ Proposition \ref{AP1} on page \pageref{AP1}, the maximum principle for $T$, and interpolation inequalities of the form
\[
\int |\grad^{\alpha-1} T|^{4/(\alpha-1)}\, dx\;\lesssim\; \sup |T|^{4/\alpha(\alpha-1)} \int |\grad^{\alpha} T|^{4/\alpha}\, dx.
\]
The value of $p$ in the $L_p$ maximal-regularity estimates is chosen optimally with respect to the estimates at hand.

\section{Bounds on the velocity field}\label{C3S4}

In this section, we collect a series of $L_2$ bounds on the velocity fields: some known results from \cite{dor06} and \cite{OS11} and some new results together with their proofs. The main new result, stated in Proposition \ref{AP1} below, is an estimate on the shear velocity $v$ in the boundary layer. As this estimate might be of independent interest, in this section, we state all results for boundary layers of {\em arbitrary} width $\delta$.

\medskip

We consider the stationary Stokes equation for the velocity field $u=(v,w)\in \R^{d-1}\times \R$ and the hydrodynamic pressure $p$ in the Rayleigh--B\'enard problem, i.e., equations \eqref{Aeq2} \& \eqref{Aeq3} equipped with periodic boundary conditions in all $d-1$ horizontal directions and with no-slip conditions on the vertical boundaries, i.e., $w=v=0$ for $z\in\{0,H\}$. We observe that, because of \eqref{Aeq2}, we have the additional information that $\partial_z w =-\grad_y\cdot v=0$ for $z\in\{0,H\}$. It is convenient to eliminate the pressure term in \eqref{Aeq3} with the help of the incompressibility condition \eqref{Aeq2}. This leads to a fourth-order equation for $w$:
\begin{equation}\label{Aeq4}
\laplace^2w\; =\; -\laplace_y T\quad\mbox{and}\quad w=\dz w=0 \mbox{ for }z\in\{0,H\}.
\end{equation}
Here, $\laplace_y$ denotes the Laplace operator in the horizontal variables. Indeed, applying the divergence operator to \eqref{Aeq3} yields $\laplace p= \partial_z T$, and therefore, applying the Laplace operator to the vertical component of \eqref{Aeq3}, we obtain $\laplace^2 w = \laplace \partial_z p - \laplace T= -\laplace_y T$. In a similar way, making additionally use of the boundary conditions, we see that $v$ satisfies the equation $\laplace_y v = \grad_y \grad_y\cdot v$, and thus by \eqref{Aeq2}, $v$ is determined by
\begin{equation}\label{AR3}
v= (-\laplace_y)^{-1}\grad_y\del_z w.
\end{equation}

\medskip

Furthermore, since there are no external forces acting on the fluid velocity, we have 
\[
\la u(t_0)\ra\; =\;0
\]
for every $t_0>0$, which follows immediately from averaging \eqref{Aeq2} and \eqref{Aeq3} in horizontal direction, and exploiting the boundary conditions for $u$.

\medskip

At this point, we like to recall some bounds on the vertical velocity component $w$, derived in \cite{dor06} and \cite{OS11}. The key estimate of \cite{dor06} (here, slightly extended by including the horizontal gradient $|\grad_y w|^2$) is the following: Let $w$ and $T$ be periodic in $y$ and satisfy \eqref{Aeq4}. Then
\begin{equation}\label{AR1}
 \int_0^H \frac{\la |\grad w|^2\ra}z\, dz\; \lesssim\; \int_0^H \frac{\la Tw\ra}{z}\, dz.
\end{equation}
In Lemma \ref{AL1} below, we will extend this result to the horizontal velocity component $v$. Notice that, as a byproduct of the result in \cite{dor06}, the r.h.s.\ of \eqref{AR1} is controlled by the Nusselt number (modulo a logarithmic correction):
\begin{equation}\label{AR2}
\int_0^H \frac{\la Tw\ra}{z}\, dz\;\lesssim\; (\ln H)\Nu.
\end{equation}
We also refer to Lemma \cite[Lemma 2]{OS11} for a direct proof of this estimate. Consequently, \eqref{AR1} \& \eqref{AR2} imply
\begin{lemma}[\cite{dor06, OS11}]\label{AL4}
\begin{equation}\label{AL4.1}
\int_0^H \frac{\la|\grad w|^2\ra}z\, dz\; \lesssim\; (\ln H)\Nu.
\end{equation}
\end{lemma}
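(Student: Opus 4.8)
The plan is simply to concatenate the two estimates just recalled; the lemma carries no content beyond them. Applying the weighted coercivity bound \eqref{AR1} to the solution $w$ of the fourth-order problem \eqref{Aeq4} gives
\[
\int_0^H \frac{\la|\grad w|^2\ra}{z}\,dz\;\lesssim\;\int_0^H\frac{\la Tw\ra}{z}\,dz,
\]
and \eqref{AR2} bounds the right-hand side by $(\ln H)\,\Nu$. Chaining the two inequalities yields \eqref{AL4.1}, and nothing further --- not even the equations \eqref{Aeq1}--\eqref{Aeq3} beyond what is already built into \eqref{AR1} --- is needed.

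For context, let me indicate where the actual work sits, should one want a self-contained argument rather than a citation. Estimate \eqref{AR1} is a weighted energy estimate for the biharmonic operator under clamped boundary conditions: working mode by mode in the horizontal Fourier variable, one multiplies \eqref{Aeq4} by a weight proportional to $w/z$ (regularized near $z\in\{0,H\}$ to tame the mild singularity of $1/z$), integrates by parts twice, and uses $w=\dz w=0$ on the plates to absorb the curvature contributions, while the forcing $-\laplace_y T$ produces precisely the term $\int_0^H z^{-1}\la Tw\ra\,dz$ after a duality step in the horizontal variable. Estimate \eqref{AR2} in turn follows from the Nusselt-number identities of Subsection \ref{S:Model} together with the $L_2$ bound on $w$ from \cite{dor06} and a dyadic decomposition of $z\in[0,H]$ to accommodate the weight $1/z$; a direct proof is in \cite[Lemma 2]{OS11}.

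The only point requiring a moment's care --- and it is already guaranteed by the hypotheses of \eqref{AR1} --- is integrability of the weighted integrands at the plates: the clamped conditions $w=\dz w=0$ at $z\in\{0,H\}$ force $w=O(z^2)$ and $\grad w=O(z)$ as $z\downarrow 0$ (and symmetrically at $z=H$), so that both $z^{-1}\la|\grad w|^2\ra$ and $z^{-1}\la Tw\ra$ vanish linearly there and the integrals converge. I therefore do not anticipate any genuine obstacle in the lemma itself: the substance is entirely contained in the cited inputs \eqref{AR1} and \eqref{AR2}.
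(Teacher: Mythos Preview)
Your proposal is correct and matches the paper's argument exactly: the paper presents Lemma~\ref{AL4} as an immediate consequence of \eqref{AR1} and \eqref{AR2}, which is precisely the concatenation you describe. The additional context you provide on the provenance of \eqref{AR1} and \eqref{AR2} is accurate but goes beyond what the paper includes here, since those inputs are simply cited from \cite{dor06} and \cite{OS11}.
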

In \cite[Lemma 3, see also eq.\ (18)]{OS11}, the authors derive an $L_2$ maximal regularity estimate for the third-order derivatives of $w$ from \eqref{Aeq4}. By the Nusselt number representations \eqref{AN3}\&\eqref{AN4}, the result can be stated as follows:
\begin{equation}\label{5}
\int_0^H \la|\grad_y^{-1} \grad^4 w|^2\ra\, dz\;\lesssim\; \Nu,
\end{equation}
or, invoking the formula \eqref{AR3}:
\begin{lemma}[\cite{OS11}]\label{AL6}
\begin{equation}\label{AL6.1}
\int_0^H \la |\grad^3 u|^2\ra\, dz\; \lesssim\; \Nu.
\end{equation}
\end{lemma}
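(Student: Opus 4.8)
The assertion is essentially a bookkeeping consequence of the maximal-regularity bound \eqref{5} and the representation \eqref{AR3}. The plan is to establish
\[
\int_0^H \la |\grad^3 u|^2\ra\, dz\;\lesssim\; \int_0^H \la |\grad_y^{-1}\grad^4 w|^2\ra\, dz,
\]
which together with \eqref{5} yields the claim. Since $u=(v,w)$, it suffices to treat the $w$- and $v$-contributions to $\grad^3 u$ separately. All fields being periodic in $y$, I would argue via horizontal Fourier series: let $k$ denote the horizontal wave vector and $\widehat{(\cdot)}_k$ the corresponding Fourier coefficient, so that by Plancherel $|\grad_y^{-1}\grad^4 w|^2$ corresponds to $\sum_{|\gamma|=4}|k|^{-2}|\widehat{\partial^\gamma w}_k|^2$, the sum running over all fourth-order multi-indices $\gamma$ in the $d$ variables $(y,z)$. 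As a preliminary observation, averaging \eqref{Aeq2} over $y$ gives $\partial_z\overline w=0$, and the no-slip condition $w|_{z=0}=0$ forces the horizontal mean $\overline w$ to vanish identically; likewise $\overline v\equiv0$, since \eqref{AR3} has no $k=0$ mode. Hence there is no zero-mode contribution to $\grad^3 u$ to be accounted for.

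For the vertical velocity, a typical component of $\grad^3 w$ is $\partial_y^\alpha\partial_z^\beta w$ with $|\alpha|+\beta=3$, and for $k\neq0$
\[
|\widehat{\partial_y^\alpha\partial_z^\beta w}_k|^2\;=\;|k^\alpha|^2\,|\partial_z^\beta\widehat w_k|^2\;=\;\sum_{j=1}^{d-1}|k|^{-2}\,|k_j|^2\,|k^\alpha|^2\,|\partial_z^\beta\widehat w_k|^2\;=\;\sum_{j=1}^{d-1}|k|^{-2}\,|\widehat{\partial_{y_j}\partial_y^\alpha\partial_z^\beta w}_k|^2 ,
\]
and each $\partial_{y_j}\partial_y^\alpha\partial_z^\beta w$ is one of the fourth-order derivatives collected in $\grad^4 w$. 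Summing over the finitely many components of $\grad^3 w$ and absorbing the bounded multiplicities of the resulting fourth-order derivatives into the implicit constant, I obtain $|\widehat{\grad^3 w}_k|^2\lesssim|k|^{-2}|\widehat{\grad^4 w}_k|^2$, i.e.\ $|\grad^3 w|\lesssim|\grad_y^{-1}\grad^4 w|$ in $L_2$ in $y$, pointwise in $z$ and $t$.

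For the shear velocity I would use \eqref{AR3}, which in Fourier reads $\widehat{v_j}_k=|k|^{-2}(ik_j)\,\partial_z\widehat w_k$ for $k\neq0$. Then a component $\partial_y^\alpha\partial_z^\beta v_j$ of $\grad^3 v$ (again $|\alpha|+\beta=3$) has Fourier coefficient $(ik)^\alpha(ik_j)|k|^{-2}\partial_z^{\beta+1}\widehat w_k$, so that
\[
\sum_{j=1}^{d-1}|\widehat{\partial_y^\alpha\partial_z^\beta v_j}_k|^2\;=\;|k^\alpha|^2\,|k|^{-2}\,|\partial_z^{\beta+1}\widehat w_k|^2\;=\;|k|^{-2}\,|\widehat{\partial_y^\alpha\partial_z^{\beta+1} w}_k|^2 ,
\]
and since $|\alpha|+(\beta+1)=4$, the derivative $\partial_y^\alpha\partial_z^{\beta+1} w$ is once more a component of $\grad^4 w$. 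Summing over the components of $\grad^3 v$ gives $|\grad^3 v|\lesssim|\grad_y^{-1}\grad^4 w|$ in $L_2$ in $y$ as well. Adding the two bounds, integrating in $z\in[0,H]$ and applying the average $\la\tacka\ra$ yields the displayed inequality, and \eqref{5} then finishes the proof.

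There is no genuine difficulty here; the one point that deserves attention is that \emph{all} third-order derivatives of $u$ must be dominated by $\grad_y^{-1}\grad^4 w$, including the purely vertical ones such as $\partial_z^3 w$, which carry no horizontal derivative and hence no spare power of $\grad_y$. This is exactly why one needs both the vanishing of the horizontal mean of $w$ (so that no zero mode is lost upon inverting $\grad_y$) and the fact that the fourth-order tensor in \eqref{5} contains the mixed derivatives $\partial_{y_j}\partial_z^3 w$; for the shear component it is the extra vertical derivative and the extra negative power of $\grad_y$ supplied by \eqref{AR3} that make the homogeneities match.
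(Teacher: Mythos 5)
Your argument is correct and is essentially the paper's own route: the paper obtains Lemma \ref{AL6} precisely by combining the maximal-regularity bound \eqref{5} from \cite{OS11} with the representation \eqref{AR3}, leaving the Plancherel bookkeeping (which you carry out, including the zero-mode check via $\overline w\equiv\overline v\equiv 0$) implicit. Only minor remark: justify $\overline v\equiv 0$ directly by horizontally averaging the Stokes equation and using the no-slip conditions (as the paper does when noting $\la u(t_0)\ra=0$), rather than by appealing to \eqref{AR3}, which is itself only derived for the nonzero modes.
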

The $\dot H^{-1}_2$ norm in \eqref{5} can best be understood on the Fourier level: For any periodic function $f(y)$ we define
\[
\frac1{\Lambda^{d-1}}\int_{[0,\Lambda)^{d-1}} |\grad_y^{-1} f|^2\, dy\; :=\; \sum_{k\in \frac{2\pi}{\Lambda}\Z^{d-1}} |k|^{-2}|\F f(k)|^2.
\]
Here $\F f(k)$ is the Fourier transform of $f$ at wave number $k$, i.e.,
\begin{equation}\label{OSF}
\F f(k) \;:=\; \frac1{\Lambda^{d-1}}\int_{[0,\Lambda)^{d-1}} e^{ik\cdot y} f(y)\, dy.
\end{equation}

\medskip

Our first new result is the following $\dot H_2^{-1}$ bound on $\partial_zw$, which extends estimate \eqref{AR1} to the horizontal velocity component $v$ via the relation \eqref{AR3}:

\begin{lemma}\label{AL1}
Let $w$ and $T$ be periodic in $y$ and satisfy \eqref{Aeq4}. Then for any $0\le\delta\ll H$ and $\alpha>1$ it holds
\begin{equation}\label{AL1.6}
\int_0^{\delta} \frac{\la |\grad_y^{-1} w|^2\ra}{z^{5-\alpha}} \, dz
+\int_0^{\delta} \frac{\la |\grad_y^{-1}\dz w|^2\ra}{z^{3-\alpha}} \, dz
 \; \lesssim\;\frac{\delta^{\alpha}}{\alpha-1}\ln^2 \left(\frac{H}{\delta}\right) \int_0^H \frac{\la  T w \ra}z\, dz,
\end{equation}
and
\begin{equation}\label{AL1.7}
 \int_{\delta}^H \frac{\la |\grad_y^{-1} w|^2\ra}{z^5} \, dz
 +\int_{\delta}^H \frac{\la |\grad_y^{-1}\dz w|^2\ra}{z^3} \, dz
 \; \lesssim\;\ln^2 \left(\frac{H}{\delta}\right) \int_0^H \frac{\la  T w \ra}z\, dz.
\end{equation}
\end{lemma}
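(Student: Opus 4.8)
The plan is to reduce everything to an estimate on $w$ itself (and its vertical derivative) against the right-hand side $\int_0^H \la Tw\ra/z\,dz$, exploiting that $w$ solves the fourth-order equation \eqref{Aeq4} with clamped boundary conditions $w=\dz w=0$ at $z=0$ and the a priori bound \eqref{AR1} on $\la|\grad w|^2\ra/z$. The basic idea is that the operator $(-\laplace_y)^{-1/2}$ applied to \eqref{Aeq4} turns the inhomogeneity $-\laplace_y T$ into $(-\laplace_y)^{1/2}T$, so working with $g:=(-\laplace_y)^{-1/2}w=\grad_y^{-1}w$ we have $\laplace^2 g = (-\laplace_y)^{1/2}T$ with the same boundary conditions. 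On the Fourier side in $y$ (with wave number $k$), each mode $\hat g(k,z)$ solves an ODE in $z$ of the form $(\dz^2-|k|^2)^2\hat g = |k|\hat T$, and we want weighted $z$-integrals of $|\hat g|^2$ and $|\dz\hat g|^2$.

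\textbf{Key steps.} First I would derive the pointwise-in-$z$ control coming from the boundary conditions: since $g=\dz g=0$ at $z=0$, the fundamental theorem of calculus gives $|g(z)|\lesssim z^{3/2}\big(\int_0^z |\dz^2 g|^2\,dz'\big)^{1/2}$ and $|\dz g(z)|\lesssim z^{1/2}\big(\int_0^z|\dz^2 g|^2\,dz'\big)^{1/2}$ type bounds (Hardy inequalities), which is exactly where the powers $z^{5-\alpha}$ and $z^{3-\alpha}$ (resp.\ $z^5$, $z^3$) appear: dividing $|g|^2$ by $z^{5-\alpha}$ leaves a weight $z^{\alpha-2}$, integrable near $0$ iff $\alpha>1$, and the factor $1/(\alpha-1)$ in \eqref{AL1.6} is the value of that integral. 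Second, I would feed in the elliptic regularity for the clamped bilaplacian: testing $\laplace^2 g=(-\laplace_y)^{1/2}T$ against $g$ and integrating by parts gives $\int|\grad^2 g|^2 \lesssim \|(-\laplace_y)^{1/2}T\|\,\|g\|$, but more to the point one wants a \emph{weighted} version, so I would instead mimic the weighted test-function argument of \cite{dor06} that produces \eqref{AR1}: multiply the equation by $g/z^{\beta}$ (or rather by $\grad_y^{-1}$ of the Doering--Constantin test function), integrate in $z$ over $(0,\delta)$ or $(\delta,H)$, integrate by parts, and absorb the commutator terms generated by the weight $z^{-\beta}$. Since $\grad_y^{-1}$ commutes with $\dz$ and with multiplication by functions of $z$, and since $\la Tw\ra = \la (-\laplace_y)^{1/2}T \cdot \grad_y^{-1}w\ra$ by Plancherel, the right-hand side reorganizes precisely into $\int_0^H \la Tw\ra/z\,dz$ after a Cauchy--Schwarz in the $z$-variable; the cut at $z=\delta$ and the splitting of the weight is what generates the $\ln^2(H/\delta)$ factor (one logarithm from each of the two orders of differentiation separating $w$ from the natural $\laplace^2$-scaling, as in \eqref{AL4.1} where a single log already appears). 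Third, I would patch the interior estimate \eqref{AL1.7}: on $(\delta,H)$ the weight $z^{-5}$ has no singularity, so one can use \eqref{AR1} more directly together with a Poincaré/Hardy step relating $\grad_y^{-1}w$ to $\grad_y^{-1}\dz w$ to $\grad w$, again losing the $\ln^2(H/\delta)$ from dyadic decomposition of $(\delta,H)$ into scales.

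\textbf{Main obstacle.} The delicate point is the bookkeeping of the weights: one must choose the test function $z^{-\beta}$ (with $\beta$ tied to $\alpha$) so that, after integration by parts in $z$, \emph{all} boundary terms at $z=0$ and $z=\delta$ vanish or have a favorable sign, and so that the $|k|^{-2}$-weighted ($\grad_y^{-1}$) quantities close on themselves rather than leaking into un-inverted norms of $w$. Getting the sign of the commutator terms right — i.e.\ showing that the $\beta$-dependent lower-order terms from $\dz(z^{-\beta}\dz g)$ can be absorbed into the good term $\int z^{-\beta}|\dz^2 g|^2$ up to the factor $1/(\alpha-1)$ — is the analytic heart, and it is essentially the reason the estimate degenerates as $\alpha\downarrow 1$. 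A secondary subtlety is the passage from the ODE bound on each Fourier mode to the stated $\dot H_2^{-1}$-norm bound uniformly in $k$: one needs the constants in the Hardy-type inequalities to be independent of $|k|$, which holds because the relevant operator $(\dz^2-|k|^2)^2$ dominates $\dz^4$ for the purpose of \emph{lower} bounds, so the worst case is $k=0$ and the bounds are genuinely uniform. Once these are in place, combining the three ingredients — Hardy near $z=0$, weighted energy estimate à la \cite{dor06}, dyadic decomposition for the logarithms — yields \eqref{AL1.6} and \eqref{AL1.7}.
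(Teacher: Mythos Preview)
Your proposal has the right overall framework (Fourier decomposition in $y$, Hardy inequalities with power weights, reduction to the \cite{dor06}-type identity), but it is missing the key technical device that makes the chain of estimates close, and your explanation of where the $\ln^2(H/\delta)$ comes from is not correct.

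The paper does \emph{not} work with $g=\grad_y^{-1}w$ and Hardy bounds of the type $|g(z)|\lesssim z^{3/2}\big(\int_0^z|\dz^2 g|^2\big)^{1/2}$. Instead, the decisive substitution is $\phi=\F w/z^2$ on each Fourier mode. The starting point from \cite{dor06} is already in $\phi$-language,
\[
\frac1{|k|^2}\int_0^H z^3\Big|\frac{d^2}{dz^2}\phi\Big|^2\,dz\;\le\;\int_0^H \frac1z\,\F T\,\overline{\F w}\,dz,
\]
and the entire proof is then a chain of three Hardy-type inequalities for $\phi$ (not for $\hat g$): a standard one passing from $\phi''$ to $\phi'$, a \emph{critical} one passing from $\phi'$ to $\phi$ on $(\delta,H)$, and a mixed estimate that converts back from $\phi$ to $\F w$ and $\dz\F w$ with the right powers of $z$. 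The $|k|^{-2}$ is already sitting in the first inequality and the Hardy steps are entirely $k$-free, so your discussion of ``worst case $k=0$'' is beside the point.

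The logarithm is not produced by a dyadic decomposition. It comes in one shot from the critical Hardy inequality
\[
\int_{\delta}^H \frac1z|\phi|^2\,dz\;\lesssim\;\ln^2\!\Big(\frac{H}{\delta}\Big)\int_{\delta}^H z\Big|\frac{d}{dz}\phi\Big|^2\,dz,
\]
which is exactly the logarithmic failure of the scale-invariant Hardy inequality (the extremizer is $\phi\approx\ln(z/\delta)$). The $\ln^2$ in \eqref{AL1.6} is then inherited from \eqref{AL1.7} because the boundary-layer estimate has boundary terms at $z=\delta$ that must be controlled by the bulk estimate; the paper circumvents this by a single piecewise weight $\rho(z)=z^{1+\alpha}/\delta^{\alpha}$ on $(0,\delta)$ and $\rho(z)=z$ on $(\delta,H)$, so the two regions are handled at once and no boundary terms at $z=\delta$ appear. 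The $1/(\alpha-1)$ arises when one applies Cauchy--Schwarz and Young to the cross-term $\int_0^\delta z^{\alpha-4}\F w\,\overline{\dz\F w}$ in the expansion of $\int\rho|\phi'|^2$, not from integrating $z^{\alpha-2}$ near $0$ as you suggest. Your test-function idea ``multiply by $g/z^\beta$'' is in the right spirit, but without the $\phi=\F w/z^2$ change of variables you will not obtain a clean positive term controlling both $\int_0^\delta z^{\alpha-5}|\F w|^2$ and $\int_0^\delta z^{\alpha-3}|\dz\F w|^2$ simultaneously, and the sign bookkeeping you flag as the ``main obstacle'' will indeed not close.
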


As we shall see, the main ingredients for estimates \eqref{AL1.6} and \eqref{AL1.7} are Hardy-type inequalities for the function $\frac{w}{z^2}$, i.e., inequalities of the form
 \begin{equation}\label{AL1.4}
\int_0^H \frac1{z^{1+\nu}}\phi^2\, dz\;\lesssim\; \int_0^H z^{1-\nu} (\frac{d}{dz} \phi)^2\, dz
\end{equation}
for some $\nu\in \R$. This inequality fails for $\nu=0$ --- but only logarithmically. We exploit this logarithmic failure in two different ways. In the ``bulk'' $(\delta,H)$, the logarithmic failure of \eqref{AL1.4} with $\nu=0$ produces the prefactor $\ln^2(H/\delta)$ in \eqref{AL1.7}. Notice that the estimates leading to \eqref{AL1.7} are sharp. In particular, the logarithmic prefactor can be recovered in the critical Hardy inequality when choosing $\phi\approx \ln (z/\delta)$ (with the appropriate boundary conditions at $z=H$). In the ``boundary layer'' $(0,\delta)$, we prove a subcritical Hardy inequality, i.e., \eqref{AL1.4} with some $\nu = -\alpha<0$, which leads to \eqref{AL1.6}. Notice that in view of the prescribed boundary conditions for $w$, it holds $\frac{\la|\grad_y^{-1} w|^2\ra}{z^5} \sim \frac{\la|\grad_y^{-1}\dz w|^2\ra}{z^3}\sim\frac1z$ for $z\ll1$, which just beats the integrability. Consequently, we can expect \eqref{AL1.4} to be true only for $\nu<0$, i.e., $\alpha>
 0$. In light of this observation, our bound for the boundary layer term seems to be suboptimal, since it requires $\alpha >1$. However, it is optimal in terms of scaling --- a fact that will be exploited in Proposition \ref{AP1} below. The logarithmic prefactor in \eqref{AL1.6} stems --- roughly speaking --- from the fact that the boundary terms at $z=\delta$ have to be estimated via the bulk estimate \eqref{AL1.7}.

\medskip

\begin{proof}[Proof of Lemma \ref{AL1}]
We prove the result on the Fourier level, cf.\ \eqref{OSF}. Under Fourier transformation, \eqref{Aeq4} translates into
\[
|k|^2 \F w - 2 \frac{d^2}{dz^2} \F w + \frac1{ |k|^2} \frac{d^4}{dz^4} \F w\; =\; \F  T
\]
and
\[
\F w=\frac{d}{dz} \F w = 0\mbox{ for }z\in\{0,H\}.
\]

\medskip

We start by recalling the following inequality that has been derived in \cite[p.\ 238\&239]{dor06} and that can be easily reproduced by testing the equation for $\F w$ with $z^{-1} \overline{\F w}$ and integrating by parts: With $\phi = \frac{\F w}{z^2}$ we have
\begin{equation}\label{1}
\frac1{|k|^2} \int_0^H z^3 |\frac{d^2}{dz^2} \phi|^2\, dz
\;\le\;
\int_0^H \frac1z \F  T \overline{ \F w}\, dz.
\end{equation}
Notice that we may assume --- applying an approximation argument --- that $\phi$ has the same boundary values as $\F w$, so that boundary terms vanish when integrating by parts. Now the statements in \eqref{AL1.6} and \eqref{AL1.7} follow from the three Hardy(-type) estimates
\begin{eqnarray}
\int_0^H z|\frac{d}{dz} \phi|^2\, dz& \le& \int_0^H z^3|\frac{d^2}{dz^2} \phi|^2\, dz,\label{2}\\
\int_{\delta}^H \frac1z | \phi|^2\, dz &\lesssim& \ln^2\left(\frac{H}{\delta}\right) \int_{\delta}^H z|\frac{d}{dz}\phi|^2\, dz\label{3}
\end{eqnarray}
and
\begin{eqnarray}
\lefteqn{
\frac{\alpha-1}{\delta^{\alpha}} \left(\int_0^{\delta} \frac1{z^{5-\alpha}} |\F w|^2\, dz + \int_0^{\delta} \frac1{z^{3-\alpha}} |\frac{d}{dz} \F w|^2\, dz\right) + \int_{\delta}^H \frac1{z^3} |\frac{d}{dz} \F w|^2\, dz}\nonumber\\
&\lesssim& \int_{0}^H z|\frac{d}{dz} \phi|^2\, dz + \int_{\delta}^H \frac1z |\phi|^2\,dz\label{4}.\hspace{15em}
\end{eqnarray}
Indeed, for instance estimate \eqref{AL1.6} can be derived as follows:
\begin{eqnarray*}
\lefteqn{\frac{\alpha-1}{\delta^{\alpha}}\left(\frac1{|k|^2}\int_0^{\delta} \frac1{z^{5-\alpha}} |\F w|^2\, dz + \frac1{|k|^2}\int_0^{\delta} \frac1{z^{3-\alpha}}|\frac{d}{dz} \F w|^2\, dz\right)}\\
&\stackrel{\eqref{4}}{\lesssim}& \frac1{|k|^2}\int_{0}^H z|\frac{d}{dz}\phi|^2\, dz + \frac1{|k|^2}\int_{\delta}^H \frac1z|\phi|^2\, dz\\
&\stackrel{\eqref{3}, H\gg\delta }{\lesssim}& \ln^2\left(\frac{H}{\delta}\right) \frac1{|k|^2}\int_{0}^H z|\frac{d}{dz}\phi|^2\, dz\\
&\stackrel{\eqref{2}}{\lesssim}&\ln^2\left(\frac{H}{\delta}\right) \frac1{|k|^2}\int_0^H z^3|\frac{d^2}{dz^2}\phi|^2\, dz\\
&\stackrel{\eqref{1}}{\lesssim}&\ln^2\left(\frac{H}{\delta}\right) \int_0^H \frac1z \overline{\F w}\F T\, dz.
\end{eqnarray*}
In view of the Plancherel Theorem, summing over all wave numbers $k$ and averaging in time yields \eqref{AL1.6}. Estimate \eqref{AL1.7} is derived similarly.

\medskip

It remains to prove statements \eqref{2}--\eqref{4}. Observe that \eqref{2} is a standard Hardy estimate. We omit its proof which is very similar to the one of the critical Hardy inequality \eqref{3}. The latter follows from
\[
\int_1^{\hat H} \frac1{\hat z}|\hat \phi|^2\, d\hat z
\;\lesssim\;
(\ln^2 \hat H)\int_{1}^{\hat H} \hat z|\frac{d}{d\hat z} \hat\phi|^2\, d\hat z
\]
via the transformation $z=\delta \hat z$, $H=\delta \hat H$, and $\phi(z) = \hat \phi(\hat z)$. This estimate is easily established. We integrate by parts and estimate with the help of the Cauchy--Schwarz inequality:
\begin{eqnarray*}
 \int_1^{\hat H} \frac1{\hat z}| \hat \phi|^2\, d\hat z
&=& \int_1^{\hat H} \frac{d}{d \hat z} (\ln \hat z) |\hat \phi|^2\, d\hat z\\
&\le&  2\int_1^{\hat H} (\ln \hat z) |\hat\phi||\frac{d}{d\hat z} \hat\phi|\, d \hat z\\
&\le&2 (\ln \hat H)\left(\int_1^{\hat H}  \frac1{\hat z}|\hat\phi|^2\, d\hat z\int_1^{\hat H} \hat z|\frac{d}{d\hat z}\hat \phi|^2\, d\hat z\right)^{1/2},
\end{eqnarray*}
which immediately yields \eqref{3}.

\medskip

The proof of \eqref{4} is slightly more involved. We introduce the weight function
\[
\rho(z) =\left\{\begin{array}{ll}\frac{z^{1+\alpha}}{\delta^{\alpha}}&\mbox{for }0\le z\le \delta,\\
z&\mbox{for }\delta \le z \le H,
\end{array}\right.
\]
and have the obvious estimate
\begin{equation}\label{AL1.1bis}
\int_0^H \rho(z) |\frac{d}{dz}\phi|^2\, dz
\;\le\;
 \int_0^H z |\frac{d}{dz}\phi|^2\, dz,
\end{equation}
because of $\rho(z)\le z$. Varying the weight is helpful, since it allows to treat both boundary layer and bulk estimates simultaneously, and thus avoids the appearance of boundary terms at $z=\delta$ (for which we do not know an appropriate estimate). Using the definition of $\phi$, we easily compute
\begin{eqnarray*}
\int_0^H \rho(z) |\frac{d}{dz}\phi|^2\, dz &=& \int_0^H \rho(z) \frac1{z^4}|\frac{d}{dz} \F w|^2\, dz- 4 \Re \int_0^H \rho(z) \frac1{z^5} \F w \overline{\frac{d}{dz} \F w}\, dz\\
&&\mbox{}  + 4 \int_0^H \rho(z) 
\frac1{z^6}|\F w|^2\, dz,
\end{eqnarray*}
where $\Re\, Z$ denotes the real part of a complex number $Z$. We see via integration by parts that
\begin{eqnarray*}
\lefteqn{\frac{\alpha}{\delta^{\alpha}}\int_0^{\delta} \frac1{z^{5-\alpha}} |\F w|^2\, dz}\\
 &=& \int_0^H \frac{d}{dz}\left(\frac{\rho(z)}z \right) \frac1{z^4} |\F w|^2\, dz\\
&=& 4 \int_0^H \rho(z) \frac1{z^6} |\F w|^2 \, dz-2 \Re\int_0^H \rho(z) \frac1{z^5} \F w \overline{\frac{d}{dz} \F w}\, dz,
\end{eqnarray*}
so that we may rewrite the above formula as
\begin{eqnarray}
\lefteqn{\int_0^H \rho(z) |\frac{d}{dz}\phi|^2\, dz}\nonumber\\
&=& \frac1{\delta^{\alpha}}\int_0^{\delta} \frac1{z^{3-\alpha}}|\frac{d}{dz} \F w|^2\, dz\nonumber\\
&&\mbox{}- \frac2{\delta^{\alpha}}\Re \int_0^{\delta} \frac1{z^{4-\alpha}} \F w \overline{\frac{d}{dz} \F w}\, dz  + \frac{\alpha}{\delta^{\alpha}}\int_0^{\delta} \frac1{z^{5-\alpha}}|\F w|^2\, dz\nonumber\\
&&\mbox{}+ \int_{\delta}^H \frac1{z^3}|\frac{d}{dz} \F w|^2\, dz - 2\Re\int_{\delta}^H \frac1{z^4} \F w \overline{\frac{d}{dz} \F w}\, dz.\label{B1}
\end{eqnarray}
We invoke the Cauchy-Schwarz and Young inequalities to deduce
\begin{eqnarray*}
\lefteqn{\int_0^H \rho(z) |\frac{d}{dz}\phi|^2\, dz}\\
&\ge& \frac{\alpha-1}{\delta^{\alpha}}\int_0^{\delta} \frac1{z^{5-\alpha}}|\F w|^2\, dz + \frac12\int_{\delta}^H \frac1{z^3}|\frac{d}{dz} \F w|^2\, dz - 2\int_{\delta}^H \frac1{z^5} |\F w|^2 \, dz,
\end{eqnarray*}
which becomes
\begin{eqnarray*}
\lefteqn{\frac{\alpha-1}{\delta^{\alpha}}\int_0^{\delta} \frac1{z^{5-\alpha}}|\F w|^2\, dz + \int_{\delta}^H \frac1{z^3}|\frac{d}{dz} \F w|^2\, dz}\\
&\lesssim& \int_0^H z |\frac{d}{dz}\phi|^2\, dz + \int_{\delta}^H  \frac{1}{z}|\phi|^2\, dz
\end{eqnarray*}
in view of the definition of $\phi$ and formula \eqref{AL1.1bis}. Now, we obtain control on the second term on the l.h.s.\ of \eqref{4} via \eqref{B1} and the Young inequality. This completes the proof of Lemma \ref{AL1}.
\end{proof}

The following proposition is a consequence of the Lemmas \ref{AL4}, \ref{AL6} \& \ref{AL1}.

\begin{prop}\label{AP1}
For any $0\le\delta\le H$ it holds
\[
\int_0^{\delta} \la |\grad_y v|^2 \ra\, dz + \int_0^{\delta} \la |\grad w|^2 \ra\, dz\;
\lesssim\;
\delta(\ln H)\Nu.
\]
Moreover, if $\frac1H\ll\delta\ll\ln H$ then
\[
\int_0^{\delta} \la |\partial_z v|^2\ra\, dz
\;\lesssim\;\delta(\ln^3 H)\Nu.
\]
\end{prop}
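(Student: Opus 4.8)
The plan is to reduce both estimates to bounds on the vertical velocity $w$ via the representation \eqref{AR3}, $v=(-\laplace_y)^{-1}\grad_y\partial_z w$. On the Fourier level --- recall that $w$ has vanishing horizontal mean by \eqref{Aeq2} and the boundary conditions, so that $\grad_y^{-1}$ is unambiguously defined --- this reads $\F v(k)=\frac{ik}{|k|^2}\frac{d}{dz}\F w(k)$, and Plancherel's theorem turns it into the pointwise-in-$z$ identities
\[
\la|\grad_y v|^2\ra=\la|\partial_z w|^2\ra,\qquad \la|\partial_z v|^2\ra=\la|\grad_y^{-1}\partial_z^2 w|^2\ra,\qquad \la|\partial_z^3 v|^2\ra=\la|\grad_y^{-1}\partial_z^4 w|^2\ra.
\]
The first estimate is then immediate: since $z\le\delta$ on $(0,\delta)$ and the integrands are nonnegative, Lemma \ref{AL4} gives $\int_0^\delta\la|\grad w|^2\ra\,dz\le\delta\int_0^H\frac{\la|\grad w|^2\ra}{z}\,dz\lesssim\delta(\ln H)\Nu$, while $\int_0^\delta\la|\grad_y v|^2\ra\,dz=\int_0^\delta\la|\partial_z w|^2\ra\,dz\le\int_0^\delta\la|\grad w|^2\ra\,dz$.

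For the second estimate the task is to bound $\int_0^\delta\la|\grad_y^{-1}\partial_z^2 w|^2\ra\,dz$, and the plan is an interpolation in the vertical variable between $\grad_y^{-1}\partial_z w$ (controlled near the plate by Lemma \ref{AL1}) and $\grad_y^{-1}\partial_z^4 w$ (controlled on $(0,H)$ by Lemma \ref{AL6}). It is essential that these two endpoint quantities carry the \emph{same horizontal differentiation order} as the target, so that no negative powers of $|k|$ --- i.e.\ no powers of $\Lambda$ --- are introduced. Concretely, for each fixed $k\ne0$ put $\psi(z):=\frac1{|k|}\frac{d}{dz}\F w(k)(z)$, so that $\psi,\psi',\psi'''$ are the Fourier coefficients of $\grad_y^{-1}\partial_z w$, $\grad_y^{-1}\partial_z^2 w$, $\grad_y^{-1}\partial_z^4 w$, respectively. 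Applying the one-dimensional Gagliardo--Nirenberg interpolation inequality on $(0,\delta)$,
\[
\int_0^\delta|\psi'|^2\,dz\lesssim\left(\int_0^\delta|\psi'''|^2\,dz\right)^{1/3}\left(\int_0^\delta|\psi|^2\,dz\right)^{2/3}+\frac1{\delta^2}\int_0^\delta|\psi|^2\,dz,
\]
then summing over $k$ and averaging in time --- using H\"older's inequality with exponents $3$ and $\tfrac{3}{2}$ in the $k$-sum and in the time average for the first term --- yields
\[
\int_0^\delta\la|\grad_y^{-1}\partial_z^2 w|^2\ra\,dz\lesssim\left(\int_0^\delta\la|\grad_y^{-1}\partial_z^4 w|^2\ra\,dz\right)^{1/3}\left(\int_0^\delta\la|\grad_y^{-1}\partial_z w|^2\ra\,dz\right)^{2/3}+\frac1{\delta^2}\int_0^\delta\la|\grad_y^{-1}\partial_z w|^2\ra\,dz.
\]

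It then remains to feed in the two inputs. From $\la|\grad_y^{-1}\partial_z^4 w|^2\ra=\la|\partial_z^3 v|^2\ra\le\la|\grad^3 u|^2\ra$ and Lemma \ref{AL6} one gets $\int_0^\delta\la|\grad_y^{-1}\partial_z^4 w|^2\ra\,dz\lesssim\Nu$; and Lemma \ref{AL1} with $\alpha=3$ (note $\delta\ll\ln H\ll H$, so the lemma applies), combined with \eqref{AR2} and the hypothesis $\delta\gg1/H$ --- which makes $\ln(H/\delta)\lesssim\ln H$ --- gives
\[
\int_0^\delta\la|\grad_y^{-1}\partial_z w|^2\ra\,dz\lesssim\delta^3\ln^2\!\Big(\frac H\delta\Big)\int_0^H\frac{\la Tw\ra}{z}\,dz\lesssim\delta^3\ln^3 H\,\Nu.
\]
Substituting,
\[
\int_0^\delta\la|\grad_y^{-1}\partial_z^2 w|^2\ra\,dz\lesssim\Nu^{1/3}\big(\delta^3\ln^3 H\,\Nu\big)^{2/3}+\frac1{\delta^2}\,\delta^3\ln^3 H\,\Nu=\delta^2\ln^2 H\,\Nu+\delta\ln^3 H\,\Nu,
\]
and since $\delta\ll\ln H$ the first summand is dominated by the second, so that $\int_0^\delta\la|\partial_z v|^2\ra\,dz=\int_0^\delta\la|\grad_y^{-1}\partial_z^2 w|^2\ra\,dz\lesssim\delta(\ln^3 H)\Nu$, as claimed.

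The step requiring most care is the interpolation: one must pick the two endpoint quantities so that they match the horizontal order of $\grad_y^{-1}\partial_z^2 w$ (otherwise $\Lambda$-dependent constants appear), retain the lower-order correction term $\delta^{-2}\int_0^\delta|\psi|^2$ in the bounded-interval Gagliardo--Nirenberg inequality --- it is precisely this term, estimated through the bulk-type bound of Lemma \ref{AL1} with $\alpha=3$, that produces the decisive extra power of $\delta$ --- and correctly interchange the $k$-summation and the time average with the fractional interpolation exponents via H\"older's inequality.
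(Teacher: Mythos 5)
Your proposal is correct and follows essentially the same route as the paper: the first bound via Lemma \ref{AL4} and the relation \eqref{AR3}, and the second by interpolating $\grad_y^{-1}\partial_z^2 w$ on $(0,\delta)$ between $\grad_y^{-1}\partial_z w$ (Lemma \ref{AL1} with $\alpha=3$ plus \eqref{AR2}) and $\grad_y^{-1}\partial_z^4 w$ (bounded by $\Nu$ via \eqref{5}). The only cosmetic difference is that you use the multiplicative Gagliardo--Nirenberg form (requiring the extra H\"older step in the $k$-sum and time average), whereas the paper uses the equivalent additive inequality $\int_0^\delta|f'|^2\,dz\lesssim\delta^{-2}\int_0^\delta|f|^2\,dz+\delta^4\int_0^\delta|f'''|^2\,dz$, which leads to the same conclusion under $\frac1H\ll\delta\ll\ln H$.
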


As an immediate consequence of the previous proposition together with the Nusselt number bound \eqref{110}, we observe for later citations that, in a boundary layer of {\em order one}, we have the estimates
\begin{equation}\label{Aeq:3}
\int_0^1 \la|u|^2\ra\, dz\;\le\; \int_0^{1} \la |\grad u|^2\ra\, dz
\;\lesssim\;\ln^{\gamma} H,
\end{equation}
for some $\gamma>0$. Here the first estimate is due to Poincar\'e's inequality.

\begin{proof}[Proof of Proposition \ref{AP1}]
The bounds on $\grad w$ and $\grad_y v$, follow directly from Lemma \ref{AL4} via
\begin{equation}\nonumber
\int_0^{\delta} \la |\grad w|^2\ra\, dz\; \le\; \delta \int_0^{H}\frac{ \la |\grad w|^2\ra}z\, dz\; \stackrel{\eqref{AL4.1}}{\lesssim}\; \delta(\ln H)\Nu
\end{equation}
and
\[
\int_0^{\delta} \la |\grad_y v|^2 \ra\, dz
\; \stackrel{\eqref{AR3}}{\le}\;
\int_0^{\delta} \la |\grad_y^{-1}\grad_y\partial_zw|^2 \ra\, dz
\;=\;
\int_0^{\delta} \la (\partial_zw)^2 \ra\, dz.
\]
It remains to estimate the $\partial_z v$-term. Because of \eqref{AR3}, we have to show
\begin{equation}\label{AP1.0bis}
\int_0^{\delta} \la |\grad_y^{-1} \dz^2 w|^2 \ra\, dz\;
\lesssim\;\delta(\ln^3 H)\Nu.
\end{equation}
First, we apply the elementary estimate
\[
\int_0^{\delta} |\frac{d}{dz} f|^2\, dz
\;\lesssim\;
\frac1{\delta^2}\int_0^{\delta} |f|^2\, dz + \delta^4 \int_0^{\delta} |\frac{d^3}{dz^3} f|^2\, dz
\]
to $f= \grad_y^{-1}\del_z w$ and obtain
\[
\int_0^{\delta} \la |\grad_y^{-1} \dz^2 w|^2 \ra\, dz
\;\lesssim\;
\frac1{\delta^2}\int_0^{\delta} \la |\grad_y^{-1} \dz w|^2 \ra\, dz
+ \delta^4 \int_0^{\delta} \la |\grad_y^{-1} \dz^4 w|^2 \ra\, dz.
\]
For the first term on the r.\ h.\ s.\ of the above equation, we invoke the boundary layer estimate from Lemma \ref{AL1} with $\alpha=3$,
\begin{eqnarray*}
\int_0^{\delta} \la |\grad_y^{-1}\dz w|^2\ra \, dz
&\stackrel{\eqref{AL1.6}}{\lesssim}&
\delta^3\ln^2 \left(\frac{H}{\delta}\right) \int_0^H \frac{\la T w \ra}z\, dz\\
&\stackrel{\eqref{AR2}}{\lesssim}& \delta^3\ln^2 \left(\frac{H}{\delta}\right) (\ln H)\Nu\\
&\stackrel{\frac1H\ll \delta}{\lesssim}&
\delta^3 (\ln^3 H)\Nu.
\end{eqnarray*}
The second term is estimated with the help of Lemma 3 in \cite{OS11}:
\begin{equation}\nonumber
\int_0^H \la|\grad_y^{-1}\del_z^4 w|^2\ra\, dz
\;\stackrel{\eqref{5}}{\lesssim}\; \Nu.
\end{equation}
We combine the above estimates and deduce \eqref{AP1.0bis} since $\delta\ll\ln H$.
\end{proof}

We conclude this section with a global bound on the velocity field, which combines classical Calderon--Zygmund theory with the maximum principle for the temperature.

\begin{lemma}\label{AL3}
For any $1<q<\infty$, it holds
\begin{equation}\label{AL3.1}
\frac1{H^q}\int_0^H \la|\grad u|^q\ra\, dz\;\lesssim\; \int_0^H \la |\grad^2 u|^q\ra\, dz
\;\lesssim\;
H.
\end{equation}
\end{lemma}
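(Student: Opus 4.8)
\emph{Plan.} The left inequality in \eqref{AL3.1} is a Poincar\'e estimate built on the no-slip conditions \eqref{eq:9}, and the right inequality is the maximal $L^q$-regularity (Calder\'on--Zygmund) estimate for the stationary Stokes system \eqref{Aeq2}--\eqref{Aeq3} combined with the maximum principle for the temperature.

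For the left inequality I would argue pointwise in $(t,y)$ and then average in the horizontal variables and time. Using $|\grad u|^q\lesssim_{d,q}\sum_{i,j}|\partial_j u_i|^q$, I separate the horizontal derivatives ($j\le d-1$) from the vertical one. For $j\le d-1$ the boundary condition $u=0$ on $z\in\{0,H\}$ forces $\partial_j u_i=0$ there as well, so the one-dimensional Poincar\'e inequality on $(0,H)$ gives $\int_0^H|\partial_j u_i|^q\,dz\lesssim H^q\int_0^H|\dz\partial_j u_i|^q\,dz$. For the vertical derivative I use instead that $\int_0^H\dz u_i\,dz=u_i|_{z=H}-u_i|_{z=0}=0$, so the Poincar\'e--Wirtinger inequality gives $\int_0^H|\dz u_i|^q\,dz\lesssim H^q\int_0^H|\dz^2 u_i|^q\,dz$. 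In either case the right-hand side is $\le H^q\int_0^H|\grad^2 u|^q\,dz$; summing over $i,j$ and averaging in $y$ and $t$ yields $\frac1{H^q}\int_0^H\la|\grad u|^q\ra\,dz\lesssim\int_0^H\la|\grad^2 u|^q\ra\,dz$.

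For the right inequality I would invoke maximal $L^q$-regularity for Stokes. Freezing time, $u(t,\cdot)$ solves \eqref{Aeq2}--\eqref{Aeq3} with source $T(t,\cdot)\,e$ and no-slip data on the plates, so the Calder\'on--Zygmund estimate for the stationary Stokes system gives
\[
\int_0^H\!\!\int_{[0,\Lambda)^{d-1}}|\grad^2 u|^q\,dy\,dz\;\lesssim\;\int_0^H\!\!\int_{[0,\Lambda)^{d-1}}|T|^q\,dy\,dz ,
\]
with a constant that one must take \emph{uniform in $H$ (and $\Lambda$)}. The maximum principle for the advection--diffusion equation \eqref{Aeq1} with boundary values $0$ and $1$ gives $0\le T\le1$, so the right-hand side is at most $\Lambda^{d-1}H$; dividing by $\Lambda^{d-1}$ and averaging in $t$ then yields $\int_0^H\la|\grad^2 u|^q\ra\,dz\lesssim H$.

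The main obstacle is precisely the uniformity of the Stokes constant in the container height, since textbook bounded-domain estimates carry domain-dependent constants. I would remove the $H$-dependence by the elliptic rescaling $u(x)\mapsto H^{-1}u(H\,\cdot)$, $p(x)\mapsto p(H\,\cdot)$, $(Te)(x)\mapsto H(Te)(H\,\cdot)$, under which \eqref{Aeq2}--\eqref{Aeq3} and the no-slip conditions are invariant and under which the two sides of the displayed inequality scale identically; this reduces the estimate to the unit-height slab, where it is the standard interior plus flat-boundary Calder\'on--Zygmund estimate for Stokes, with a constant depending only on $d$ and $q$ and, by the translation invariance of the flat plates, not on the horizontal period. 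Alternatively one may cite the known scale-invariant $L^q$ Stokes estimates on the infinite slab $\R^{d-1}\times(0,H)$ and transfer them to the periodic setting. A minor, routine point is the justification that the horizontal and time averages $\la\,\cdot\,\ra$ may be moved through the $z$-integral in the two steps above.
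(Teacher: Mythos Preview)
Your proposal is correct and follows the same strategy as the paper: Poincar\'e for the left inequality, Stokes maximal $L^q$-regularity combined with the maximum principle $|T|\le 1$ for the right, with a rescaling to unit height to remove the $H$-dependence. Your treatment of the left inequality is in fact more careful than the paper's, which simply invokes Poincar\'e ``for $u$ and $\nabla u$'' without separating the vertical derivative (whose trace need not vanish) and handling it via Poincar\'e--Wirtinger as you do.

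The one place your sketch is thinner than the paper is the uniformity of the Stokes constant in the horizontal period after rescaling to $H=1$. Your phrase ``by the translation invariance of the flat plates'' is the right intuition but hides real work: the paper makes this precise by multiplying $(u,p)$ by a smooth horizontal cutoff $\eta(y)$ supported in a box of side $\sim N\Lambda$, applying the known infinite-strip estimate \eqref{AL3.5} to $(\eta u,\eta p)$, and then using periodicity together with Poincar\'e (for $u$, $\nabla u$, and for $p$ after fixing its additive constant) to absorb the commutator terms on the annular region $\{N\Lambda\le|y|_\infty\le N\Lambda+1\}$ back into the left-hand side for $N$ large. Any covering/localization argument you use in place of this will generate analogous lower-order terms that must likewise be absorbed.
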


\begin{proof}[Proof of Lemma \ref{AL3}] The first estimate in \eqref{AL3.1} follows immediately from the Poincar\'e inequality and the boundary conditions \eqref{eq:9}. The second statement follows from $L_q$ maximal regularity estimates for the Stokes equation,
\begin{equation}\label{AL3.2}
\int_0^H \la |\grad^2 u|^q\ra\, dz
\;\lesssim\; \int_0^H \la |T|^q\ra\, dz,
\end{equation}
and the maximum principle for the temperature in the sense that
\[
\la \sup_x |T|^q\ra\; \le\; 1,
\]
cf.\ \eqref{AT1.4} below and discussion on page \pageref{AT1.4}. Although \eqref{AL3.2} is a very classical estimate, we have to argue that the estimate is uniform in the aspect ratio $\frac{H}{\Lambda}$. By rescaling $x$, $u$, and $p$, and redefining $\Lambda$, we may w.l.o.g.\ assume that $H=1$. Furthermore, since the pressure is unique up to a constant, we suppose that
\begin{equation}\label{AL3.2bis}
\int_0^1\int_{(0,2)^{d-1}}p\, dydz\; =\; 0.
\end{equation}
Our starting point is the analogue result for the Stokes equation on an infinite strip. Consider
\begin{eqnarray}
-\laplace \tilde u + \grad\tilde p&=& f,\label{AL3.3}\\
\div \tilde u &=& g\label{AL3.4}
\end{eqnarray}
in $\R^{d-1}\times (0,1)$ with non-slip boundary conditions at the vertical boundary: $\tilde u=0$ for $z\in\{0,1\}$. Then we have
\begin{eqnarray}
\lefteqn{\int_0^1\int_{\R^{d-1}}|\grad^2 \tilde u|^q\, dydz
 + \int_0^1\int_{\R^{d-1}}|\grad \tilde p|^q\, dydz}\nonumber\\
&\lesssim&
\int_0^1\int_{\R^{d-1}}|f|^q\, dydz 
+\int_0^1\int_{\R^{d-1}}|\grad g|^q\, dydz \label{AL3.5}
\end{eqnarray}
by classical theory, cf.\ \cite[Chapter IV]{Galdi}, as long as the r.h.s.\ is finite. In order to apply this estimate, we introduce a smooth cut-off function $\eta=\eta(y)$ with the properties that $\eta\in[0,1]$, $\eta=1$ for $|y|_{\infty}\le N\Lambda$, $\eta=0$ for $|y|_{\infty}\ge N\Lambda+1$, and $\sup_y |\grad_y \eta|\lesssim1$, $\sup_y |\grad_y^2 \eta|\lesssim1$. Here, $N\in\N$ is an arbitrary, but fixed, number that has to be chosen explicitly at the end of the proof. It is readily checked that
\[
\tilde u=\eta u, \quad \tilde p=\eta p, \quad f=\eta Te - \laplace_y \eta u - 2\grad_y\eta\cdot\grad_y u + {\grad_y \eta \choose 0} p, \quad g=  \grad_y\eta\cdot v
\]
solve \eqref{AL3.3}\&\eqref{AL3.4}. In particular, if we use the cut-off properties of $\eta$, we deduce form \eqref{AL3.5} the estimate
\begin{eqnarray*}
\lefteqn{\int_0^1 \int_{|y|_{\infty}\le N \Lambda} |\grad^2 u|^q\, dydz + \int_0^1 \int_{|y|_{\infty}\le N \Lambda} |\grad p|^q\, dydz}\\
&\lesssim&
\int_0^1\int_{|y|_{\infty}\le N \Lambda+1 } |T|^q\, dydz\\
&&\mbox{} + \int_0^1\int_{N \Lambda\le |y|_{\infty}\le N \Lambda+1}( |u |^q  + |\grad u|^q + |p|^q)\, dydz.
\end{eqnarray*}
In view of the periodicity of $T$, $u$, and $p$ in $y$, we may rewrite the above equation as
\begin{eqnarray*}
\lefteqn{N^{d-1}\int_0^1 \int_{[0,\Lambda)^{d-1}} |\grad^2 u|^q\, dydz + N^{d-1}\int_0^1 \int_{[0,\Lambda)^{d-1}} |\grad p|^q\, dydz}\\
&\lesssim&
N^{d-1}\int_0^1\int_{[0,\Lambda)^{d-1}} |T|^q\, dydz + \int_0^1\int_{(0,2)^{d-1}} |T|^q\, dydz\\
&&\mbox{} +
\int_0^1\int_{(0,2)^{d-1}}( |u |^q  + |\grad u|^q + |p|^q)\, dydz.
\end{eqnarray*}
Notice that, by the homogeneous boundary conditions of $u$ at $z\in\{0,H\}$, we can apply the Poincar\'e inequality in the vertical variable both for $u$ and $\grad u$ and obtain
\[
\int_0^1\int_{(0,2)^{d-1}} |u |^q\,  dydz
\;\lesssim\;  \int_0^1\int_{(0,2)^{d-1}} |\grad u |^q \, dydz
\;\lesssim\;  \int_0^1\int_{(0,2)^{d-1}} |\grad^2 u |^q \, dydz.
\]
Moreover, thanks to \eqref{AL3.2bis}, the Poincar\'e inequality in $y$ yields
\[
\int_0^1\int_{(0,2)^{d-1}} |p |^q\,  dydz
\;\lesssim\;
\int_0^1\int_{(0,2)^{d-1}} |\grad p |^q \, dydz.
\]
We conclude that
\begin{eqnarray*}
\lefteqn{N^{d-1}\int_0^1 \int_{[0,\Lambda)^{d-1}} |\grad^2 u|^q\, dydz + N^{d-1}\int_0^1 \int_{[0,\Lambda)^{d-1}} |\grad p|^q\, dydz}\\
&\lesssim&
N^{d-1}\int_0^1\int_{[0,\Lambda)^{d-1}} |T|^q\, dydz+\int_0^1\int_{(0,2)^{d-1}} |T|^q\, dydz\\
&&\mbox{} + 
\int_0^1\int_{(0,2)^{d-1}}|\grad^2 u |^q\, dydz
+\int_0^1\int_{(0,2)^{d-1}}|\grad p |^q\, dydz.
\end{eqnarray*}
Thus, choosing $N$ sufficiently large ($N\Lambda\gtrsim 1$ if $\Lambda\ll1$), we may absorb velocity and pressure terms into the l.\ h.\ s. Normalizing by the system size and averaging in time, this proves \eqref{AL3.2}.
\end{proof}

\section{Estimates on the temperature field}\label{S:Proof}

In this section, we prove Theorem \ref{AT1} and \ref{AT1bis}. We first show, how Theorem \ref{AT1} implies Theorem \ref{AT1bis}.

\begin{proof}[Proof of Theorem \ref{AT1bis}] The proof is elementary. Observe first, that
\begin{equation}\label{eq:6}
\left|\la \del^3_{z|z=0} T \ra\right|\; \lesssim\; \ln^{\gamma} H.
\end{equation}
Indeed, this is a direct consequence of the Poincar\'e and H\"older inequality and \eqref{AT1.2} with $\alpha=3$ and $\alpha=4$:
\begin{eqnarray*}
\left|\la \del^3_{z|z=0} T\ra\right| &\le& \int_0^1 \la |\del^3_z T| \ra\, dz + \int_0^1 \la |\del^3_{z|z=0} T - \del^3_z T|\ra\, dz\\
&\le& \left(\int_0^1 \la |\del^3_z T|^{4/3}\ra\, dz\right)^{3/4} +  \int_0^1 \la |\del_z^4 T|\ra\, dz\\
&\stackrel{\eqref{AT1.2}}{\lesssim}& \ln^{\gamma} H.
\end{eqnarray*}

\medskip

By Tailor expansion, we may write the average temperature as
\begin{eqnarray*}
\la  T_{|z}\ra &=& \la T_{|z=0}\ra + z \la \partial_{z|z=0}T\ra + \frac12 z^2  \la \partial_{z|z=0}^2 T\ra \\
&&\mbox{} + \frac16 z^3 \la \partial_{z|z=0}^3 T\ra+  \frac1{24} \int_0^z \la \partial_{z}^4 T\ra (z-\tilde z)^3\, d\tilde z.
\end{eqnarray*}
We first identify the expressions on the right up to the quadratic term with the help of the boundary conditions \eqref{eq:9}: We immediately see that $\la T_{|z=0}\ra=1$, the Nusselt number representation \eqref{AN5} evaluated at $z=0$ yields  $\la \partial_{z|z=0} T\ra=-\Nu$, and exploiting \eqref{eq:9} in \eqref{Aeq1} again, we see that $\la \partial_{z|z=0}^2 T\ra=0$. Thus, the above identity turns into
\[
\la  T_{|z}\ra - \left(1-z\Nu\right)\;=\; \frac16 z^3 \la \partial_{z|z=0}^3 T\ra+  \frac1{24} \int_0^z \la \partial_{z}^4 T\ra (z-\tilde z)^3\, d\tilde z.
\]
To bound the terms on the r.h.s., we invoke \eqref{eq:6} and estimate
\[
 \left|\int_0^z \la \partial_{z}^4 T\ra (z-\tilde z)^3\, d\tilde z\right|\;\lesssim\; z^3 \int_0^1 \la |\partial_{z}^4 T|\ra \, d z\;\stackrel{\eqref{AT1.2}}{\lesssim}\; z^3\ln^{\gamma} H.
\]
This proves Theorem \ref{AT1bis}.
\end{proof}

For the proof of Theorem \ref{AT1}, we need some preparations. We start with three interpolation inequalities, stated in the Lemmas \ref{L9} and \ref{L10}. 

\begin{lemma}\label{L9}
Let $\zeta$ denote a periodic function with vertical period $2H$. Then
\begin{equation}\label{L9.1}
\left(\int_{-H}^H \la |\grad \zeta|^{4/3}\ra\, dz\right)^{3/4}\; \lesssim\; \left(\int_{-H}^H\la \zeta^2\ra\, dz\right)^{1/4} \left(\int_{-H}^H\la |\grad^2 \zeta|\ra\, dz\right)^{1/2}.
\end{equation}
\end{lemma}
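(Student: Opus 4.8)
\textbf{Proof proposal for Lemma \ref{L9}.}

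The plan is to reduce the vector-valued Gagliardo--Nirenberg-type inequality \eqref{L9.1} to a one-dimensional statement, since the horizontal directions carry periodic boundary data and the only genuinely non-periodic direction has been symmetrized by passing to vertical period $2H$. First I would recall the classical Gagliardo--Nirenberg interpolation inequality on a space without boundary (here $\mathbb{T}^{d-1}\times(\mathbb{R}/2H\mathbb{Z})$): for a scalar field $\zeta$,
\[
\|\grad\zeta\|_{L^{4/3}}\;\lesssim\;\|\zeta\|_{L^2}^{1/3}\,\|\grad^2\zeta\|_{L^1}^{2/3},
\]
which is exactly \eqref{L9.1} after raising to the power $3/4$ and noting that $\tfrac14=\tfrac13\cdot\tfrac34$ and $\tfrac12=\tfrac23\cdot\tfrac34$, so the exponents match. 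The scaling check $[\grad\zeta]$ in $L^{4/3}$ over $\Rn$-dimension $n=d$: the exponent relation $\frac1{4/3}=\frac\theta2+(1-\theta)(\frac11-\frac1n)$ together with $1=\theta\cdot 0+(1-\theta)\cdot 2$ forces $\theta=\tfrac13$, consistent with the claim and, reassuringly, \emph{independent of $n$}, so the inequality has the same form in every dimension. The constant depends only on $d$, not on $H$ or $\Lambda$, because on a torus one may prove it from the Sobolev embedding $\dot W^{2,1}\hookrightarrow \dot W^{1,n/(n-1)}$ and an interpolation $L^{4/3}=[L^2,L^{n/(n-1)}]_{1/3}$ — here I am using $4/3<n/(n-1)$ for $n=d\ge 2$, which holds since $\tfrac43\le\tfrac{3}{2}$; if $d=2$ equality $n/(n-1)=2$ occurs and the Sobolev step must be replaced by the borderline $W^{2,1}\hookrightarrow W^{1,2}$ in two dimensions, still valid.

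The second step is to insert the time average. Writing $\la\,\cdot\,\ra$ as the $\limsup$ of time-space averages, I would apply the pointwise-in-time inequality
\[
\Big(\int_{-H}^H \tfrac1{\Lambda^{d-1}}\!\!\int |\grad\zeta|^{4/3}\,dy\,dz\Big)^{3/4}
\;\lesssim\;
\Big(\int_{-H}^H\!\!\int \zeta^2\Big)^{1/4}\Big(\int_{-H}^H\!\!\int |\grad^2\zeta|\Big)^{1/2},
\]
then use H\"older in time with exponents matched to the powers $\tfrac14+\tfrac12=\tfrac34$ on the right versus $\tfrac34$ on the left: the left side is an $L^{4/3}_t$-type average raised to $3/4$, and H\"older gives $\la A^{3/4}\ra\le\la A\rangle^{3/4}$ is false in general, so instead I would apply H\"older directly to the product, namely
\[
\la fg\ra\;\le\;\la f^{1/(1/4)}\rangle^{1/4}\,\la g^{1/(1/2)}\rangle^{1/2}
\]
with $f$ the quarter-power factor and $g$ the half-power factor, which is H\"older with exponents $4$ and $2$ (note $\tfrac14+\tfrac12=\tfrac34\le 1$, so one actually has a third slack exponent, i.e.\ the inequality is not tight and Jensen absorbs the defect). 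Combining the two displays yields \eqref{L9.1}.

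The only genuine obstacle is the first step: making sure the Gagliardo--Nirenberg constant is uniform in the domain, i.e.\ independent of $H$ and $\Lambda$. This is where one must be slightly careful — the naive torus proof gives a constant depending on the torus size through the Poincar\'e/Sobolev constants. The fix is that \eqref{L9.1} is \emph{scale invariant} (both sides scale the same way under $y\mapsto\lambda y$, $z\mapsto\lambda z$ combined with the dimensional count above), so one may rescale to a fixed reference torus of size $O(1)$, prove the inequality there with an absolute constant, and scale back; the scale invariance guarantees the constant does not deteriorate. I expect this rescaling argument, rather than the interpolation bookkeeping, to be the part requiring the most care in a fully written-out proof. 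Everything else — the choice of interpolation exponents, the H\"older step in time — is routine.
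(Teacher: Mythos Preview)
Your approach has the right flavor --- (\ref{L9.1}) is indeed a Gagliardo--Nirenberg interpolation inequality, scale-invariant in any dimension, and the time average can be passed through by H\"older exactly as you suggest --- but the execution contains a genuine error and a real gap.

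First, the bookkeeping is wrong. In standard norm notation, (\ref{L9.1}) reads $\|\nabla\zeta\|_{L^{4/3}}\lesssim\|\zeta\|_{L^2}^{1/2}\|\nabla^2\zeta\|_{L^1}^{1/2}$, not with exponents $1/3,2/3$; note that $(\int\zeta^2)^{1/4}=\|\zeta\|_{L^2}^{1/2}$, not $\|\zeta\|_{L^2}^{1/4}$. Your ``raising to the power $3/4$'' step conflates $\|\cdot\|_{L^p}$ with $\int|\cdot|^p$. Consistently, your own derivative-count equation $1=\theta\cdot0+(1-\theta)\cdot2$ forces $\theta=1/2$, not $1/3$; and your Sobolev/interpolation sketch ($\dot W^{2,1}\hookrightarrow\dot W^{1,n/(n-1)}$ followed by $[L^2,L^{n/(n-1)}]_{1/3}$) produces the claimed $L^{4/3}$ only for a single dimension, not for all $d$. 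With the correct exponents $1/2,1/2$ the inequality is the ``convexity'' case of Gagliardo--Nirenberg ($\tfrac{2}{4/3}=\tfrac12+\tfrac11$), valid and scale-invariant in every dimension.

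Second, and more seriously, your uniformity argument does not survive: the domain is the \emph{anisotropic} torus $[0,\Lambda)^{d-1}\times[-H,H)$, and an isotropic rescaling cannot map it to a fixed reference torus. Anisotropic rescaling, on the other hand, destroys the Euclidean norm $|\nabla\zeta|$ and hence the inequality. So ``rescale to size $O(1)$ and scale back'' does not prove uniformity in both $H$ and $\Lambda$, which you yourself flagged as the delicate point. The paper resolves exactly this issue by reducing to a one-dimensional statement (apply the inequality direction by direction to $\partial_i\zeta$, then integrate over the remaining variables and use H\"older with exponents $3,3/2$); in one variable there is a single length scale, scale invariance does the job, and the $1$D estimate itself is obtained from an Ehrling inequality $\|\zeta'\|_{L^{4/3}}\lesssim\|\zeta\|_{L^2}+\|\zeta''\|_{L^1}$ on unit intervals plus summation. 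Your proposal would become correct if you replaced the global rescaling by this coordinate-wise reduction.
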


\begin{proof}[Proof of Lemma \ref{L9}] We first observe that it is enough to prove estimate \eqref{L9.1} in the case where $\zeta = \zeta(s)$ is a periodic function on some interval $[0,a]$, i.e.,
\[
\left(\int_0^a |\zeta'|^{4/3}\, ds\right)^{3/4}\;\lesssim\; \left(\int_0^a\zeta^2\, ds\right)^{1/4}\left(\int_0^a |\zeta''|\, ds\right)^{1/2}.
\]
The general case follows (roughly speaking) when choosing $s=x_i$, integrating over the remaining $d-1$ variables, and summing over all $i$. Moreover, the above statement is a consequence of the estimate
\begin{equation}\label{L9.2}
\left(\int_0^b |\zeta'|^{4/3}\, ds\right)^{3/4}\;\lesssim\;   \left(\int_0^b\zeta^2\, ds\right)^{1/2} + \int_0^b |\zeta''|\, ds\quad\mbox{for all $b>0$,}
\end{equation}
as one can see by rescaling length by $a/b$ and optimizing in $b$. We now turn to the proof of \eqref{L9.2}. If $b\ll1$, then \eqref{L9.2} reduces to a Poincar\'e-type inequality. Indeed, making use of the periodic boundary conditions of $\zeta$, we estimate
\[
\left(\int_0^b |\zeta'|^{4/3}\, ds\right)^{3/4}\;\le\; b^{3/4} \sup_s |\zeta'|\;\le\; b^{3/4} \int_0^b |\zeta''|\, ds\;\stackrel{b\ll 1}{\le}\; \int_0^b |\zeta''|\, ds.
\]
Observe that \eqref{L9.2} is a standard Ehrling estimate in the case where $b\sim 1$ --- regardless of the actual boundary conditions. Therefore, if $b\gg1$, we can invoke a summation argument based on the ``$b\sim 1$''-case. For instance, if $b=K\in\N$, we have
\begin{eqnarray*}
\left(\int_0^K |\zeta'|^{4/3}\, ds\right)^{3/4} &=& \sum_{k=1}^K \left(\int_{k-1}^k |\zeta'|^{4/3}\, ds\right)^{3/4}\\
&\lesssim& \sum_{k=1}^K\left( \left(\int_{k-1}^k\zeta^2\, ds\right)^{1/2} + \int_{k-1}^k |\zeta''|\, ds\right)\\
&=& \left(\int_0^K\zeta^2\, ds\right)^{1/2} + \int_0^K |\zeta''|\, ds.
\end{eqnarray*}
This proves Lemma \ref{L9}.
\end{proof}

The following estimates are designed to make use of the maximum principle for the temperature field $|T|\le1$, cf.\ \eqref{AT1.4} below.

\begin{lemma}\label{L10}
Let $\zeta$ denote a periodic function with vertical period $2H$. Then

\begin{equation}\label{L10.2}
\int_{-H}^H \la |\grad\zeta|^4\ra\, dz \;\lesssim\;\sup_x |\zeta|^2 \int_{-H}^H \la |\grad^2\zeta|^2\ra\, dz
\end{equation}
and
\begin{equation}\label{L10.1}
\int_{-H}^H \la |\grad^2 \zeta|^2\ra\, dz \;\lesssim\;\sup_x |\zeta|^{2/3} \int_{-H}^H \la  |\grad^3 \zeta|^{4/3}\ra\, dz.
\end{equation}
\end{lemma}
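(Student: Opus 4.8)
The plan is to prove the two interpolation estimates \eqref{L10.2} and \eqref{L10.1} by reducing them to one-dimensional statements and then interpolating, exactly in the spirit of the proof of Lemma \ref{L9}. For \eqref{L10.2}, the cleanest route is the pointwise Gagliardo--Nirenberg-type inequality: for a sufficiently smooth periodic function $\zeta$ of one variable $s$ on $[0,a]$ one has $\int_0^a |\zeta'|^4\, ds \lesssim \sup_s|\zeta|^2 \int_0^a |\zeta''|^2\, ds$. This follows from integration by parts, $\int |\zeta'|^4 = -3\int |\zeta'|^2 \zeta'' \zeta\, ds \le 3\sup|\zeta| \int |\zeta'|^2 |\zeta''|\, ds$, and then Cauchy--Schwarz gives $\int|\zeta'|^4 \le 3\sup|\zeta| (\int |\zeta'|^4)^{1/2}(\int|\zeta''|^2)^{1/2}$, which rearranges to the claim. (One should argue first that $\int|\zeta'|^4$ is finite — either by approximation or by first restricting to subintervals of scale one as in Lemma \ref{L9}.) To pass from the scalar inequality to \eqref{L10.2}, I would apply it with $s = x_i$ along each coordinate line, integrate over the remaining $d-1$ variables and over time, sum over $i$, and use that $\sup_x|\zeta|^2$ dominates the slice suprema while $\sum_i \int |\del_i^2 \zeta|^2 \le \int |\grad^2\zeta|^2$ and $\sum_i\int|\del_i\zeta|^4 \gtrsim \int|\grad\zeta|^4$ up to dimensional constants; care with the mixed terms of $|\grad\zeta|^4 = (\sum_i |\del_i\zeta|^2)^2$ is handled by the elementary bound $(\sum a_i)^2 \lesssim \sum a_i^2 \cdot (\#)$ together with $a_i a_j \le \frac12(a_i^2+a_j^2)$, so only the pure fourth powers $\int|\del_i\zeta|^4$ actually need to be controlled.

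For \eqref{L10.1} the scalar model is $\int_0^a |\zeta''|^2\, ds \lesssim \sup_s |\zeta|^{2/3} \int_0^a |\zeta'''|^{4/3}\, ds$. Here I would integrate by parts once, $\int |\zeta''|^2\, ds = -\int \zeta''' \zeta'\, ds \le (\int|\zeta'''|^{4/3})^{3/4}(\int |\zeta'|^4)^{1/4}$, and then insert estimate \eqref{L10.2} in its one-dimensional form $\int|\zeta'|^4 \lesssim \sup|\zeta|^2\int|\zeta''|^2$ into the last factor. This yields $\int|\zeta''|^2 \lesssim (\int|\zeta'''|^{4/3})^{3/4}\sup|\zeta|^{1/2}(\int|\zeta''|^2)^{1/4}$, and raising to the appropriate power and rearranging gives $\int|\zeta''|^2 \lesssim \sup|\zeta|^{2/3}(\int|\zeta'''|^{4/3})$. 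As before, the passage to $d$ dimensions is by slicing along coordinate directions, integrating out the other variables and time, and summing over $i$; the only subtlety is again combinatorial, bounding $|\grad^2\zeta|^2$ by a sum of pure second derivatives along coordinate lines plus mixed second derivatives, where the mixed terms $\del_i\del_j\zeta$ are estimated by applying the one-dimensional inequality in the $x_i$-variable to the function $\del_j\zeta$ (whose vertical period is still $2H$) so that a third derivative $\del_i^2\del_j\zeta$ or $\del_i\del_j^2\zeta$ appears, all of which are components of $\grad^3\zeta$.

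The main obstacle I anticipate is not the interpolation itself — the scalar inequalities are short — but the bookkeeping in reducing the multidimensional gradient norms to sums of directional derivative norms with the correct powers, particularly handling mixed partials in \eqref{L10.1} so that every term that appears on the right is genuinely a component of $\grad^3\zeta$ and every term on the left is dominated by $|\grad^2\zeta|^2$. One also has to make sure the finiteness/approximation argument (needed to justify integration by parts on the periodic cell without boundary terms, and to know the integrals one divides by are finite) is in place; restricting to unit-scale subintervals and summing, as in the proof of Lemma \ref{L9}, circumvents any issue of large-scale integrability. Once these routine points are dispatched, both \eqref{L10.2} and \eqref{L10.1} drop out.
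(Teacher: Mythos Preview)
Your proposal is correct and follows essentially the same route as the paper: integrate by parts in a single coordinate direction, apply H\"older (or Cauchy--Schwarz), and for \eqref{L10.1} feed \eqref{L10.2} into the resulting $\int|\del_i\zeta|^4$ factor. The paper does not literally reduce to a one-dimensional statement first but integrates by parts in $x_i$ over the full periodic cell, which after Fubini is exactly your slicing-and-integrating-out procedure.

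The one place where your description does not quite work as written is the treatment of mixed second derivatives in \eqref{L10.1}. Applying the one-dimensional inequality in $x_i$ to the function $\del_j\zeta$ would put $\sup|\del_j\zeta|^{2/3}$ on the right-hand side, not $\sup|\zeta|^{2/3}$, so that route does not close. The paper's fix is simpler: by periodicity, two integrations by parts give
\[
\int_{-H}^H \la (\del_i\del_j\zeta)^2\ra\, dz \;=\; \int_{-H}^H \la \del_i^2\zeta\,\del_j^2\zeta\ra\, dz \;\le\; \tfrac12\int_{-H}^H \la (\del_i^2\zeta)^2\ra\, dz + \tfrac12\int_{-H}^H \la (\del_j^2\zeta)^2\ra\, dz,
\]
so mixed terms are controlled by pure ones and only $\int(\del_i^2\zeta)^2$ needs to be bounded, which your argument already does.
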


\begin{proof}[Proof of Lemma \ref{L10}.]
We start with estimate \eqref{L10.2} and fix $i\in\{1,\dots,d\}$. Integration by parts and applying the H\"older inequality yields
\begin{eqnarray*}
\int_{-H}^H \la (\del_i \zeta)^4\ra\, dz &=& -3\int_{-H}^H \la \zeta (\del_i\zeta)^2\del_i^2\zeta\ra\, dz\\
&\lesssim& \sup_x |\zeta|\left(\int_{-H}^H \la (\del_i\zeta)^4\ra\, dz\right)^{1/2}\left(\int_{-H}^H \la (\del_i^2\zeta)^2\ra\, dz\right)^{1/2}.
\end{eqnarray*}
It remains to apply the Young inequality and sum over all $i$ to deduce \eqref{L10.2}.

\medskip

Now, we turn to \eqref{L10.1}. Again, it is enough to show this statement for $\grad^2 \zeta$ replaced by $\partial_i^2\zeta$. The estimate for mixed second derivatives follows via integration by parts and the Cauchy--Schwarz inequality. The proof is very similar to the one of \eqref{L10.2}. We integrate by parts and use the H\"older inequality:
\begin{eqnarray*}
\int_{-H}^H \la (\del_i^2 \zeta)^2\ra\, dz& =& -\int_{-H}^H \la\del_i \zeta \del_i^3\zeta\ra\, dz\\
&\le& \left(\int_{-H}^H \la (\del_i\zeta)^4\ra\, dz\right)^{1/4}\left(\int_{-H}^H \la |\del_i^3 \zeta|^{4/3}\ra\, dz\right)^{3/4}.
\end{eqnarray*}
In order to obtain \eqref{L10.1}, we just insert \eqref{L10.2} (with $\grad^2\zeta$ replaced by $\del_i^2\zeta$ as in the derivation of \eqref{L10.2}) in the above estimate.
\end{proof}

To study the convection-diffusion equation \eqref{Aeq1}, it is convenient to rewrite this equation in terms of $\theta=T-(1-z/H)$, that is,
\begin{equation}\label{AT1.1}
\del_t \theta + u\cdot \grad \theta - \laplace \theta\; =\; \frac1H w.
\end{equation}
This is beneficial since $\theta$ has homogeneous boundary conditions: $\theta=0$ for $z\in\{0,H\}$. We may w.l.o.g.\ assume that
\begin{equation}\label{AT1.4}
\sup_x |T|\; =\; \sup_x |\theta|\;\le\;1.
\end{equation} 
In fact, by the maximum principle for the temperature, $T\in[0,1]$ (and thus $\theta \in [-1,1]$) is preserved during the evolution. Even if this condition is not satisfied initially, $T$ attains this temperature range exponentially fast. Since we are working with long time averages, finite time intervals in which \eqref{AT1.4} is violated will not affect our results.

\medskip

Notice that by the definition of $\theta$ and the Nusselt number representation \eqref{AN3}, we have
\begin{equation}\label{eq:7}
\int_0^{H}\la |\grad\theta|^2\ra\, dz
\;\sim\; \Nu.
\end{equation}
For higher order derivatives, it holds $\grad^2T=\grad^2\theta$, $\grad^3T=\grad^3\theta$ and so on.

\medskip

We observe that odd reflection of $\theta$ and $w$ and even reflection of $v$ leaves the equations \eqref{AT1.1} and \eqref{Aeq2} invariant. Regarding the boundary conditions of $\theta$, $w$, and $v$, we infer from \eqref{AT1.1} that $\del_z^2\theta=0$ for $z\in\{0,H\}$. Hence we may think from here on of $\theta$ as a (restriction of a) $2H$-periodic $C^3$ function in $z$, of $w$ as a (restriction of a) $2H$-periodic $C^1$ function in $z$, and of $v$ as a (restriction of a) $2H$-periodic $C^0$ vector field in $z$. In particular, $\theta$ is four times weakly differentiable, $w$ is twice weakly differentiable, and $v$ is one time weakly differentiable in $z$.

\medskip

The core of the proof of Theorem \ref{AT1} are the following maximal regularity-type estimates for convection-diffusion equations:
\begin{prop}\label{AP2}
Let $\zeta$, $u$, and $f$ be smooth periodic functions with vertical period $2H$ satisfying
\begin{eqnarray}
\del_t \zeta + u \cdot \grad \zeta - \laplace \zeta & =& f,\label{AP2.1}\\
\div u&=&0.\label{AP2.1bis}
\end{eqnarray}
Assume that $u=0$ for $z=0$. Assume further that
\begin{equation}\label{AP2.1cis}
\mbox{\rm supp}_z\zeta \subset [-\delta, \delta]
\end{equation}
for some $0<\delta\le H$. Let $1\le r< 2$. Then
\begin{eqnarray}
\lefteqn{\left(\int_{-H}^{H} \la |\grad^2 \zeta|^r\ra\, dz\right)^{1/r}}\nonumber\\
&\lesssim& M\left(\int_{-H}^H \la |f|^r\ra\, dz\right)^{1/r} \nonumber\\
&&\mbox{} + e^{-M} \left(\int_{-H}^H \la |\grad f|^r\ra\, dz\right)^{1/r} \nonumber\\
&&\mbox{}+ \delta^{(2-r)/2r}\left(\int_{-H}^H \la \zeta^2\ra\, dz\right)^{1/2} \nonumber\\
&&\mbox{} + N \left(\int_{-2\delta}^{2\delta} \la |\grad  u|^2\ra\, d z \right)^{1/2}
\left(\int_{-H}^{H} \la  |\zeta|^{2r/(2-r)}\ra\, d z\right)^{(2-r)/2r}\nonumber\\
&&\mbox{}+ e^{-N} \left(\int_{-H}^H \la|\grad u|^4\ra\, dz\right)^{1/4}\left(\int_{-H}^H \la |\grad \zeta|^{4r/(4-r)}\ra\, dz\right)^{(4-r)/4r}\label{AP2.0dis}
\end{eqnarray}
for any $ M,N\in\N_0$.
\end{prop}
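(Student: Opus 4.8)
\textbf{Proof plan for Proposition \ref{AP2}.}

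The plan is to treat \eqref{AP2.1} as a linear heat equation with inhomogeneous right-hand side, absorbing the transport term $u\cdot\grad\zeta$ into the source, and then to invoke a \emph{local} $L_r$ maximal regularity estimate for the heat operator $\del_t-\laplace$ on the slab $\R^{d-1}\times(-H,H)$. Since $\zeta$ has compact vertical support in $[-\delta,\delta]$ and $r<2$, one cannot simply apply global $L_r$ maximal regularity to $f-u\cdot\grad\zeta$: the transport term is only controlled in low-exponent norms weighted by the velocity. The key device, borrowed and refined from Otto \cite{Otto09} and Otto--Ramos \cite{OttoRamos}, is a Littlewood--Paley / dyadic decomposition in the time-space frequency scale, i.e.\ splitting the Duhamel representation $\zeta(t)=\int_{-\infty}^t e^{(t-s)\laplace}(f-u\cdot\grad\zeta)(s)\,ds$ into a ``high-frequency'' part (times $s$ close to $t$, handled by genuine maximal regularity and producing the factor $M$ or $N$ in front of the principal terms) and a ``low-frequency'' part (times $s$ far from $t$, where the heat semigroup is smoothing and one gains an exponentially small factor $e^{-M}$, resp.\ $e^{-N}$, at the price of one extra derivative on the data). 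This is exactly the structure visible on the right-hand side of \eqref{AP2.0dis}: each ``good'' term pairs a polynomial prefactor ($M$ or $N$) on a crude norm with an exponentially damped prefactor on a norm with one more derivative.

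The steps, in order, are as follows. First I would localize: multiply \eqref{AP2.1} by a vertical cutoff and record that, because $\spt_z\zeta\subset[-\delta,\delta]$, all spatial integrals of $\zeta$ and its derivatives reduce to that strip, and in particular the transport term involves only $u$ restricted to (a neighborhood of) $[-2\delta,2\delta]$ — this explains the appearance of $\int_{-2\delta}^{2\delta}\la|\grad u|^2\ra\,dz$. Using $\div u=0$ and $u=0$ at $z=0$, I would rewrite $u\cdot\grad\zeta=\div(u\zeta)$ and, where helpful, use the vanishing of $u$ at the plate together with a Hardy/Poincar\'e inequality in $z$ to trade a factor of $\grad u$ for $u/z$-type control on the thin strip — this is what lets the $\grad u$-norm sit at the $L^2_z$ scale rather than $L^\infty$. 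Second, I would set up the Duhamel formula and perform the dyadic-in-scale splitting at a threshold depending on $M$ (for the source terms $f$, $\grad f$) and separately at a threshold depending on $N$ (for the nonlinear terms). Third, on the high-frequency block I would apply the $L_r$ maximal regularity estimate for $\del_t-\laplace$ on the slab with homogeneous Dirichlet (or periodic-in-$y$, Neumann-type-in-$z$ after reflection) boundary conditions — the reflection making $\zeta$ genuinely $2H$-periodic as arranged in the excerpt — picking up the linear-in-$M$ (resp.\ linear-in-$N$) loss that is characteristic of summing $\sim M$ dyadic pieces. Fourth, on the low-frequency block I would use parabolic smoothing: $\|\grad^2 e^{\tau\laplace}g\|\lesssim \tau^{-1/2}\|\grad g\|$ together with the exponential decay coming from the positive spectral gap of $-\laplace$ on the bounded strip (eigenvalues bounded below by $\sim H^{-2}$, but more importantly the far-in-time contributions decay like $e^{-c\,2^{2j}}$ in the dyadic index $j\ge M$), yielding the $e^{-M}$ and $e^{-N}$ factors. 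Fifth, I would estimate the nonlinear source $\div(u\zeta)$ in $L_r$ by H\"older, splitting $\tfrac1r=\tfrac12+\tfrac{2-r}{2r}$ to pair $\|\grad u\|_{L^2}$ with $\|\zeta\|_{L^{2r/(2-r)}}$ (the $N$-term) and, for the piece carrying the extra derivative, $\tfrac1r=\tfrac14+\tfrac{4-r}{4r}$ to pair $\|\grad u\|_{L^4}$ with $\|\grad\zeta\|_{L^{4r/(4-r)}}$ (the $e^{-N}$-term); the third term on the right of \eqref{AP2.0dis}, with prefactor $\delta^{(2-r)/2r}$, comes from the initial/boundary contribution of $\zeta$ itself to the Duhamel integral, estimated in $L^2$ and converted to $L^r$ on the strip of width $\sim\delta$ by H\"older (this is where the power $(2-r)/2r$ of $\delta$ is produced). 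Finally I would collect all pieces and sum.

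The main obstacle I expect is the low-frequency/far-time estimate delivering the genuine exponential gain $e^{-M}$, $e^{-N}$ rather than merely a polynomial one: one must use the smoothing of the heat kernel carefully against the one derivative that has been spent on the data, and crucially exploit that on the \emph{bounded} strip (after reflection) the semigroup $e^{\tau\laplace}$ decays, so that the tail of the dyadic sum over scales $2^j\gtrsim 2^M$ is a convergent geometric-type series in $e^{-c2^{2j}}$ giving $e^{-cM}$ — and then one rescales $M\mapsto cM$, harmless since the prefactors are only asked to be $\lesssim$. A secondary technical point is handling the boundary: the odd reflection of $\zeta$ is only $C^3$ (as noted in the excerpt, $\del_z^2\zeta=0$ at the plates), so the maximal regularity estimate for $\grad^2\zeta$ in $L_r$, $r<2$, must be the one valid for this limited regularity; since $r<2$ this is below the Calder\'on--Zygmund borderline and is exactly the regime covered by the refined estimates of \cite{Otto09, OttoRamos}, so I would cite those. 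The bookkeeping of which velocity norm (over $[-2\delta,2\delta]$ versus over $[-H,H]$) attaches to which term is delicate but routine once the splitting is fixed.
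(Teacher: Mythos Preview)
Your plan is a genuinely different route from the paper's, and it has a gap precisely at the point you yourself flag as the main obstacle: the exponential gain $e^{-M}$, $e^{-N}$.

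The paper does \emph{not} use Duhamel or the heat semigroup at all. Its decomposition is a purely \emph{spatial} Littlewood--Paley decomposition $\zeta=\zeta_0^{<}+\sum_{\ell\ge0}\zeta_\ell$, with $\zeta_\ell=\phi_\ell*\zeta$ supported in the frequency annulus $|q|\sim e^{\ell}$. The heart of the argument is an energy estimate (Lemma~\ref{AL7}) for \emph{narrow-banded} solutions: if $\F\zeta$ is supported near $|q|=|q_0|$, then testing \eqref{AP2.1} against $(\mathrm{sign}\,\zeta)|\zeta|^{r-1}$ and using that $-\laplace\zeta\approx|q_0|^2\zeta$ (up to an $O(\sigma)$ error) yields directly $\int\la|\zeta|^r\ra\lesssim\int\la|f|^r\ra$ without any Calder\'on--Zygmund or semigroup machinery. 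After rescaling to put the $\ell$-th block at unit frequency and filtering the equation, the transport term becomes a commutator $[u\cdot,\phi_\ell*]\grad\zeta$, estimated by Lemma~\ref{AL8}; the first moment $\int|\phi_\ell||x|\,dx\sim e^{-\ell}$ is what produces the geometric factor. Summing $0\le\ell<N$ gives the prefactor $N$; summing $\ell\ge N$ gives $\sum_{\ell\ge N}e^{-\ell}\sim e^{-N}$. The $M$/$e^{-M}$ split on $f$ is the same mechanism applied to $\sum_\ell\|f_\ell\|_r$. The low-frequency piece $\zeta_0^{<}$ is handled by Plancherel in $L^2$ and H\"older on the thin support, producing $\delta^{(2-r)/2r}$. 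The localization of $u$ to $[-2\delta,2\delta]$ comes from replacing $u$ by $\eta u$ (legitimate since $\spt_z\zeta\subset[-\delta,\delta]$) and Poincar\'e using $u|_{z=0}=0$.

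Your semigroup route would need the exponential gain to come from heat-kernel decay, but on the $2H$-periodic slab the spectral gap of $-\laplace$ is $\sim H^{-2}$, so $e^{\tau\laplace}$ decays like $e^{-cH^{-2}\tau}$ and any constant you extract this way depends badly on $H$. Your alternative, ``far-in-time contributions decay like $e^{-c2^{2j}}$ in the dyadic index $j\ge M$'', conflates spatial-frequency localization with time decay: on a block at spatial frequency $2^j$, the heat factor is $e^{-\tau 2^{2j}}$, and integrating in $\tau$ yields $2^{-2j}$, a polynomial rather than exponential gain in $j$. Without an $H$-independent mechanism for the exponential factors, the estimate you assemble will not be uniform in $H$. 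The paper sidesteps this entirely because its exponential comes from the \emph{first moment of the convolution kernel} in the commutator bound, a purely spatial and scale-invariant fact.
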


In the case $r=2$ we can do better:

\begin{prop}\label{AP3}
Let $\zeta$, $u$, and $f$ be smooth periodic functions with vertical period $2H$ satisfying
\begin{eqnarray}
\del_t \zeta + u \cdot \grad \zeta - \laplace \zeta & =& f,\label{AP3.1}\\
\div u&=&0.\label{AP3.1bis}
\end{eqnarray}
Assume that
\begin{equation}\label{AP3.1cis}
\mbox{\rm supp}_z\zeta \subset [-\delta, \delta]
\end{equation}
for some $0<\delta\le H$. Then
\begin{equation}\label{AP3.2}
\int_{-H}^H \la |\grad^2 \zeta|^2\ra\, dz\;\lesssim\; \int_{-H}^H \la f^2\ra\, dz + \sup_x |\zeta| \int_{-\delta}^{\delta} \la |\grad u|^2\ra\, dz.
\end{equation}
\end{prop}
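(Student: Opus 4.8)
The plan is to derive an $L^2$ energy estimate directly from the PDE \eqref{AP3.1}, differentiated once in space, and to handle the transport term by exploiting the support condition \eqref{AP3.1cis} together with the $\sup$-bound on $\zeta$ and the incompressibility \eqref{AP3.1bis}. First I would test the equation for $\partial_j\zeta$ (summed over $j$) against $-\laplace\zeta$; by the periodicity in $z$ (period $2H$) and the horizontal periodicity, all boundary terms in the integrations by parts vanish, and the time-derivative term produces a total time derivative $\frac{d}{dt}\int\la|\grad\zeta|^2\ra$, which drops out after taking the long-time average $\la\cdot\ra$. The diffusion term yields the good quantity $\int_{-H}^H\la|\grad^2\zeta|^2\ra\,dz$, and the forcing term is controlled by Cauchy--Schwarz and Young as $\lesssim\int_{-H}^H\la f^2\ra\,dz$ plus a small multiple of the good term.

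The crux is the convection term, which after testing becomes $\int_{-H}^H\la (u\cdot\grad\grad\zeta)\cdot\laplace\zeta\ra\,dz$ plus a commutator term $\int_{-H}^H\la(\grad u\,\grad\zeta)\cdot\laplace\zeta\ra\,dz$ coming from differentiating $u\cdot\grad\zeta$. For the first of these, I would use $\div u=0$ to integrate by parts and move one derivative off, reducing it (after symmetrization) to something of the form $\int\la \grad u\,\grad\grad\zeta\,\grad\zeta\ra$; in any case both surviving terms are bilinear in second-and-lower derivatives of $\zeta$ against $\grad u$. The key trick is to write such a term by integrating by parts once more so that no more than one derivative lands on $\zeta$ beyond a factor that can be absorbed, and to estimate $|\grad u\,\grad^2\zeta\,\grad\zeta|\lesssim |\grad u|\,|\grad^2\zeta|\,|\grad\zeta|$; then one uses the interpolation inequality \eqref{L10.2} from Lemma \ref{L10} (with the role of $\zeta$ there played by our $\zeta$, which has compact $z$-support hence is legitimately $2H$-periodic) to trade $\int\la|\grad\zeta|^4\ra$ for $\sup_x|\zeta|^2\int\la|\grad^2\zeta|^2\ra$. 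Crucially, the spatial integrals of the velocity factor $\int_{-H}^H\la|\grad u|^2\ra\,dz$ may be restricted to $z\in[-\delta,\delta]$ because the integrand is multiplied by powers of $\grad\zeta$ or $\grad^2\zeta$, which vanish outside $[-\delta,\delta]$ by \eqref{AP3.1cis} (note $\del_z^k\zeta$ still has $z$-support in $[-\delta,\delta]$). After a Cauchy--Schwarz in $z$ and a Young inequality, the convection contribution is bounded by $\sup_x|\zeta|\,\big(\int_{-\delta}^{\delta}\la|\grad u|^2\ra\,dz\big)^{1/2}\big(\int_{-H}^H\la|\grad^2\zeta|^2\ra\,dz\big)^{1/2}$ up to constants; applying Young once more splits off a small multiple of $\int\la|\grad^2\zeta|^2\ra$, absorbable into the left side, at the cost of the term $\sup_x|\zeta|\int_{-\delta}^{\delta}\la|\grad u|^2\ra\,dz$ appearing on the right --- exactly the second term in \eqref{AP3.2}. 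One should double-check that the bookkeeping produces $\sup_x|\zeta|$ to the first power and not a square; this is arranged by pairing one factor of $\sup_x|\zeta|^{1/2}$ from \eqref{L10.2}'s $\sup_x|\zeta|^2$ into each of the two Young-split pieces.

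I would organize the write-up as: (i) the energy identity after testing with $-\laplace\zeta$ and averaging; (ii) the straightforward bound on the forcing term; (iii) the decomposition of the nonlinear term into the genuinely-transport piece and the commutator, handling of both by integration by parts using \eqref{AP3.1bis}, and the key estimate via Hölder plus \eqref{L10.2}, exploiting \eqref{AP3.1cis} to localize the velocity integral; (iv) collecting terms and absorbing. A minor point to be careful about: one must justify that $\zeta$, being smooth and $2H$-periodic with $z$-support in $[-\delta,\delta]$, satisfies the hypotheses of Lemma \ref{L10} and that the various integrations by parts in $z$ over $[-H,H]$ generate no boundary contributions --- this follows from $2H$-periodicity (or equivalently, from the vanishing of $\zeta$ and its derivatives near $z=\pm H$ when $\delta<H$).

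The main obstacle I anticipate is the correct treatment of the convection term: naively $\int\la u\cdot\grad\grad\zeta\cdot\laplace\zeta\ra$ has three derivatives on $\zeta$ and would need $\grad^3\zeta$ control, which is not available. The resolution --- integrating by parts in the transport direction using $\div u=0$ to convert it into a term with only $\grad^2\zeta$, $\grad\zeta$, and $\grad u$, and then invoking the $\sup$-bound on $\zeta$ through the multiplicative interpolation inequality \eqref{L10.2} rather than a plain Sobolev embedding --- is what makes the estimate close, and getting the power of $\sup_x|\zeta|$ and the localization of the velocity norm both exactly right is the delicate part.
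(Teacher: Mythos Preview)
Your proposal is correct and follows essentially the same approach as the paper. The paper differentiates \eqref{AP3.1} in $x_i$, tests with $\partial_i\zeta$, uses $\div u=0$ to kill the pure transport piece, and then handles the remaining term $-\int\la\partial_i\zeta\,\partial_i u\cdot\grad\zeta\ra$ by one further integration by parts (again using $\div u=0$) to obtain $\int\la\grad\partial_i\zeta\cdot\partial_i u\,\zeta\ra$, from which $\sup_x|\zeta|$ is pulled out directly and Cauchy--Schwarz/Young finish. Your route is the same up to the point where you reach a term of type $\int\la\grad u\,\grad\zeta\,\grad\zeta\ra$; you then bound $\int\la|\grad\zeta|^4\ra$ via Lemma~\ref{L10}, inequality \eqref{L10.2}. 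Since the proof of \eqref{L10.2} is exactly the integration by parts the paper performs inline, the two arguments are the same in substance---yours just packages that step as a lemma citation.

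One small point: do not strain to obtain $\sup_x|\zeta|$ to the \emph{first} power. Both the paper's direct argument and your route via \eqref{L10.2} naturally yield, after Young's inequality, a term $\sup_x|\zeta|^2\int_{-\delta}^{\delta}\la|\grad u|^2\ra\,dz$. The exponent $1$ in the statement \eqref{AP3.2} appears to be a harmless slip (in every application $\sup_x|\zeta|\le 1$ via \eqref{AT1.4}, so the power is immaterial); your attempted ``pairing of $\sup_x|\zeta|^{1/2}$ into each Young piece'' does not actually produce the first power and is unnecessary. Also, your opening sentence about testing ``the equation for $\partial_j\zeta$ against $-\laplace\zeta$'' is garbled---what you mean (and what you then carry out) is either testing \eqref{AP3.1} with $-\laplace\zeta$, or equivalently testing the $\partial_j$-differentiated equation with $\partial_j\zeta$ and summing; clean up that wording in your write-up.
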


In the proof of Theorem \ref{AT1}, we will apply Propositions \ref{AP2} and \ref{AP3} to the localized temperature field and its derivatives, i.e., $\zeta=\eta\theta,\, \partial_i(\eta\theta),\, \partial_{ij}(\eta\theta)$, where $\eta=\eta(z)$ denotes a cut-off function. Before, we like to comment on \eqref{AP2.0dis} and \eqref{AP3.2}. The derivation of a maximal regularity estimate for convection-diffusion equations in $L_2$ is a straight-forward computation, cf.\ proof of Proposition \ref{AP3}. Its $L_r$ analogue, however, is challenging. The difficulty in deriving scale-invariant $L_r$ estimates stems from the fact that, for our purpose, the convection term $u\cdot\grad \zeta$ can not be treated as a error term: In fact, local (and thus, up to logarithms uniform) bounds on the velocity field are known only in $L_2$, see the previous section. Hence ordinary maximal regularity theory in Sobolev spaces is not suitable for our purposes. On the level of Besov norms, however, the transport-nonlinearity can be controlled in an elegant way. At first glance, we expect an estimate of the type
\begin{eqnarray}
\lefteqn{\left(\int_{-H}^H \la |\grad^2\zeta|^{r}\ra\, dz\right)^{1/r}}\nonumber\\
&\lesssim&
\left(\int_{-H}^H \la |f|^{r}\ra\, dz\right)^{1/r}\nonumber\\
&&\mbox{}+ \left(\int_{-H}^H \la |\grad u|^{2}\ra\, dz\right)^{1/2}\left(\int_{-H}^H \la |\zeta|^{2r/(2-r)}\ra\, dz\right)^{(2-r)/2r}.\nonumber
\end{eqnarray}
Compared to this estimate, the additional error terms that occur in \eqref{AP2.0dis} are caused the fact that Sobolev and Besov norms are in general not equivalent. These error terms produce an additional logarithmic prefactor in Theorem \ref{AT1}.

\medskip

We postpone the proof of Propositions \ref{AP2} and \ref{AP3} to the end of this paper and start with a series of global bounds on the temperature field, all based on Proposition \ref{AP1}:

\begin{lemma}\label{AL9}
There exists a $\gamma>0$ such that
\begin{eqnarray}
\int_0^H\la|\grad T|^4\ra\, dz
&\lesssim&
\int_0^H \la|\grad^2 T|^2\ra\, dz\nonumber\\
&\lesssim&\int_0^{H} \la |\grad^3 T|^{4/3}\ra\, dz\nonumber\\
&\lesssim&\int_0^{H} \la |\grad^4 T|\ra\, dz\nonumber\\
&\lesssim&
(\ln^{\gamma} H)H.\label{AL9.2}
\end{eqnarray}
\end{lemma}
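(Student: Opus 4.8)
The plan is to prove the chain of inequalities in \eqref{AL9.2} from the bottom up, exploiting that the displayed inequalities in Lemmas \ref{L9} and \ref{L10} combine to give a self-improving interpolation scheme once one has an $L_2$ bound on $\grad T$ and the maximum principle $\sup_x|T|\le 1$. Since $\grad^2 T=\grad^2\theta$ etc.\ and since $\theta$ extends to a $2H$-periodic function in $z$ (with $\theta,\grad^2\theta$ vanishing at the plates), I would work with the $2H$-periodic extension and apply Lemmas \ref{L9} and \ref{L10} with $\zeta=\theta$ (equivalently, with $\grad\theta$, $\grad^2\theta$ as inputs where needed), halving all integrals at the end by the reflection symmetry.

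First I would establish the three left-hand inequalities. The first one, $\int_0^H\la|\grad T|^4\ra\lesssim\int_0^H\la|\grad^2T|^2\ra$, is precisely \eqref{L10.2} applied to $\zeta=\theta$ together with $\sup_x|\theta|\le 1$ from \eqref{AT1.4}. The third one, $\int_0^H\la|\grad^3T|^{4/3}\ra\lesssim\int_0^H\la|\grad^4T|\ra$, follows from Lemma \ref{L9} applied to $\zeta=\grad^2\theta$ (which is $2H$-periodic since $\partial_z^2\theta$ vanishes at the plates so $\grad^2\theta$ reflects consistently), giving $\bigl(\int\la|\grad^3\theta|^{4/3}\ra\bigr)^{3/4}\lesssim\bigl(\int\la|\grad^2\theta|^2\ra\bigr)^{1/4}\bigl(\int\la|\grad^4\theta|\ra\bigr)^{1/2}$, and then using the already-proven bound $\int\la|\grad^2\theta|^2\ra\lesssim\int\la|\grad^3\theta|^{4/3}\ra$ — wait, more carefully: I would instead use \eqref{L10.1} with $\zeta=\theta$ to get $\int\la|\grad^2\theta|^2\ra\lesssim\int\la|\grad^3\theta|^{4/3}\ra$ (second inequality), and for the third inequality use Lemma \ref{L9} with $\zeta=\grad^2\theta$ combined with Young's inequality to absorb the $\bigl(\int\la|\grad^2\theta|^2\ra\bigr)^{1/4}$ factor. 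Concretely: $\int\la|\grad^3\theta|^{4/3}\ra\lesssim\bigl(\int\la|\grad^2\theta|^2\ra\bigr)^{1/3}\bigl(\int\la|\grad^4\theta|\ra\bigr)^{2/3}$, and then interpolating $\int\la|\grad^2\theta|^2\ra\lesssim\bigl(\int\la|\grad^3\theta|^{4/3}\ra\bigr)^{?}\cdots$ — the clean route is to chain the Lemma \ref{L10} inequalities and a Gagliardo–Nirenberg step so that each higher norm dominates the lower one, then use Young at the end.

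The substantive step is the final inequality $\int_0^H\la|\grad^4T|\ra\,dz\lesssim(\ln^\gamma H)H$. For this I would apply the local maximal-regularity estimate, Proposition \ref{AP2} or \ref{AP3}, to $\zeta=\partial_{ij}(\eta\theta)$ (or rather to the second spatial derivatives of $\theta$) on the full slab $\delta\sim H$, i.e.\ without genuine localization, which turns $\grad^4$ control into control of the right-hand side: $\int\la|\grad^2(\partial_{ij}\theta)|^r\ra$ is bounded by the source term $f$ obtained after differentiating \eqref{AT1.1} twice in $y$ (producing $\frac1H\grad^2 w$, commutator terms $\grad^2 u\cdot\grad\theta$, $\grad u\cdot\grad^2\theta$, and lower-order pieces), plus the convective error terms involving $\int_0^H\la|\grad u|^2\ra$ and $\int_0^H\la|\grad u|^4\ra$. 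The velocity terms are controlled by \eqref{AN4} (hence by $H\cdot\Nu$ after multiplying by $H$, since $\int_0^H\la|\grad u|^2\ra\le H\Nu$) and by Lemma \ref{AL3} which gives $\int_0^H\la|\grad u|^q\ra\lesssim H^{q+1}$ — combined with \eqref{110} these are all $\lesssim(\ln^\gamma H)H$ after the appropriate power counting; the source term $\frac1{H}\grad^2 w$ contributes $\frac1{H^2}\int_0^H\la|\grad^2 w|^2\ra\lesssim\frac1{H^2}\cdot H\Nu$ via \eqref{AN4}/\eqref{5}, again within budget.

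The main obstacle I expect is the bookkeeping in the last step: one must differentiate the convection–diffusion equation \eqref{AT1.1} twice, identify every commutator term $[\grad^2,u\cdot\grad]\theta$, choose the exponent $r<2$ in Proposition \ref{AP2} so that the interpolation exponents on $\zeta=\grad^2\theta$ match the globally available norms (this is where the earlier inequalities in \eqref{AL9.2} and the maximum principle $\sup|\theta|\le1$ get reused to convert the $L^{2r/(2-r)}$ and $L^{4r/(4-r)}$ norms of $\grad\theta,\grad^2\theta$ back into the $L^4,L^2,L^{4/3}$ norms already appearing in the chain), and then optimize the free integers $M,N\in\N_0$ to absorb the $e^{-M},e^{-N}$ error terms at the cost of logarithmic prefactors $M,N\sim\ln H$. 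None of the individual estimates is hard, but assembling them so that every term lands at $\lesssim(\ln^\gamma H)H$ with a uniform $\gamma$ requires care; this is presumably why the author defers the full argument and only records the statement here, and why the exponent $\gamma$ is left unspecified.
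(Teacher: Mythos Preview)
Your proposal is correct and follows essentially the same route as the paper: the three upper inequalities come exactly from Lemmas \ref{L10} and \ref{L9} applied to $\zeta=\theta$ and $\zeta=\grad^2\theta$ (with \eqref{L10.1} giving the second inequality, and \eqref{L9.1} plus \eqref{L10.1} and Young giving the third), and the final bound comes from differentiating \eqref{AT1.1} twice, applying Proposition \ref{AP2} with $\delta=H$ and no cutoff, and choosing $M,N\sim\ln H$. Two small sharpenings: the right exponent is $r=1$ (not merely some $r<2$), since then $2r/(2-r)=2$ and $4r/(4-r)=4/3$ land exactly on the $L^2$ norm of $\grad^2\theta$ and the $L^{4/3}$ norm of $\grad^3\theta$ already controlled by the chain; and Proposition \ref{AP3} is not an option here, only \ref{AP2}.
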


\begin{proof}[Proof of Lemma \ref{AL9}.] We first remark that it is enough to prove the statement with $T$ replaced by $\theta$. Observe that in view of the Lemmas \ref{L9} and \ref{L10} above, which we apply to $\zeta=\grad^2 \theta$ and $\zeta=\theta$, respectively, and the maximum principle for the temperature \eqref{AT1.4}, it is
\begin{eqnarray}
\int_{-H}^{H} \la |\grad \theta|^4\ra\, dz&\lesssim& \int_{-H}^{H} \la |\grad^2 \theta|^2\ra\, dz, \label{108}\\
\int_{-H}^{H} \la |\grad^2 \theta|^2\ra\, dz &\lesssim& \int_{-H}^{H} \la |\grad^3 \theta|^{4/3}\ra\, dz,\label{107}\\
\int_{-H}^{H} \la |\grad^3 \theta|^{4/3}\ra\, dz &\lesssim& \int_{-H}^{H} \la |\grad^4 \theta|\ra\, dz.\label{109}
\end{eqnarray}
Indeed, \eqref{108} and \eqref{107} follow immediately from \eqref{L10.2} and \eqref{L10.1}, respectively, with $\zeta=\theta$ via \eqref{AT1.4}. For \eqref{109}, we apply \eqref{L9.1} with $\zeta=\grad^2 \theta$ and estimate:
\begin{eqnarray*}
\int_{-H}^{H} \la |\grad^3 \theta|^{4/3}\ra\, dz &\stackrel{\eqref{L9.1}}{\lesssim}& 
\left(\int_{-H}^{H} \la |\grad^2 \theta|^{2}\ra\, dz\right)^{1/3}\left(\int_{-H}^{H} \la |\grad^4 \theta|\ra\, dz\right)^{2/3}\\
&\stackrel{\eqref{107}}{\lesssim}&\left(\int_{-H}^{H} \la |\grad^3 \theta|^{4/3}\ra\, dz\right)^{1/3}\left(\int_{-H}^{H} \la |\grad^4 \theta|\ra\, dz\right)^{2/3}.
\end{eqnarray*}
It remains to invoke Young's inequality to obtain \eqref{109}.

\medskip

As a consequence of \eqref{108}--\eqref{109}, for \eqref{AL9.2} it is enough to prove
\begin{equation}\label{114}
\int_0^H \la |\grad^4 \theta|\ra\, dz\; \lesssim\; (\ln^{\gamma} H)H.
\end{equation}
We differentiate \eqref{AT1.1} w.r.t.\ $x_i$ and $x_j$ for $i,j\in\{1,\dots, d\}$, that is
\[
\partial_t \del_{ij}\theta + u\cdot \grad\del_{ij} \theta - \laplace \del_{ij}\theta\;=\; \frac1H\del_{ij} w - \partial_i u\cdot\grad\partial_j\theta - \partial_j u\cdot\grad\partial_i\theta - \del_{ij} u\cdot \grad\theta.
\]
Applying Proposition \ref{AP2} with $r=1$, and $\delta=H$ to $\zeta=\del_{ij}\theta$ and $f= \frac1H\del_{ij} w - \partial_i u\cdot\grad\partial_j\theta - \partial_j u\cdot\grad\partial_i\theta - \del_{ij} u\cdot \grad\theta$ and summing over all $1\le i,j\le d $ yields
\begin{eqnarray}
\lefteqn{\int_{-H}^{H} \la |\grad^4 \theta|\ra\, dz}\nonumber\\
&\lesssim& M\int_{-H}^H \la |f|\ra\, dz \nonumber\\
&&\mbox{} + e^{-M} \int_{-H}^H \la |\grad f|\ra\, dz \nonumber\\
&&\mbox{}+ H^{1/2}\left(\int_{-H}^H \la |\grad^2\theta|^2\ra\, dz\right)^{1/2} \nonumber\\
&&\mbox{} + N \left(\int_{-H}^{H} \la |\grad  u|^2\ra\, d z \right)^{1/2}
\left(\int_{-H}^{H} \la  |\grad^2\theta|^2\ra\, d z\right)^{1/2}\nonumber\\
&&\mbox{}+ e^{-N} \left(\int_{-H}^H \la|\grad u|^4\ra\, dz\right)^{1/4}\left(\int_{-H}^H \la |\grad^3\theta|^{4/3}\ra\, dz\right)^{3/4}\nonumber.
\end{eqnarray}
We invoke \eqref{107} and \eqref{109} successively and estimate with the help of Young's inequality
\begin{eqnarray}
\int_{-H}^{H} \la |\grad^4 \theta|\ra\, dz
&\lesssim& M\int_{-H}^H \la |f|\ra\, dz  + e^{-M} \int_{-H}^H \la |\grad f|\ra\, dz \nonumber\\
&&\mbox{}+ H  + N^2 \int_{-H}^{H} \la |\grad  u|^2\ra\, d z + e^{-4N}  \int_{-H}^H\la|\grad u|^4\ra\, dz\label{111}.
\end{eqnarray}
Recalling the symmetry properties of the involved quantities at $z=0$, we see that we can replace the domain of vertical integration in the above estimate by $[0,H]$. Observe that the (weak) vertical derivative of $f$ at $z\in\{0,H\}$ might not be defined in the case where $\partial_{ij}=\partial_z^2$. In this case, we may interpret $\int \la |\partial_z f|\ra\, dz$ in the sense of a BV norm and use the symmetry property to estimate
\[
\int_{-H}^H \la |\del_z f|\ra\, dz \;\le\; 2\la |f_{|z=0}|\ra  + 2\la | f_{|z=H}|\ra+ 2\int_{0}^H \la |\del_z f|\ra\, dz\;\lesssim\; \int_{0}^H \la |\del_z f|\ra\, dz,
\]
where the last inequality follows from a standard Poincar\'e/trace estimate, using the fact that $\partial_z^{-1}f=0$ for $z\in\{0,H\}$.

\medskip

It remains to estimate the terms on the r.h.s.\ of \eqref{111}. We start with the first integral. By the definition of $f$ and using H\"older's inequality, we have
\begin{eqnarray*}
\lefteqn{\int_0^H \la |f|\ra\, dz}\\
&\lesssim& \frac1H\int_0^H \la|\grad^2 w|\ra\, dz + \left(\int_0^H \la|\grad u|^2\ra\, dz\right)^{1/2}\left( \int_0^H \la|\grad^2\theta|^2\ra\, dz\right)^{1/2}\\
&&\mbox{} + \left(\int_0^H \la|\grad^2 u|^{4/3}\ra\, dz\right)^{3/4}\left( \int_0^H \la|\grad\theta|^4\ra\, dz\right)^{1/4}.
\end{eqnarray*}
Via Young's and Jensen's inequalities and \eqref{108}--\eqref{109}, we further estimate
\begin{eqnarray}
\int_0^H \la |f|\ra\, dz&\lesssim& \left(\frac1H\int_0^H \la|\grad^2 w|^2\ra\, dz\right)^{1/2}\nonumber\\
&&\mbox{} +\int_0^H \la|\grad u|^2\ra\, dz +\int_0^H \la|\grad^2 u|^{4/3}\ra\, dz   + (\star)\nonumber\\
&\stackrel{\eqref{AL3.1}\&\eqref{AN4}}{\lesssim}& 1 +  H\Nu +H + (\star)\nonumber\\
&\stackrel{\eqref{110}\&H\gg1}{\lesssim}& (\ln^{\gamma} H)H + (\star)\label{112},
\end{eqnarray}
for some $\gamma>0$. Here and in the following, we denote by $(\star)$ terms that can be absorbed in the l.h.s.\ of \eqref{111}.

\medskip

For the $\grad f$-term, we first compute
\[
|\grad f|\;\lesssim\; \frac1H|\grad^3w| + |\grad u||\grad^3\theta| + |\grad^2 u||\grad^2\theta| + |\grad^3 u||\grad\theta|.
\]
We estimate using H\"older's inequality
\begin{eqnarray*}
\lefteqn{\int_0^H \la |\grad f|\ra\, dz}\\
&\lesssim& \frac1H\int_0^H \la|\grad^3 w|\ra\, dz + \left(\int_0^H \la|\grad u|^4\ra\, dz\right)^{1/4}\left( \int_0^H \la|\grad^3\theta|^{4/3}\ra\, dz\right)^{3/4}\\
&&\mbox{} + \left(\int_0^H \la|\grad^2 u|^{2}\ra\, dz\right)^{1/2}\left( \int_0^H \la|\grad^2\theta|^2\ra\, dz\right)^{1/2}\\
&&\mbox{} + \left(\int_0^H \la|\grad^3 u|^{2}\ra\, dz\right)^{1/2}\left( \int_0^H \la|\grad\theta|^2\ra\, dz\right)^{1/2}.
\end{eqnarray*}
We apply Young's and Jensen's inequalities as above and estimate with the help of \eqref{107},\eqref{109}\&\eqref{eq:7}
\begin{eqnarray*}
\int_0^H \la |\grad f|\ra\, dz
&\lesssim& \left(\frac1H\int_0^H \la|\grad^3 w|^2\ra\, dz\right)^{1/2} + \int_0^H \la|\grad u|^4\ra\, dz\\
&&\mbox{} + \int_0^H \la|\grad^2 u|^{2}\ra\, dz + \left(\int_0^H \la|\grad^3 u|^{2}\ra\, dz\right)^{1/2}\Nu^{1/2}+ (\star).
\end{eqnarray*}
To control the terms on the right, we finally invoke \eqref{AL6.1} and \eqref{AL3.1}:
\begin{eqnarray}
\int_0^H \la |\grad f|\ra\, dz
&\lesssim& \frac1{H^{1/2}}\Nu^{1/2} + H^5 + H + \Nu+ (\star)\nonumber\\
&\stackrel{\eqref{110}\&H\gg1}{\lesssim}& H^5+ (\star).\label{113}
\end{eqnarray}

\medskip

Inserting \eqref{112} and \eqref{113} into \eqref{111} and applying \eqref{AL3.1} and \eqref{AN4} to the remaining terms, we have
\begin{eqnarray*}
\int_0^H \la|\grad^4\theta|\ra\, dz &\lesssim& M (\ln^{\gamma}H)H + e^{-M} H^5 + H + N^2H\Nu + e^{-4N}H^5.
\end{eqnarray*}
Choosing $M= 4\ln H$ and $ N=\ln H$, and using \eqref{110} and $H\gg1$ again leads to the desired estimate \eqref{114}.

\end{proof}

We now turn to the

\begin{proof}[Proof of Theorem \ref{AT1}] {\em Step 1. Localization of the heat equation.} 
Let $\eta=\eta(z)$ denote a $2H$-periodic smooth function with $\eta(z)=\eta(-z)$ that has the cut-off properties $\eta=1$ for $z\in [-1,1]$, $\eta=0$ for $z\not\in[-2, 2]$, and $\sup \left(|\eta|+ |\eta'| + |\eta''| + |\eta'''|\right)\lesssim1$, where $\eta'=\frac{d}{dz}\eta$ etc. Multiplying \eqref{AT1.1} by $\eta$ yields
\begin{equation}\label{T2.1}
\partial_t(\eta\theta) + u\cdot\grad(\eta\theta) - \laplace (\eta\theta)\;=\; \frac1H\eta w + \eta' w\theta - 2\eta'\partial_z \theta- \eta''\theta.
\end{equation}

\medskip

{\em Step 2. Bound on the second-order derivatives.}
The bound on the second order derivatives relies on Proposition \ref{AP3}. We define $\zeta=\eta\theta$ and $f=\frac1H\eta w + \eta' w\theta - 2\eta'\partial_z\theta - \eta''\theta$ and choose $\delta=2$. Eventually after mollifying $\zeta$ and $f$ and recalling the symmetry of the functions involved, \eqref{AP3.2} yields
\begin{equation}\label{T2.2}
\int_0^H \la|\grad^2 (\eta\theta)|^2\ra\, dz\;\lesssim\; \int_0^H \la f^2\ra\, dz + \sup_x|\eta\theta|\int_0^2 \la|\grad u|^2\ra\, dz.
\end{equation}
Also notice that the interpolation inequality \eqref{L10.2} together with \eqref{AT1.4} yields the lower bound
\begin{equation}\label{T2.4}
\int_0^H \la |\grad(\eta\theta)|^{4}\ra\, dz\;\lesssim\; \int_0^H \la|\grad^2 (\eta\theta)|^2\ra\, dz.
\end{equation}
To bound the r.h.s.\ of \eqref{T2.2}, we invoke the definition of $\eta$ and compute
\begin{eqnarray*}
f^2&\lesssim& \frac1{H^2}w^2 + w^2\theta^2 + (\partial_z\theta)^2 + \theta^2\\
&\stackrel{\eqref{AT1.4}\& H\gg1}{\lesssim}& w^2 + (\partial_z\theta)^2 + 1,
\end{eqnarray*}
and the r.h.s.\ is in vertical direction supported in $[0,2]$. Hence, using \eqref{AT1.4} and the properties of $\eta$ again, \eqref{T2.2} becomes
\[
\int_0^1 \la |\grad^2\theta|^2\ra\, dz\;\lesssim\; \int_0^2 \la w^2\ra\, dz + \int_0^2 \la(\partial_z\theta)^2\ra\, dz + \int_0^2\la|\grad u|^2\ra\, dz + 1,
\]
and thus, by \eqref{Aeq:3}, \eqref{eq:7} and \eqref{110}, we obtain
\begin{equation}\label{T2.3}
\int_0^1 \la|\grad^2\theta|^2\ra\,dz\;\lesssim\; \ln^{\gamma} H,
\end{equation}
where $\gamma>0$ is an arbitrary constant dependent only on the space dimension, and whose value may change from line to line in the remainder of the proof. In view of \eqref{T2.4}, the same estimate yields
\begin{equation}\label{T2.5}
\int_0^1 \la|\grad\theta|^4\ra\,dz\;\lesssim\; \ln^{\gamma} H.
\end{equation}
For the following steps, it is crucial to remark that the estimates in \eqref{T2.3} and \eqref{T2.5} hold in {\em any} strip $[0,\Lambda)^{d-1}\times(0,h)$ for $h\sim 1$.

\medskip

{\em Step 3. Bound on the third-order derivatives.}
The bound on the third-order derivatives relies on Proposition \ref{AP2} with $r=4/3$. Starting point is \eqref{T2.1}. Differentiating this localized equation w.r.t.\ $x_i$ for some $i\in\{1,\dots, d\}$ yields
\begin{eqnarray}
\lefteqn{\partial_t \partial_i (\eta\theta) + u\cdot\grad \partial_i(\eta\theta) - \laplace\partial_i (\eta\theta)}\nonumber\\
 &=& \frac1H \partial_i \eta w + \frac1H \eta \partial_i w  + \partial_i\eta' w\theta  + \eta' w \partial_i\theta\nonumber\\
&&\mbox{} -2\partial_i\eta' \partial_z \theta - 2\eta'\partial_i\partial_z\theta - \partial_i \eta''\theta - \eta''\partial_i\theta  - \eta \partial_i u\cdot\grad \theta  . \label{T2.11}
\end{eqnarray}
With $\zeta=\partial_i(\eta\theta)$ and $f$ defined by the r.h.s.\ of this equation, and $\delta=2$, we may apply Proposition \ref{AP2}. Eventually using a mollification argument and using the symmetry of $\zeta$, $u$, and $f$, \eqref{AP2.0dis} yields
\begin{eqnarray*}
\lefteqn{\left(\int_0^H \la|\grad^2\partial_i (\eta\theta)|^{4/3}\ra\, dz\right)^{3/4}}\\
&\lesssim& M\left(\int_0^H \la|f|^{4/3}\ra\, dz\right)^{3/4}\\
&&\mbox{} + e^{-M}\left(\int_0^H \la|\grad f|^{4/3}\ra\, dz\right)^{3/4}\\
&&\mbox{} + \left(\int_0^H \la(\partial_i(\eta\theta))^2\ra\, dz\right)^{1/2}\\
&&\mbox{} + N\left(\int_0^4 \la|\grad u|^{2}\ra\, dz\right)^{1/2}\left(\int_0^H \la(\partial_i(\eta\theta))^4\ra\, dz\right)^{1/4}\\
&&\mbox{} + e^{-N}\left(\int_0^H \la|\grad u|^{4}\ra\, dz\right)^{1/4}\left(\int_0^H \la|\grad\partial_i(\eta\theta)|^2\ra\, dz\right)^{1/2}.
\end{eqnarray*}
Summing over all $i$, invoking the interpolation inequalities \eqref{L10.2}\&\eqref{L10.1} together with \eqref{AT1.4} and using Young's inequality, the above estimate simplifies to
\begin{eqnarray}
\lefteqn{\left(\int_0^1 \la|\grad^3 \theta|^{4/3}\ra\, dz\right)^{3/4}}\nonumber\\
&\lesssim& M\left(\int_0^H \la|f|^{4/3}\ra\, dz\right)^{3/4} + e^{-M}\left(\int_0^H \la|\grad f|^{4/3}\ra\, dz\right)^{3/4}\label{T2.6}\\
&&\mbox{} + N^{3/2}\left(\int_0^4 \la|\grad u|^{2}\ra\, dz\right)^{3/4} + e^{-3N}\left(\int_0^H \la|\grad u|^{4}\ra\, dz\right)^{3/4}+1.\label{T2.7}
\end{eqnarray}
The velocity terms in \eqref{T2.7} are easily controlled. Indeed, we apply \eqref{Aeq:3} and \eqref{AL3.1} and get
\begin{eqnarray}
\lefteqn{N^{3/2} \left(\int_0^4 \la|\grad u|^{2}\ra\, dz\right)^{3/4} + e^{-3N}\left(\int_0^H \la|\grad u|^{4}\ra\, dz\right)^{3/4}+1}\nonumber\\
&\lesssim& N^{3/2} \ln^{\gamma} H + e^{-3N} H^{15/4} + 1\hspace{11em}\nonumber\\
&\lesssim& \ln^{\gamma} H\label{T2.8},
\end{eqnarray}
if we choose $N=\frac54\ln H$ and use $H\gg1$ in the last inequality. We now turn to the estimate of the terms in \eqref{T2.6}. By the definition of $f$ and $\eta$ and using the maximum principle for the temperature, i.e., \eqref{AT1.4}, and $H\gg1$, we have
\[
|f|\;\lesssim\; |u| + |\grad u| + |u||\grad\theta| + |\grad u||\grad\theta| + |\grad\theta| + |\grad^2\theta|+1,
\]
and $f$ is supported in $[0,2]$. Therefore, applying Jensen's and Young's inequality, we estimate
\begin{eqnarray*}
\lefteqn{\left(\int_0^H \la|f|^{4/3}\ra\, dz\right)^{3/4}}\\
&\lesssim& \left(\int_0^2 \la |u|^{2}\ra\, dz\right)^{1/2} + \left(\int_0^2 \la |\grad u|^{2}\ra\, dz\right)^{1/2}\\
&&\mbox{}+\left(\int_0^2 \la |\grad\theta|^{4}\ra\, dz\right)^{1/4} + \left(\int_0^2 \la |\grad^2\theta|^{2}\ra\, dz\right)^{1/2}+1.
\end{eqnarray*}
It remains to invoke \eqref{Aeq:3}, \eqref{T2.3},\eqref{T2.5}, and $H\gg1$ to deduce
\begin{equation}\label{T2.9}
\left(\int_0^H \la|f|^{4/3}\ra\, dz\right)^{3/4} \;\lesssim\; \ln^{\gamma}H.
\end{equation}
We finally investigate the second term in \eqref{T2.6}. Proceeding as in the previous computation, but slightly more crudely, we estimate
\begin{eqnarray*}
|\grad f| &\lesssim& |u| + |u||\grad\theta|+ |u||\grad^2 \theta| + |\grad u| + |\grad u ||\grad\theta| + |\grad u||\grad^2\theta|\\
&&\mbox{} + |\grad^2 u|+ |\grad^2 u||\grad\theta| + |\grad\theta|+  |\grad^2\theta| + |\grad^3\theta|+1. 
\end{eqnarray*}
We skip the details of how to estimate the $L_{4/3}$ norm of $\grad f$ step-by-step. Via standard inequalities and the global bounds \eqref{AL3.1} and \eqref{AL9.2}, we can easily derive a bound of the form
\begin{equation}\label{T2.10}
\left(\int_0^H \la|\grad f|^{4/3}\ra\, dz\right)^{3/4} \;\lesssim\; H^q
\end{equation}
for some $q>0$. Hence, choosing $M=q\ln H$ and substituting \eqref{T2.8}--\eqref{T2.10} into \eqref{T2.6}\&\eqref{T2.7} yields
\[
\left(\int_0^1 \la|\grad^3 \theta|^{4/3}\ra\, dz\right)^{3/4}\; \lesssim\; \ln^{\gamma} H .
\]

\medskip

{\em Step 4. Bounds on the fourth-order derivatives.} 
The derivation of the bound on the fourth-order derivatives proceeds very much along the lines of that on the third-order derivatives. We differentiate \eqref{T2.11} w.r.t.\ $x_j$ for some $j\in\{1,\dots,d\}$:
\[
\partial_t\partial_{ij}(\eta\theta) + u\cdot\grad\partial_{ij}(\eta\theta)-\laplace\partial_{ij}(\eta\theta)\;=\; f,
\]
where $f$ is such that
\[
|f| \;\lesssim\; |u|^2 + |\grad u|^2 + |\grad^2 u|^2  + |\grad\theta|^4 + |\grad^2\theta|^2  + |\grad^3\theta|^{4/3} + 1,
\]
and
\[
|\grad f|\; \lesssim\; |u|^4 + |\grad u|^4 + |\grad^2 u|^2+ |\grad^3 u|^2+ |\grad\theta|^2 + |\grad^2\theta|^2 + |\grad^3\theta|^{4/3} + |\grad^4\theta|+ 1,
\]
and $f$ is supported in $[0,2]$. Eventually after mollification, we may apply Proposition \ref{AP2} with $r=1$ and $\delta=2$ and thus \eqref{AP2.0dis} yields, recollecting the symmetries in $z=0$:
\begin{eqnarray*}
\lefteqn{\int_0^H \la|\grad^2 \partial_{ij}(\eta\theta)|\ra\, dz}\\
&\lesssim&M\int_0^H \la|f|\ra\, dz\\
&&\mbox{} +e^{-M} \int_0^H \la|\grad f|\ra\, dz\\
&&\mbox{}+\left(\int_0^H \la( \partial_{ij}(\eta\theta))^2\ra\, dz\right)^{1/2}\\
&&\mbox{}+N\left(\int_0^4 \la|\grad u|^2\ra\, dz\right)^{1/2}\left(\int_0^H \la( \partial_{ij}(\eta\theta))^2\ra\, dz\right)^{1/2}\\
&&\mbox{}+e^{-N}\left(\int_0^4 \la|\grad u|^4\ra\, dz\right)^{1/4}\left(\int_0^H \la|\grad \partial_{ij}(\eta\theta)|^{4/3}\ra\, dz\right)^{3/4}.
\end{eqnarray*}
In the case where $\partial_z f$ is not defined as a weak derivative, we argue as in the proof of Lemma \ref{AL9}. Summing over all $i,j$ and estimating with the help of \eqref{L10.1},\eqref{AT1.4},\eqref{L9.1} and Young's inequality, we obtain
\begin{eqnarray*}
\lefteqn{\int_0^H \la|\grad^2 \partial_{ij}(\eta\theta)|\ra\, dz}\\
&\lesssim&M\int_0^H \la|f|\ra\, dz +e^{-M} \int_0^H \la|\grad f|\ra\, dz\\
&&\mbox{}+N^2\int_0^4 \la|\grad u|^2\ra\, dz +e^{-4N}\int_0^4 \la|\grad u|^4\ra\, dz + 1.
\end{eqnarray*}
Local bounds on the $\grad^2 u$-terms can be obtained via the Ehrling estimate
\[
\int_0^4 \la|\grad^2 u|^2\ra\, dz\;\lesssim\; \int_0^4 \la|\grad u|^2\ra\, dz +\int_0^4 \la|\grad^3 u|^2\ra\, dz.
\]
Proceeding as in the previous steps and observing that the third-order derivatives of $u$ can be controlled via \eqref{AL6.1} and \eqref{110}, we conclude that
\[
\int_0^1\la|\grad^4\theta|\ra\, dz \;\lesssim\; M\ln^{\gamma} H + e^{-M}H^q + N^2\ln^{\gamma} H + e^{-4N}H^5 +1.
\]
We choose $M=q\ln H$, $N=\frac54\ln H$ and deduce
\[
\int_0^1 \la|\grad^4 \theta|\ra\, dz\;\lesssim\;\ln^{\gamma}H.
\]
\end{proof}

We finally turn to the proofs of Propositions \ref{AP2} and \ref{AP3}. The proof of the latter one in standard. We display the easy argument for the convenience of the reader:

\begin{proof}[Proof of Proposition \ref{AP3}.]
We differentiate \eqref{AP3.1} w.r.t. the $x_i$ coordinate, e.g.,
\[
\partial_t\partial_i \zeta + u\cdot \grad \partial_i \zeta - \laplace \partial_i \zeta\;=\; \partial_i f - \partial_i u\cdot \grad \zeta.
\]
We multiply this equation by $\partial_i\zeta$ and integrate in space and time:
\[
- \int_{-H}^H \la \partial_i\zeta \laplace\partial_i \zeta\ra\, dz \;\le\; \int_{-H}^H \la \partial_i\zeta \partial_i f\ra\, dz -\int_{-H}^H \la \partial_i \zeta\partial_i u\cdot \grad  \zeta \ra\, dz.
\]
Here we have used that $\la \partial_t (\partial_i\zeta)^2\ra\ge0$ due to the long-time averages. Moreover, the transport term drops out because of  \eqref{AP3.1bis}. Integration by parts and \eqref{AP3.1bis} yield
\[
 \int_{-H}^H \la |\grad\partial_i\zeta |^2 \ra\, dz \;\le\; - \int_{-H}^H \la \partial_i^2 \zeta  f\ra\, dz  + \int_{-H}^H \la\grad \partial_i \zeta\cdot \partial_i u   \zeta \ra\, dz.
\]
It remains to apply the H\"older and Young inequalities and sum over $1\le i\le d$ to obtain \eqref{AP3.2} via \eqref{AP3.1cis}.
\end{proof}

The proof of Proposition \ref{AP2} requires some preparation. We first provide new notation. For a Schwartz function $\psi(x)$ in $\R^d$, we define its Fourier transform $\F \psi(q)$, $q\in\R^d$, via
\[
\F\psi(q):= \int_{\R^d}\psi(x) e^{iq\cdot x}\, dx.
\]
For a periodic function $\zeta(x)$ with cell of periodicity $Q=[0,a_1]\times\dots\times[0,a_d]$ its Fourier transform $\F\zeta(q)$, $q\in 2\pi/a_1\Z\times\dots\times2\pi/a_d\Z$, is defined by
\[
\F \zeta(q)\;:=\; \frac1{|Q|} \int_Q \zeta(x)e^{iq\cdot x}\, dx.
\]

\medskip

Next, we derive an energy-estimate for narrow-banded solutions to convection-diffusion equations.

\begin{lemma}\label{AL7}
Let $\zeta$, $u$, and $f$ be periodic functions with vertical period $2H$ satisfying
\begin{equation}\label{AL7.0}
\del_t \zeta + u \cdot \grad\zeta - \laplace \zeta\; =\; f.
\end{equation}
Assume that $\zeta$ is narrow-banded in Fourier space in the sense that
\begin{equation}\label{AL7.0bis}
\F \zeta (q)\; =\; 0 \mbox{ for all }q\not\in B_{\sigma}(q_0)
\end{equation}
for some $e^{-1} <|q_0|\le e$ and $0<\sigma\ll1$. For $1\le r<\infty$ it holds
\begin{equation}\label{AL7.1}
\int_{-H}^H \la |\zeta|^{r}\ra\, dz
\;\lesssim\;
\int_{-H}^H \la |f|^{r}\ra\, dz + \int_{-H}^H \la|\div u||\zeta|^{r}\ra\, dz.
\end{equation}
\end{lemma}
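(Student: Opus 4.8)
## Proof proposal for Lemma \ref{AL7}

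The plan is to run a standard $L_r$ energy estimate directly on equation \eqref{AL7.0}, exploiting the narrow-banded assumption \eqref{AL7.0bis} to convert the dissipative term $-\laplace\zeta$ into a coercive lower bound $\sim\int\la|\zeta|^r\ra$. First I would multiply \eqref{AL7.0} by $|\zeta|^{r-2}\zeta$ (interpreting this appropriately when $r<2$, or arguing by a regularization/approximation since $\zeta$ is assumed periodic and smooth enough) and integrate in horizontal space and over a long time interval. The time-derivative term contributes $\la\partial_t\tfrac1r|\zeta|^r\ra\ge 0$ because we work with $\limsup$ long-time averages and $\zeta$ is bounded, so it can be dropped. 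The transport term $\la u\cdot\grad\zeta\,|\zeta|^{r-2}\zeta\ra = \tfrac1r\la u\cdot\grad|\zeta|^r\ra = -\tfrac1r\la(\div u)|\zeta|^r\ra$ after integration by parts (using vertical periodicity), which produces exactly the $\int_{-H}^H\la|\div u||\zeta|^r\ra\,dz$ term on the right-hand side. The forcing term is handled by H\"older: $\la f|\zeta|^{r-1}\ra\le\la|f|^r\ra^{1/r}\la|\zeta|^r\ra^{(r-1)/r}$, so after integrating in $z$ and using Young's inequality it contributes $\int\la|f|^r\ra\,dz$ plus a small multiple of $\int\la|\zeta|^r\ra\,dz$ to be absorbed.

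The crux is the diffusion term: I need $-\int_{-H}^H\la|\zeta|^{r-2}\zeta\,\laplace\zeta\ra\,dz\gtrsim\int_{-H}^H\la|\zeta|^r\ra\,dz$, i.e., a lower Poincar\'e-type bound. For $r=2$ this is immediate from Plancherel and \eqref{AL7.0bis}: $\int\la|\grad\zeta|^2\ra\,dz=\sum_q|q|^2\|\F\zeta(q)\|_{L^2_z}^2\ge e^{-2}\sum_q\|\F\zeta(q)\|^2\sim\int\la\zeta^2\ra\,dz$ since every active $q$ satisfies $|q|>e^{-1}$. For general $r$ one cannot use Plancherel directly, so instead I would write $\zeta=\Phi*\zeta$ where $\widehat\Phi$ is a smooth compactly supported bump equal to $1$ on the annulus $\{e^{-1}<|q|\le e\}$ thickened by $\sigma$, and similarly write $\zeta = \Psi*(-\laplace\zeta)$ with $\widehat\Psi(q)=|q|^{-2}\widehat\Phi(q)$, a Schwartz function. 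Then $\|\zeta\|_{L^r}\lesssim\|\Psi\|_{L^1}\|\laplace\zeta\|_{L^r}\lesssim\|\laplace\zeta\|_{L^r}$ by Young's convolution inequality (uniformly in $H$ and $\sigma\le 1$, since $\|\Psi\|_{L^1}$ is bounded), and combining with the elementary pairing $-\int\la|\zeta|^{r-2}\zeta\laplace\zeta\ra\,dz = (r-1)\int\la|\zeta|^{r-2}|\grad\zeta|^2\ra\,dz \ge 0$... actually cleaner: use $\int\la|\zeta|^{r-2}\zeta(-\laplace\zeta)\ra\,dz$ and bound below by H\"older the other way, $\ge \int\la|\zeta|^r\ra\,dz \cdot(\text{something})$ — more robustly, estimate $\int\la|\zeta|^r\ra\,dz = \int\la|\zeta|^{r-2}\zeta\cdot\zeta\ra\,dz\le \|\,|\zeta|^{r-1}\|_{r'}\|\zeta\|_r$ and use the Fourier-multiplier bound $\|\zeta\|_r\lesssim\|\laplace\zeta\|_r$ together with the energy identity $\int\la(-\laplace\zeta)|\zeta|^{r-2}\zeta\ra\,dz\le\int\la|f|\,|\zeta|^{r-1}\ra\,dz+\tfrac1r\int\la|\div u|\,|\zeta|^r\ra\,dz$.

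Putting the pieces together: the energy identity gives
\[
c\int_{-H}^H\la|\zeta|^r\ra\,dz \;\le\; \int_{-H}^H\la(-\laplace\zeta)|\zeta|^{r-2}\zeta\ra\,dz \;\le\; \int_{-H}^H\la|f|\,|\zeta|^{r-1}\ra\,dz + \frac1r\int_{-H}^H\la|\div u|\,|\zeta|^r\ra\,dz,
\]
where the first inequality uses the multiplier bound $\|\zeta\|_{L^r_x}\lesssim\|\laplace\zeta\|_{L^r_x}$ valid because of \eqref{AL7.0bis}; then H\"older on the $|f|$-term followed by Young's inequality absorbs a small multiple of $\int\la|\zeta|^r\ra\,dz$ into the left-hand side, yielding \eqref{AL7.1}. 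The main obstacle I anticipate is making the "Bernstein/Poincar\'e on an annulus in $L^r$" step fully rigorous with constants independent of $H$ and of $\sigma$: one must check that the convolution kernel $\Psi$ (essentially a Schwartz function adapted to the fixed annulus $\{e^{-1}<|q|\le e\}$, independent of $H$) acts boundedly on periodic $L^r$ functions with norm controlled by $\|\Psi\|_{L^1(\R^d)}$, which requires care with the periodic-versus-whole-space Fourier setup but is routine. A secondary technical point is the legitimacy of testing with $|\zeta|^{r-2}\zeta$ when $r<2$ (mild singularity at $\zeta=0$), handled by the usual $(\zeta^2+\epsilon)^{(r-2)/2}\zeta$ regularization and passing to the limit.
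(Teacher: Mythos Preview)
Your overall architecture matches the paper's: test \eqref{AL7.0} with a smooth approximation of $|\zeta|^{r-2}\zeta$, drop the time derivative by the $\limsup$ convention, turn the transport term into $\tfrac1r\int\la(\div u)|\zeta|^r\ra\,dz$ via integration by parts, and estimate the forcing by H\"older. The only substantive step is the coercivity
\[
-\int_{-H}^H\la(\sign\zeta)|\zeta|^{r-1}\laplace\zeta\ra\,dz\;\gtrsim\;\int_{-H}^H\la|\zeta|^r\ra\,dz,
\]
and this is precisely where your argument has a gap.

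You correctly observe that the narrow-band hypothesis yields a Bernstein-type multiplier bound $\|\zeta\|_{L^r}\lesssim\|\laplace\zeta\|_{L^r}$ via $\zeta=\Psi\ast(-\laplace\zeta)$ with $\|\Psi\|_{L^1}\lesssim 1$. But this norm inequality does \emph{not} imply the pairing lower bound above: by H\"older one only gets $\int(-\laplace\zeta)|\zeta|^{r-2}\zeta\le\|\laplace\zeta\|_r\|\zeta\|_r^{r-1}$, an upper bound. Nor can you rewrite $\int(-\laplace\zeta)|\zeta|^{r-2}\zeta=\tfrac{4(r-1)}{r^2}\int|\grad(|\zeta|^{r/2})|^2$ and apply an $L^2$ Poincar\'e inequality, because $|\zeta|^{r/2}$ is a nonlinear function of $\zeta$ and is no longer frequency-localized. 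Your sketch wanders between several ideas here without closing any of them.

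The paper's resolution exploits the \emph{smallness} of $\sigma$, not merely the annular support. One picks a Schwartz function $\psi$ with $\F\psi\equiv 1$ on $B_1(0)$, sets $\psi_\sigma(x)=\sigma^d\psi(\sigma x)e^{-iq_0\cdot x}$ so that $\zeta=\psi_\sigma\ast\zeta$, and computes directly that
\[
\int_{\R^d}\big|{-}\laplace\psi_\sigma-|q_0|^2\psi_\sigma\big|\,dx\;\lesssim\;\sigma.
\]
By the convolution estimate this gives $\big\|{-}\laplace\zeta-|q_0|^2\zeta\big\|_{L^r}\lesssim\sigma\|\zeta\|_{L^r}$, whence
\[
-\int\la(\sign\zeta)|\zeta|^{r-1}\laplace\zeta\ra\,dz
=|q_0|^2\int\la|\zeta|^r\ra\,dz+\int\la(\sign\zeta)|\zeta|^{r-1}({-}\laplace\zeta-|q_0|^2\zeta)\ra\,dz
\ge(|q_0|^2-C\sigma)\int\la|\zeta|^r\ra\,dz,
\]
and the right-hand side is $\gtrsim\int\la|\zeta|^r\ra\,dz$ once $\sigma$ is chosen small relative to $e^{-2}\le|q_0|^2$. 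This is the missing idea: use the narrow band to replace $-\laplace$ by the scalar $|q_0|^2$ up to an $O(\sigma)$ remainder in $L^r$, which is exactly what is needed for the pairing.
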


\begin{proof}[Proof of Lemma \ref{AL7}] Let $A(s)$ be a smooth approximation of
\[
A(s)\; =\; \frac1r|s|^{r}.
\]
In view of \eqref{AL7.0}, we have that $\partial_t A(\zeta) + u\cdot \grad A(\zeta) - \laplace \zeta A'(\zeta)= f A'(\zeta)$, and thus after integration, integration by parts, and time averaging:
\[
- \int_{-H}^H \la \laplace \zeta A'(\zeta)\ra\, dz\; \le\; \int_{-H}^H \la (\div u)A(\zeta)\ra\, dz + \int_{-H}^H \la fA'(\zeta)\ra\, dz.
\]
Notice that the term involving the time derivative drops out in the time average, since $\la \del_t A(\zeta)\ra\ge0$ . Now we may carry out the approximation for $A$. Obviously, the assertion in \eqref{AL7.1} follows from
\begin{equation}\label{AL7.2}
-\int_{-H}^H \la \laplace \zeta A'(\zeta)\ra\, dz 
\;=\;
 - \int_{-H}^H \la \laplace \zeta (\sign \zeta)|\zeta|^{r-1}\ra\, dz
\;\gtrsim\;
\int_{-H}^H \la|\zeta|^{r}\ra\, dz
\end{equation}
and
\begin{eqnarray*}
\int_{-H}^H \la fA'(\zeta)\ra\, dz
&=&
\int_{-H}^H \la f(\sign \zeta)|\zeta|^{r-1}\ra\, dz\\
& \le& \left(\int_{-H}^H \la |f|^{r}\ra\, dz\right)^{1/r}\left( \int_{-H}^H\la |\zeta|^{r}\ra\, dz\right)^{(r-1)/r}
\end{eqnarray*}
via Young's inequality (if $r>1$).

\medskip

In our argumentation for \eqref{AL7.2}, we follow \cite[Lemma 1]{OttoRamos} (or \cite[Proposition 2]{Otto09}). We first show that
\begin{equation}\label{AL7.3}
\int_{-H}^H \la | -\laplace \zeta - |q_0|^2 \zeta|^{r} \ra\, dz 
\; \lesssim\; \sigma^{r} \int_{-H}^H \la |\zeta|^{r}\ra\, dz.
\end{equation}
For this purpose, we select a Schwartz function $\psi$ that satisfies $(\F \psi)(q) \;=\; 1$ for $|q|\le1$ and define
\[
\psi_{\sigma}(x)\; =\; \sigma^d \psi(\sigma x)e^{-i q_0\cdot x}.
\]
An easy calculation shows that $(\F \psi_{\sigma})(q)=1$ for $q\in B_{\sigma}(q_0)$. Therefore, by the narrow-bandedness assumption \eqref{AL7.0bis}, $\psi_{\sigma}$ leaves $\zeta$ invariant under convolution, i.e.,
\[
\zeta = \psi_{\sigma}\ast\zeta.
\]
It follows that
\[
-\laplace \zeta - |q_0|^2 \zeta\; =\; (-\laplace \psi_{\sigma} - |q_0|^2\psi_{\sigma})\ast \zeta,
\]
and thus, applying the convolution estimate yields
\[
\int_{-H}^H \la |-\laplace \zeta - |q_0|^2 \zeta|^{r}\ra\, dz
\;\le\;
\left(\int_{\R^d} |-\laplace \psi_{\sigma} - |q_0|^2\psi_{\sigma}|\, dx\right)^{r} \int_{-H}^H \la |\zeta|^{r}\ra\, dz.
\]
For \eqref{AL7.3}, we have to show that
\[
\int_{\R^d} |-\laplace \psi_{\sigma} - |q_0|^2\psi_{\sigma}|\, dx
\;\lesssim\; \sigma.
\]
Indeed, since $-\laplace (e^{-iq_0\cdot x})= |q_0|^2 e^{-iq_0\cdot x}$, we obtain
\[
(-\laplace \psi_{\sigma} - |q_0|^2\psi_{\sigma})(x)
\;=\; \left(2i\sigma^{d+1} q_0 \cdot (\grad \psi)(\sigma x) - \sigma^{d+2} (\laplace\psi)(\sigma x)\right) e^{-iq_0\cdot x },
\]
so that, because of $|q_0|\le e$ and $\sigma < 1$ and since $\psi$ is a Schwartz function, we have
\[
\int_{\R^d} |-\laplace \psi_{\sigma} - |q_0|^2\psi_{\sigma}|\, dx
\;\lesssim\;
\sigma |q_0| \int_{\R^d} |\grad \psi|\, dx + \sigma^2 \int_{\R^d} |\grad^2\psi|\, dx
\;\lesssim\;
\sigma.
\]
It remains to argue how \eqref{AL7.3} implies \eqref{AL7.2}. It is
\begin{eqnarray*}
\lefteqn{- \int_{-H}^H \la (\sign \zeta) |\zeta|^{r-1} \laplace \zeta\ra\, dz}\\
&=& \int_{-H}^H \la (\sign \zeta) |\zeta|^{r-1} |q_0|^2 \zeta\ra\, dz + \int_{-H}^H \la (\sign \zeta) |\zeta|^{r-1}(-\laplace \zeta -  |q_0|^2 \zeta)\ra\, dz \\
&\ge& |q_0|^2 \int_{-H}^H \la|\zeta|^{r}\ra\, dz\\
&&\mbox{}- \left(\int_{-H}^H \la|\zeta|^{r}\ra\, dz\right)^{(r-1)/r}\left(\int_{-H}^H \la | -\laplace \zeta - |q_0|^2 \zeta|^{r} \ra\, dz \right)^{1/r}\\
&\stackrel{\eqref{AL7.3}}{\ge}& (|q_0|^2 - C\sigma)\int_{-H}^H \la |\zeta|^{r}\ra\, dz
\end{eqnarray*}
for some constant $C>0$. We choose $\sigma$ small enough, such that $2C\sigma\le e^{-2}$ and obtain \eqref{AL7.2} because of $|q_0|\ge e^{-1}$.
\end{proof}

Another tool in the proof of Proposition \ref{AP2} are the following commutator estimates:

\begin{lemma}\label{AL8}
Let $[u\cdot, \phi\ast]\xi$ denote the commutator of the operations ``multiplication with $u$'' and ``convolution with $\phi$'', that is,
\[
[u\cdot, \phi\ast]\xi\; =\; u\cdot(\xi \ast \phi) - (u\cdot \xi)\ast \phi.
\]
Then we have the estimates
\begin{eqnarray}
\lefteqn{\left(\int_{-H}^H \la |[u\cdot, \phi\ast]\grad\zeta|^{r}\ra\, dz\right)^{1/r}}\nonumber\\
&\lesssim&\left(\int_{\R^d} |\grad \phi||\tilde x|\, d\tilde x +\int_{\R^d} |\phi|\, d\tilde x \right)\nonumber\\
&&\mbox{}\times
 \left(\int_{-H}^H \la |\zeta|^{rp/(p-r)}\ra\, dz\right)^{(p-r)/(rp)}\left(\int_{-H}^H \la |\grad u|^p\ra\, dz\right)^{1/p},\label{AL8.1}
\end{eqnarray}
and
\begin{eqnarray}
\lefteqn{\left(\int_{-H}^H \la |[u\cdot, \phi\ast]\grad\zeta|^{r}\ra\, dz\right)^{1/r}}\nonumber\\
&\lesssim&\left(\int_{\R^d} | \phi||\tilde x|\, d\tilde x\right)\nonumber\\
&&\mbox{}\times
 \left(\int_{-H}^H \la |\grad \zeta|^{rp/(p-r)}\ra\, dz\right)^{(p-r)/(rp)}\left(\int_{-H}^H \la |\grad u|^p\ra\, dz\right)^{1/p},\label{AL8.2}
\end{eqnarray}
for any $1\le r<\infty$ and $1<p<\infty$.
\end{lemma}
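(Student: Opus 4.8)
The plan is to rewrite the commutator as a single convolution-type integral against \emph{increments} of $u$, and then to estimate it by Minkowski's integral inequality followed by H\"older's inequality. Since the convolution acts only in the space variables, at each fixed time one has the pointwise identity
\[
([u\cdot,\phi\ast]\grad\zeta)(x)\;=\;\int_{\R^d}\phi(\tilde x)\,\bigl(u(x)-u(x-\tilde x)\bigr)\cdot\grad\zeta(x-\tilde x)\,d\tilde x,
\]
obtained by inserting $u(x-\tilde x)\cdot\grad\zeta(x-\tilde x)$ into the definition of $(u\cdot\grad\zeta)\ast\phi$. Writing $\|g\|:=\bigl(\int_{-H}^H\la|g|^r\ra\,dz\bigr)^{1/r}$ for the norm on the left-hand sides, the key structural fact is that $\|\cdot\|$ is invariant under arbitrary translations of the argument: in the horizontal variables by translation invariance of $\la\cdot\ra$, and in the vertical variable because the integration $\int_{-H}^H\cdot\,dz$ runs over a full period of the $2H$-periodic functions. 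For \eqref{AL8.2} I would bound the increment by $|u(x)-u(x-\tilde x)|\le|\tilde x|\int_0^1|\grad u(x-s\tilde x)|\,ds$, apply Minkowski's inequality to move $\|\cdot\|$ inside the $(\tilde x,s)$-integration, and then use H\"older with the conjugate pair $\tfrac1r=\tfrac1p+\tfrac{p-r}{rp}$ on the product $|\grad u(\cdot-s\tilde x)|\,|\grad\zeta(\cdot-\tilde x)|$; translation invariance turns the two factors into $\|\grad u\|_{L^p}$ and $\|\grad\zeta\|_{L^{rp/(p-r)}}$, while the remaining weight is $\int_0^1\!\int_{\R^d}|\phi(\tilde x)|\,|\tilde x|\,d\tilde x\,ds=\int_{\R^d}|\phi|\,|\tilde x|\,d\tilde x$, which is exactly the prefactor in \eqref{AL8.2}.

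For \eqref{AL8.1} the point is to shift the derivative off $\zeta$ and onto $\phi$. Using $\grad\zeta(x-\tilde x)=-\grad_{\tilde x}[\zeta(x-\tilde x)]$ and integrating by parts in $\tilde x$ (no boundary terms appear, since $\phi$ decays and has finite first moment while $u$ and $\zeta$ are bounded), the identity above becomes
\[
([u\cdot,\phi\ast]\grad\zeta)(x)\;=\;\int_{\R^d}\Bigl(\grad\phi(\tilde x)\cdot\bigl(u(x)-u(x-\tilde x)\bigr)+\phi(\tilde x)\,(\div u)(x-\tilde x)\Bigr)\,\zeta(x-\tilde x)\,d\tilde x,
\]
where I have used $\grad_{\tilde x}\cdot\bigl(u(x)-u(x-\tilde x)\bigr)=(\div u)(x-\tilde x)$. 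The two resulting terms are then handled exactly as before: in the first I estimate the increment by $|\tilde x|\int_0^1|\grad u(x-s\tilde x)|\,ds$, apply Minkowski and the same H\"older pairing (now between $\grad u$ and $\zeta$), producing the weight $\int_{\R^d}|\grad\phi|\,|\tilde x|\,d\tilde x$ times $\|\grad u\|_{L^p}\|\zeta\|_{L^{rp/(p-r)}}$; in the second I use $|\div u|\le|\grad u|$ pointwise, then Minkowski and H\"older, producing the weight $\int_{\R^d}|\phi|\,d\tilde x$ times the same product. Adding the two contributions yields \eqref{AL8.1}.

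I expect the only genuinely delicate steps to be bookkeeping rather than conceptual. First, one must justify Minkowski's integral inequality for the functional $\|\cdot\|$ in spite of the $\limsup$ in time hidden in $\la\cdot\ra$; the standard way around this is to run the argument with the finite-time averages $\frac1{t_0}\int_0^{t_0}$, which are honest probability measures, and to pass to the $\limsup$ only at the very end. Second, one should record that the integration by parts in $\tilde x$ is legitimate, i.e.\ that $\tilde x\mapsto\phi(\tilde x)\bigl(u(x)-u(x-\tilde x)\bigr)\zeta(x-\tilde x)$ vanishes at infinity, which is immediate from the decay of $\phi$ and the boundedness of $u$ and $\zeta$. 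Everything else is the mean value bound for $u$ along segments, translation invariance of the norm, and H\"older's inequality, all routine.
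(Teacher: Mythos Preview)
Your proof is correct and follows essentially the same approach as the paper: the same pointwise commutator identity, the same integration by parts in $\tilde x$ for \eqref{AL8.1}, the same mean-value bound on increments of $u$, and then H\"older plus translation invariance of the norm. The only cosmetic difference is that you invoke Minkowski's integral inequality where the paper writes out the equivalent H\"older-against-the-weight-measure plus Fubini argument by hand; your remark about the $\limsup$ subtlety is a point the paper passes over in silence.
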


\begin{proof}[Proof of Lemma \ref{AL8}] We only prove the first estimate. The second one is shown similarly. We start with a pointwise statement:
\begin{eqnarray*} 
\lefteqn{ \left(u\cdot(\grad\zeta \ast \phi) - (u\cdot \grad\zeta)\ast \phi\right)(x)}\\
&=& \int_{\R^d}\phi(\tilde x) \grad_x\zeta(x-\tilde x)\cdot(u(x) - u(x-\tilde x))\, d\tilde x\\
&=&  \int_{\R^d}\grad_{\tilde x}\phi(\tilde x)\cdot \zeta(x-\tilde x)(u(x) - u(x-\tilde x))\, d\tilde x\\
&&\mbox{} + \int_{\R^d}\phi(\tilde x) \zeta(x-\tilde x)(\grad_x\cdot u)(x-\tilde x)\, d\tilde x\\
&\le&  \int_{\R^d}|\grad\phi(\tilde x)|| \zeta(x-\tilde x)||\tilde x|\int_0^1|\grad u(x-s\tilde x)|\, dsd\tilde x\\
&&\mbox{} + \int_{\R^d}|\phi(\tilde x)|| \zeta(x-\tilde x)||(\grad\cdot u)(x-\tilde x)|\, d\tilde x.
\end{eqnarray*}
This implies by the triangle inequality
\begin{eqnarray*}
\lefteqn{\left(\int_{-H}^H \la |[u\cdot, \phi\ast]\grad\zeta|^{r}\ra\, dz\right)^{1/r}}\\
&\le& \left(\int_{-H}^H \left\la\left(\int_{\R^d}|\grad\phi(\tilde x)|| \zeta(\tacka-\tilde x)||\tilde x|\int_0^1|\grad u(\tacka-s\tilde x)|\, dsd\tilde x\right)^r\right\ra\, dz\right)^{1/r}\\
&&\mbox{} +\left(\int_{-H}^H \left\la\left(\int_{\R^d}|\phi(\tilde x)|| \zeta(\tacka-\tilde x)||(\grad\cdot u)(\tacka-\tilde x)|\, d\tilde x\right)^r\right\ra\, dz\right)^{1/r}.
\end{eqnarray*}
We consider the first integral only. The estimate of the second one is obtained in an analogous way. We have via H\"older inequality and Fubini's Theorem
\begin{eqnarray*}
\lefteqn{ \int_{-H}^H \left\la \left(\int_{\R^d}|\grad\phi(\tilde x)|| \zeta(\tacka-\tilde x)||\tilde x|\int_0^1|\grad u(\tacka-s\tilde x)|\, dsd\tilde x\right)^r\right\ra \, dz}\\
&\le& \left(\int_{\R^d}|\grad\phi||\tilde x|\, d\tilde x\right)^{r-1}\\
&&\mbox{}\times
\int_{-H}^H \left\la  \int_{\R^d}|\grad\phi(\tilde x)||\tilde x|| \zeta(\tacka-\tilde x)|^{r}\int_0^1|\grad u(\tacka-s\tilde x)|^{r}\, dsd\tilde x\right\ra\, dz\\
&=&\left(\int_{\R^d}|\grad\phi||\tilde x|\, d\tilde x\right)^{r-1}\\
&&\mbox{}\times
\int_{\R^d}|\grad\phi(\tilde x)||\tilde x|\int_0^1\int_{-H}^H \left\la  | \zeta(\tacka-\tilde x)|^{r}|\grad u(\tacka-s\tilde x)|^{r}\, \right\ra\, dzdsd\tilde x.
\end{eqnarray*}
We use the H\"older inequality again and the translation invariance to estimate the inner integrals:
\begin{eqnarray*}
\lefteqn{\int_{-H}^H\left\la  | \zeta(\tacka-\tilde x)|^{r}|\grad u(\tacka-s\tilde x)|^{r}\, \right\ra\, dz}\\
&\le& \left(\int_{-H}^H \la |\zeta(\tacka-\tilde x)|^{rp/(p-r)}\ra\, dz\right)^{(p-r)/p} \left(\int_{-H}^H\la |\grad u(\tacka-s\tilde x)|^p \ra\, dz\right)^{r/p}\\
&=&\left(\int_{-H}^H \la |\zeta|^{rp/(p-r)}\ra\, dz\right)^{(p-r)/p}\left(\int_{-H}^H \la |\grad u|^p\ra\, dz\right)^{r/p}.
\end{eqnarray*}
We combine the above estimates and get
\begin{eqnarray*}
\lefteqn{ \int_{-H}^H \left\la \left(\int_{\R^d}|\grad\phi(\tilde x)|| \zeta(\tacka-\tilde x)||\tilde x|\int_0^1|\grad u(\tacka-s\tilde x)|\, dsd\tilde x\right)^r\right\ra\, dz}\\
&\le& \left(\int_{\R^d}|\grad\phi||\tilde x|\, d\tilde x\right)^{r}
 \left(\int_{-H}^H \la |\grad u|^p\ra\, dz\right)^{r/p}\left(\int_{-H}^H \la |\zeta|^{rp/(p-r)}\ra\, dz\right)^{(p-r)/p}.
\end{eqnarray*}
\end{proof}

We are now in the position to prove Proposition \ref{AP2}.
\begin{proof}[Proof of Proposition \ref{AP2}]
The proof relies on Besov spaces rather than Sobolev spaces. We introduce a (non-dyadic) Littlewood--Paley decomposition. Let $\{\phi_{\ell}(x)\}_{\ell\in \Z}$ be a family of Schwartz functions with the following properties:
\begin{eqnarray}
(\F \phi_0)(q) &\not=& 0\mbox{ only for $q$ with } |q|\in (e^{-1},e), \label{ALP1}\\
(\F \phi_{\ell})(q) &=& (\F \phi_0)(e^{-\ell}q)\mbox{ for all $\ell$ and $q$}, \label{ALP2}\\
\sum_{\ell \in \Z} (\F \phi_{\ell})(q) &=&1\mbox{ for all }q\not= 0.\label{ALP3}
\end{eqnarray}
We refer to \cite[6.1.7\ Lemma]{Bergh} for a construction with dyadic blocks, which easily adapts to the non-dyadic case. The Littlewood--Paley decomposition  $\{\varphi_{\ell}\}_{\ell\in \Z}$ of a periodic function $\varphi$ is defined via
\[
\varphi_{\ell}\; =\; \phi_{\ell} \ast \varphi.
\]
Notice that \eqref{ALP1}--\eqref{ALP3} imply that $\varphi = \la\varphi\ra +\sum_{\ell\in\Z}\varphi_{\ell}$. We also introduce the low-pass filter
\[
\varphi_{0}^{<}\; =\; \sum_{\ell\le -1} \varphi_{\ell}.
\]
We now turn to the proof of \eqref{AP2.0dis}. We invoke the triangle inequality to split and estimate the integral on the l.h.s.\ according to
\begin{eqnarray}
\left(\int_{-H}^H \la |\grad^2 \zeta|^r\ra\, dz\right)^{1/r} &\stackrel{\eqref{AP2.1cis}}{=}& \left(\int_{-\delta}^{\delta} \la |\grad^2 \zeta|^r\ra\, dz\right)^{1/r}\nonumber\\
&\le&\left(\int_{-\delta}^{\delta} \la |\grad^2 \zeta_0^{<}|^r\ra\, dz\right)^{1/r} \label{AP2.2}\\
&&\mbox{} +\sum_{\ell=0}^{N-1}\left(\int_{-H}^{H} \la |\grad^2 \zeta_{\ell}|^r\ra\, dz\right)^{1/r} \label{AP2.3}\\
&&\mbox{}+\sum_{\ell\ge N}\left(\int_{-H}^{H} \la |\grad^2 \zeta_{\ell}|^r\ra\, dz\right)^{1/r} .\label{AP2.4}
\end{eqnarray}

\medskip

We start with the low frequency part, i.e.,  \eqref{AP2.2}. It follows from \eqref{ALP1}--\eqref{ALP3} that
\[
\zeta_0^<\; =\; \phi_0^<\ast\zeta,
\]
where $\phi_0^<$ denotes a Schwartz function with $(\F \phi_0^<)(q)=0$ for $|q|\ge e^{-1}$. This implies in particular
\[
\int |\phi_0^<|\, dx\; <\; \infty
\]
and
\[
|\F( \grad^2 (\phi_0^<\ast \zeta))(q)|\;=\; |q|^2|\F(\phi_0^<\ast \zeta)(q)| \;\le\; |\F ( \phi_0^<\ast \zeta)(q)|,
\]
so that by Plancherel's Theorem and the convolution estimate
\begin{eqnarray*}
\int_{-H}^H \la |\grad^2 \zeta_0^<|^2\ra\, dz &=& \int_{-H}^H \la |\grad^2(\phi_0^<\ast\zeta)|^2\ra\, dz\\
& \le& \int_{-H}^H \la (\phi_0^<\ast\zeta)^2\ra\, dz \\
&\lesssim& \int_{-H}^H \la \zeta^2\ra\, dz.
\end{eqnarray*}
We conclude that
\begin{eqnarray}
\int_{-\delta}^{\delta} \la |\grad^2 \zeta_0^{<}|^r\ra\, dz
&\le& 
\delta^{(2-r)/2}\left(\int_{-\delta}^{\delta} \la |\grad^2 \zeta_0^{<}|^2\ra\, dz \right)^{r/2}\nonumber\\
&\lesssim& \delta^{(2-r)/2}\left(\int_{-H}^H \la \zeta^2\ra\, dz\right)^{r/2}.\label{AP2.5}
\end{eqnarray}

\medskip

We now address the intermediate frequencies, i.e., \eqref{AP2.3}. Our goal is the following estimate:
\begin{eqnarray}
\lefteqn{\sum_{\ell=0}^{N-1}\left(\int_{-H}^H \la |\grad^2\zeta_{\ell}|^r\ra\, d z\right)^{1/r}}\nonumber\\
&\lesssim&
\sum_{\ell=0}^{N-1} \left(\int_{-H}^H \la | f_{\ell}|^r\ra\, d z\right)^{1/r}\nonumber\\
&&\mbox{} + N \left(\int_{-2\delta}^{2\delta} \la |\grad  u|^2\ra\, d z \right)^{1/2}\left(\int_{-H}^H \la  |\zeta|^{2r/(2-r)}\ra\, d z\right)^{(2-r)/2r}.\label{AP2.11}
\end{eqnarray}
Because of \eqref{AP2.1cis}, equation \eqref{AP2.1} is localized in the strip $[-\delta,\delta]\times\R^{d-1}$, so that we may replace $u$ in \eqref{AP2.1} by $\tilde u=  \eta u$, where $\eta=\eta(z)$ denotes a $2H$-periodic extension of a smooth cut-off function satisfying $\eta=1$ for $z\in[-\delta,\delta]$, $\eta=0$ for $z\not\in[-2\delta,2\delta]$, $\sup |\eta|\lesssim1$, and $\sup|\frac{d}{dz}\eta|\lesssim\frac1{\delta}$. We remark that the convection-diffusion equation \eqref{AP2.1} is invariant under the scaling
\[
x=e^{-\ell} \hat x,\quad
t=e^{-2\ell} \hat t,\quad
\zeta = e^{-2\ell}\hat \zeta,\quad
\tilde u = e^{\ell}\hat{ \tilde u},\quad
f = \hat f,
\]
i.e., it holds
\begin{equation}\label{AP2.8}
\partial_{\hat t}\hat \zeta + \hat{\tilde u} \cdot \hat\grad\hat \zeta - \hat\laplace\hat \zeta
\;=\;
\hat f
\end{equation}
in the rescaled domain of height $\hat H = e^{\ell} H$. In a first step, we apply Lemma \ref{AL7} to this rescaled equation. Given some $0<\sigma\ll1$ as in the hypothesis of Lemma \ref{AL7}, we select a finite number of open balls $B_{\sigma}(q_j)_{1\le j\le J}$ that cover the annulus $\{e^{-1} < |q|\le e\}$ and select a family of Schwartz functions $\{\psi_{\sigma, q_j}\}_{1\le j\le J}$ that form a partition of unity subordinate to that covering. We apply $ \psi_{\sigma, q_j}\ast\phi_0\ast$ to \eqref{AP2.8} and write the resulting equation as
\begin{equation}\label{AP2.12}
\partial_{\hat t}\hat \zeta_{\sigma, q_j} + \hat{\tilde u}\cdot\hat\grad\hat\zeta_{\sigma, q_j} -\hat \laplace\hat \zeta_{\sigma, q_j}\; =\;\hat f_{\sigma, q_j} + [\hat{\tilde u}\cdot, \psi_{\sigma, q_j}\ast\phi_0\ast]\hat\grad \hat\zeta,
\end{equation}
where subscript ``$\sigma, q_j$'' indicates the convolution with $\psi_{\sigma, q_j}\ast\phi_0$ and $[u\cdot, \phi\ast]\xi$ denotes the commutator of the operations ``multiplication with $u$'' and ``convolution with $\phi$'', that is,
\[
[u\cdot, \phi\ast]\xi\; =\; u\cdot(\xi \ast \phi) - (u\cdot \xi)\ast \phi.
\]
Since $ \zeta_{\sigma, q_j}$ is narrow-banded in Fourier space in the sense of \eqref{AL7.0bis},
\[
(\F\zeta_{\sigma, q_j})(q)\; =\; (\F \psi_{\sigma, q_j})(q) (\F \phi_0)(q)(\F\zeta)(q)=0\quad\mbox{for all }q\not\in B_{\sigma}(q_j),
\]
we infer from \eqref{AL7.1} that
\begin{eqnarray*}
\lefteqn{\int_{-\hat H}^{\hat H} \la |\hat\zeta_{\sigma, q_j}|^r\ra\, d\hat z}\\
&\lesssim&
\int_{-\hat H}^{\hat H} \la |\hat f_{\sigma, q_j}|^r\ra\, d\hat z\\
&&\mbox{}+ \int_{-\hat H}^{\hat H} \la |[\hat{\tilde u}\cdot, \psi_{\sigma, q_j}\ast\phi_0\ast]\hat \grad \hat\zeta|^r\ra\, d\hat z + \int_{-\hat H}^{\hat H} \la |\hat\grad\cdot\hat{\tilde u}||\hat\zeta_{\sigma, q_j}|^r\ra\, d\hat z.
\end{eqnarray*}
Applying the H\"older inequality and $|\hat\grad\cdot \hat{\tilde u}|\lesssim |\hat\grad \hat{\tilde u}|$ to the third term
\begin{eqnarray*}
\lefteqn{
\int_{-\hat H}^{\hat H} \la |\hat\grad\cdot\hat{\tilde u}||\hat\zeta_{\sigma, q_j}|^r\ra\, d\hat z}\\
&\lesssim&
\left(\int_{-\hat H}^{\hat H} \la |\hat\grad\hat{\tilde u}|^2\ra\, d\hat   z\right)^{1/2}\\
&&\mbox{}\quad\quad\times
\left(\int_{-\hat H}^{\hat H} \la|\hat\zeta_{\sigma, q_j}|^{2r/(2-r)}\ra\, d\hat z\right)^{(2-r)/(2r)}
 \left(\int_{-\hat H}^{\hat H} \la| \hat\zeta_{\sigma, q_j}|^{r}\ra\, d\hat z \right)^{(r-1)/r},
\end{eqnarray*}
the commutator estimate \eqref{AL8.1} with $p=2$ to the second term
\begin{eqnarray*}
 \lefteqn{\int_{-\hat H}^{\hat H} \la |[\hat{\tilde u}\cdot, \psi_{\sigma, q_j}\ast\phi_0\ast]\hat \grad \hat\zeta|^r\ra\, d\hat z}\\
&\lesssim&
\left(\int_{\R^d} |\grad \psi_{\sigma, q_j}\ast\phi_0||x|\, dx +\int_{\R^d}| \psi_{\sigma, q_j}\ast\phi_0|\, dx\right)^r\\
&&\mbox{}\times\left(\int_{-\hat H}^{\hat H} \la |\hat\grad\hat {\tilde u}|^2\ra\, d\hat z \right)^{r/2}\left(\int_{-\hat H}^{\hat H} \la  |\hat\zeta|^{2r/(2-r)}\ra\, d\hat z\right)^{(2-r)/2},
\end{eqnarray*}
and observing that
\begin{equation}\label{104}
\int_{\R^d} |\grad \psi_{\sigma, q_j}\ast\phi_0||x|\, dx + \int_{\R^d} | \psi_{\sigma, q_j}\ast\phi_0|\, dx\; <\; \infty,
\end{equation}
since $\psi_{\sigma, q_j}\ast\phi_0$ is a Schwartz function, the above estimate turns into
\begin{eqnarray*}
\lefteqn{\int_{-\hat H}^{\hat H} \la |\hat\zeta_{\sigma, q_j}|^r\ra\, d\hat z}\\
&\lesssim&\int_{-\hat H}^{\hat H} \la |\hat f_{\sigma, q_j}|^r\ra\, d\hat z\\
&&\mbox{} +\left(\int_{-\hat H}^{\hat H} \la |\hat\grad\hat {\tilde u}|^2\ra\, d\hat z \right)^{r/2}\left(\int_{-\hat H}^{\hat H} \la  |\hat\zeta|^{2r/(2-r)}\ra\, d\hat z\right)^{(2-r)/2}\\
&&\mbox{} +\left(\int_{-\hat H}^{\hat H} \la |\hat\grad\hat{\tilde u}|^2\ra\, d\hat   z\right)^{1/2}\\
&&\mbox{}\quad\quad\times
\left(\int_{-\hat H}^{\hat H} \la|\hat\zeta_{\sigma, q_j}|^{2r/(2-r)}\ra\, d\hat z\right)^{(2-r)/(2r)}
 \left(\int_{-\hat H}^{\hat H} \la| \hat\zeta_{\sigma, q_j}|^{r}\ra\, d\hat z \right)^{(r-1)/r}.
\end{eqnarray*}
In view of \eqref{104}, we may use the convolution estimate and Young's inequality (in the case $r>1$) to reduce this estimate to
\begin{eqnarray*}
\lefteqn{\int_{-\hat H}^{\hat H} \la |\hat\zeta_{\sigma, q_j}|^r\ra\, d\hat z}\\
&\lesssim&\int_{-\hat H}^{\hat H} \la |\hat f_{\sigma, q_j}|^r\ra\, d\hat z +\left(\int_{-\hat H}^{\hat H} \la |\hat\grad\hat {\tilde u}|^2\ra\, d\hat z \right)^{r/2}\left(\int_{-\hat H}^{\hat H} \la  |\hat\zeta|^{2r/(2-r)}\ra\, d\hat z\right)^{(2-r)/2}.
\end{eqnarray*}
By a standard covering argument, we convert the above micro-local inequality from the finite number of open balls $\{B_{\sigma}(q_j)\}_{1\le j\le J}$ to a local inequality on the annulus $\{ e^{-1}<|q|\le e\}$, so that we may replace $\hat\zeta_{\sigma, q_0} $ by $\hat\zeta_0$ and $\hat f_{\sigma, q_0}$ by $\hat f_0$. Now, we scale back to the original variables. Since \eqref{ALP2} translates into $\phi_{\ell}(x) = e^{d\ell}\phi_0(e^{\ell}x)$, it holds $\hat\zeta_0 = e^{2\ell}\zeta_{\ell}$ and $\hat f_0=f_{\ell}$, and thus
\begin{eqnarray*}
\lefteqn{e^{2r\ell}\int_{- H}^{ H} \la |\zeta_{\ell}|^r\ra\, d z}\\
&\lesssim&\int_{- H}^{ H} \la | f_{\ell}|^r\ra\, d z + \left(\int_{- H}^{ H} \la |\grad {\tilde u}|^2\ra\, d z \right)^{r/2}\left(\int_{- H}^{ H} \la  |\zeta|^{2r/(2-r)}\ra\, d z\right)^{(2-r)/2}.
\end{eqnarray*}
Since $\tilde u= \eta u$, we have
\[
\int_{-H}^H \la |\grad\tilde u|^2\ra\,dz
\;\lesssim\;
\int_{-2\delta}^{2\delta} \la |\grad u|^2\ra\,dz + \frac1{\delta^2}\int_{-2\delta}^{2\delta} \la | u|^2\ra\,dz
\;\lesssim\; 
\int_{-2\delta}^{2\delta} \la |\grad u|^2\ra\,dz.
\]
The last estimate is due to the Poincar\'e inequality which we may apply thanks to the homogeneous boundary conditions of $u$ at $z=0$. Now, \eqref{AP2.11} immediately follows from the well-know fact that
\begin{eqnarray}
\int_{-H}^H \la |\grad^s \zeta_{\ell}|^{p}\ra\, dz
&\sim&
e^{ps\ell}\int_{-H}^H \la | \zeta_{\ell}|^{p}\ra\, dz,\label{AP2.9}
\end{eqnarray}
for any $1\le p< \infty$ and any $s\in \R$, and summing over $0\le\ell\le N-1$. We display the argument for \eqref{AP2.9} for the convenience of the reader. As we will not consider applications with fractional or negative derivatives, we restrict to the case $s\in\N$. Because of \eqref{ALP1}--\eqref{ALP3}, we have $1=\sum_{\ell'=\ell-1, \ell, \ell+1} \F\phi_{\ell'}$ in the support of $\phi_{\ell}$, so that $\F \phi_{\ell} = \sum_{\ell'=\ell-1, \ell, \ell+1} \F\phi_{\ell'}\F \phi_{\ell}$. Hence $\sum_{\ell'=\ell-1, \ell, \ell+1}\phi_{\ell'}$ leaves $\phi_{\ell}$ invariant under convolution. We deduce
\[
\grad^s \zeta_{\ell}
\;=\; \grad^s \phi_{\ell} \ast \zeta
\;=\;\sum_{\ell'=\ell-1, \ell, \ell+1} \grad^s\phi_{\ell'}\ast \phi_{\ell} \ast \zeta,
\]
so that
\begin{eqnarray*}
\int_{-H}^H \la |\grad^s \zeta_{\ell}|^{p}\ra\, dz
&\lesssim& \sum_{\ell'=\ell-1, \ell, \ell+1}\int_{-H}^H \la |\grad^s\phi_{\ell'}\ast \phi_{\ell} \ast \zeta|^{p}\ra\, dz\\
&\lesssim& \sum_{\ell'=\ell-1, \ell, \ell+1} \left(\int_{\R^d}|\grad^s\phi_{\ell'}|\, dx\right)^{p}\int_{-H}^H \la | \phi_{\ell} \ast \zeta|^p \ra\, dz\\
&\stackrel{\eqref{ALP2}}{=}&\sum_{\ell'=\ell-1, \ell, \ell+1} e^{sp\ell'}\left(\int_{\R^d}|\tilde\grad^s\phi_{0}|\, d\tilde x\right)^{p}\int_{-H}^H \la |  \zeta_{\ell}|^{p}\ra\, dz\\
&\lesssim& e^{sp\ell}\int_{-H}^H \la |  \zeta_{\ell}|^{p}\ra\, dz,
\end{eqnarray*}
where we have use that
\[
\int_{\R^d}|\tilde\grad^s\phi_{0}|\, d\tilde x\;<\;\infty.
\]
For the opposite inequality, observe that because of $\zeta_{\ell} = \sum_{\ell'=\ell-1,\ell,\ell+1}\phi_{\ell'} \ast \zeta_{\ell}$, we have
\[
(\F \zeta_{\ell} )(q)
\; = \;
\sum_{\ell'=\ell-1, \ell, \ell+1} \frac{i q_j}{|q_j|^2}(\F \phi_{\ell'})(q)(\F \del_j\zeta_{\ell})(q)
\]
for any $1\le j \le d$. Thus
\[
\int_{-H}^H \la |\zeta_{\ell}|^{p}\ra\, dz
\;\lesssim\;
\left(\sum_{\ell'=\ell-1, \ell, \ell+1}\int_{\R^d}| \F^{-1}\left(\frac{i q_j}{|q_j|^2} \F \phi_{\ell'}\right)|\, dx \right)^{p}
\int_{-H}^H \la |\del_j \zeta_{\ell}|^{p}\ra\, dz.
\]
We easily compute that
\begin{eqnarray*}
\int_{\R^d} |\F^{-1} \left(\frac{iq_j}{|q_j|^2}\F\phi_{\ell}\right)|\, dx
&=& e^{(d-1)\ell} \int_{\R^d} |\F^{-1} \left(\frac{iq_j}{|q_j|^2}\F\phi_{0}\right)(e^{\ell}x)|\, dx\\
&=& e^{-\ell} \int_{\R^d} |\F^{-1} \left(\frac{iq_j}{|q_j|^2}\F\phi_{0}\right)(\tilde x)|\, d\tilde x\\
&\lesssim& e^{-\ell}.
\end{eqnarray*}
This proves \eqref{AP2.9} for $s=1$. The case $s\ge2$ follows by iteration.

\medskip

Finally, we consider the high frequency part, i.e., \eqref{AP2.4}. We show that
\begin{eqnarray}
\lefteqn{\sum_{\ell\ge N}\left(\int_{-H}^H \la |\grad^2 \zeta_{\ell}|^r\ra\, dz\right)^{1/r}}\nonumber\\
&\lesssim&\sum_{\ell\ge N}\left( \int_{-H}^H \la |f_{\ell}|^r\ra\, dz\right)^{1/r}\nonumber\\
&&\mbox{}+ e^{-N} \left(\int_{-H}^H \la|\grad u|^4\ra\, dz\right)^{1/4}\left(\int_{-H}^H \la |\grad \zeta|^{4r/(4-r)}\ra\, dz\right)^{(4-r)/4r}\label{AP2.17}.
\end{eqnarray}
Our treatment is slightly different from the one for the intermediate frequencies. Starting point is again equation \eqref{AP2.12} with $\tilde u$ replaced by $u$:
\begin{equation}\nonumber
\partial_{\hat t}\hat \zeta_{\sigma, q_j} + \hat u\cdot\hat\grad\hat\zeta_{\sigma, q_j} -\hat \laplace\hat \zeta_{\sigma, q_j}\; =\;\hat f_{\sigma, q_j} + [\hat u\cdot, \psi_{\sigma, q_j}\ast\phi_0\ast]\hat\grad \hat\zeta.
\end{equation}
Using the similar arguments as for the intermediate frequencies but with the commutator estimate \eqref{AL8.2} and $p=4$ instead of \eqref{AL8.1}, we arrive at
\begin{eqnarray*}
\lefteqn{\int_{-H}^H \la |\grad^2 \zeta_{\ell}|^r\ra\, dz}\\
&\lesssim& \int_{-H}^H \la |f_{\ell}|^r\ra\, dz\\
&&\mbox{}+ e^{-\ell r} \left(\int_{-H}^H \la|\grad u|^4\ra\, dz\right)^{r/4}\left(\int_{-H}^H \la |\grad \zeta|^{4r/(4-r)}\ra\, dz\right)^{(4-r)/4}.
\end{eqnarray*}
Notice that the analogous estimate to \eqref{AL7.1} simplifies because of \eqref{AP2.1bis}. Now, \eqref{AP2.17} follows from
\[
\sum_{\ell\ge N} e^{-\ell}
\;\lesssim\;
e^{-N}.
\]

\medskip

Compared to \eqref{AP2.0dis}, it remains to show that
\begin{eqnarray}
\lefteqn{\sum_{\ell\ge0} \left(\int_{-H}^H \la |f_{\ell}|^r\ra\, dz \right)^{1/r}}\nonumber\\
&\lesssim&
M \left(\int_{-H}^H \la |f|^r\ra\, dz \right)^{1/r}+ e^{-M} \left(\int_{-H}^H \la |\grad f|^r\ra\, dz \right)^{1/r}.\label{AP2.19}
\end{eqnarray}
We split the sum on the l.h.s.\ of \eqref{AP2.19} according to
\begin{eqnarray*}
\lefteqn{
\sum_{\ell\ge0}\left( \int_{-H}^H \la |f_{\ell}|^r\ra\, dz \right)^{1/r}}\\
&=&\sum_{0\le\ell\le M-1}\left( \int_{-H}^H \la |f_{\ell}|^r\ra\, dz \right)^{1/r}
+ \sum_{\ell\ge M }\left( \int_{-H}^H \la |f_{\ell}|^r\ra\, dz \right)^{1/r}.
\end{eqnarray*}
Thanks to the convolution estimate and
\[
\int_{\R^d} |\phi_{\ell}|\, dx\;=\; \int_{\R^d} |\phi_0|\, dx\;<\; \infty,
\]
we have for the low frequency part
\[
\sum_{0\le\ell\le M-1} \left(\int_{-H}^H \la |f_{\ell}|^r\ra\, dz \right)^{1/r}
\;\lesssim\;
M\left(\int_{-H}^H \la |f|^r\ra\, dz\right)^{1/r},
\] 
and for the high frequencies
\begin{eqnarray*}
\sum_{\ell\ge M }\left( \int_{-H}^H \la |f_{\ell}|^r\ra\, dz \right)^{1/r}
&\stackrel{\eqref{AP2.9}}{\lesssim}& \sum_{\ell\ge M }e^{-\ell} \left(\int_{-H}^H \la |\grad f_{\ell}|^r\ra\, dz\right)^{1/r} \\
&\lesssim&\sum_{\ell\ge M }e^{-\ell}\left( \int_{-H}^H \la |\grad f|^r\ra\, dz \right)^{1/r}\\
&\lesssim&
e^{-M} \left(\int_{-H}^H \la |\grad f|^r\ra\, dz\right)^{1/r}.
\end{eqnarray*}
We combine the estimates for the high and the low frequency part and obtain \eqref{AP2.19}.
\end{proof}

\section*{Acknowledgment} This work is part of my PhD thesis. I am indebted to my advisor Felix Otto for interesting and stimulating discussions on the subject of this paper. Also, I would like to thank the Max-Planck Institut f\"ur Mathematik in den Naturwissenschaften in Leipzig for support during the last year of my thesis.

\bibliography{os_lit}
\bibliographystyle{acm}
\end{document}